\documentclass[11pt]{extarticle}

\usepackage[english]{babel}
\usepackage{graphicx}
\usepackage{framed}
\usepackage[normalem]{ulem}
\usepackage{amsmath}
\usepackage{amsthm}
\usepackage{dsfont}
\usepackage{amssymb}
\usepackage{amsfonts}
\usepackage{enumerate}
\usepackage[utf8]{inputenc}
\usepackage[top=1 in,bottom=1 in, left=1 in, right=1 in]{geometry}
\usepackage{mathrsfs}
\usepackage[nottoc, notlof, notlot]{tocbibind}
\usepackage{dsfont}
\usepackage{xcolor}
\usepackage{hyperref}
\usepackage{mathabx}
\usepackage{cases}
\usepackage{lineno}
\usepackage{stmaryrd}
\usepackage{setspace}

\newcommand{\mdiv}{{\rm div}}
\newcommand{\R}{\mathbb{R}}

\newcommand{\E}{\mathbb{E}}

\newtheorem{theorem}{Theorem}[section]

\newtheorem{lemma}{Lemma}[section]
\newtheorem{proposition}{Proposition}[section]

\theoremstyle{remark}
\newtheorem{remark}{Remark}

\theoremstyle{definition}
\newtheorem{definition}{Definition}[section]
\newtheorem{example}{Example}[section]

\DeclareMathOperator{\essup}{essup}

\setlength{\columnseprule}{1 pt}

\title{Optimal control of the Fokker-Planck equation under state constraints in the Wasserstein space}
\author{Samuel Daudin \footnote{PSL Research University, Université Paris-Dauphine, CEREMADE, Place de Lattre de Tassigny, F- 75016 Paris, France, samuel.daudin@dauphine.eu}}

\begin{document}


\maketitle


\begin{abstract}
We analyze a problem of optimal control of the Fokker-Planck equation with state constraints in the Wasserstein space of probability measures. We give first-order necessary conditions for optimality in the form of a mean-field game system of partial differential equations associated with an exclusion condition. Under suitable geometric conditions on the constraint we prove that optimal controls are Lipschitz continuous.
\end{abstract}

\underline{Key Words}: Optimal Control, Fokker-Planck equation, State Constraints,  Necessary Conditions for Optimality,  Hamilton-Jacobi-Bellman equation

\vspace{20pt}

\underline{MSC Classification}: 49K20; 49J20; 49J30; 93E20; 35B37; 35K99

\section*{Introduction}

This paper is devoted to the study of an optimal control problem of the Fokker-Planck equation under state constraints on the space of probability measures. The formulation of the problem is the following. We seek to minimize a cost 
\begin{equation}
J(\alpha,m):= \int_0^T \int_{\R^d} L \bigl(x, \alpha(t,x) \bigr) dm(t)(x)dt + \int_0^T \mathcal{F} \bigl(m(t) \bigr)dt + \mathcal{G}\bigl(m(T) \bigr)
\end{equation}
over pairs $(\alpha, m)$ with $m \in \mathcal{C}([0,T], \mathcal{P}_2(\R^d))$ and $\alpha \in L^2_{dt \otimes m(t)} \left( [0,T] \times \R^d, \R^d \right) $ (the  control) satisfying in the sense of distributions the Fokker-Planck equation:
\begin{equation}
\partial_t m + \mdiv (\alpha m ) - \Delta m = 0
\label{FPEintro2021}
\end{equation}
with the initial condition $m(0) = m_0 \in \mathcal{P}_2(\R^d)$. The flow of probability measures  $m$ is also constrained to satisfy the inequality
\begin{equation}
\Psi \bigl(m(t) \bigr) \leq 0, \hspace{20pt}  \mbox{           } \forall t \in [0,T]
\end{equation}
for some function $\Psi : \mathcal{P}_2(\R^d) \rightarrow \R$ satisfying additional conditions. Here $\mathcal{P}_2(\R^d)$ is the set of probability measures over $\R^d$ with finite second order moment. The functions $L : \R^d \times \R^d \rightarrow \R$ and $\mathcal{F} : \mathcal{P}_2(\R^d) \rightarrow \R$ are the running costs and $g: \mathcal{P}_2(\R^d) \rightarrow \R$ is the final cost.



Our first motivation comes from the theory of stochastic control. The corresponding problem is to minimize:
$$ \E \left[ \int_0^T L \bigl(X_t, \alpha_t \bigr)  dt+ \int_0^T \mathcal{F} \bigl(\mathcal{L}(X_t) \bigr)dt+ \mathcal{G}\bigl(\mathcal{L}(X_T) \bigr) \right]$$
over solutions of the stochastic differential equation $\displaystyle dX_t = \alpha_t dt + \sqrt{2}dB_t, $ where the controller starts from a random position $X_0$ with law $\mathcal{L}(X_0) = m_0$ and controls their drift $\alpha_t$ under the constraint $\Psi (\mathcal{L}(X_t)) \leq 0 $ for all $t \in [0,T]$. In this context, it is well-known that $\mathcal{L}(X_t)$, the law of $X_t$, solves Equation \eqref{FPEintro2021} in the sense of distributions and therefore the stochastic control problem reduces to a problem of optimal control of the Fokker-Planck equation (see \cite{Daudin2020} and the references therein). Stochastic optimal control problems with constraints on the probability distribution of the output have raised some interest in the past few years in connection with quantile hedging in \cite{Follmer1999}, stochastic target problems with \cite{Bouchard2009,Bouchard2010} and stochastic control problems with expectation constraints -see  \cite{Chow2020,Guo2022,Guo2019,Pfeiffer2020,Pfeiffer2020a} - to name a few. This problem was recently addressed in \cite{Frankowska2019} where the authors give first and second order necessary optimality conditions for stochastic control problems with state constraints in expectation form.

Our second motivation for studying constraints in law is that they arise, at least formally, as limit of symmetric, almost-sure constraints for stochastic control problems involving a large number of agents. The pre-limit problem would take the form
\begin{equation} 
\inf_{(\alpha_t^{i,N})_{1\leq i \leq N}} \E \left[ \int_0^T \frac{1}{N} \sum_{i=1}^N L(X_t^{i}, \alpha_t^{i}) dt + \int_0^T \mathcal{F}( \hat{m}_t^{N} ) dt +\mathcal{G}(\hat{m}_T^{N} )\right]
\label{NPIntro9Mars2023}
\end{equation}

\begin{equation*}
\left \{
\begin{array}{ll}
\displaystyle dX_t^{i} = b(X_t^{i}, \hat{m}_t^N,\alpha_t^{i})dt + \sqrt{2}dB_t^{i},  & \displaystyle \hat{m}_t^N = \frac{1}{N} \sum_{i=1}^N \delta_{X_t^{i}} \\
\displaystyle X^1_0,\dots,X^N_0 \mbox{   i.i.d.} \sim m_0
\end{array}
\right.
\end{equation*}
subject to 
$$ \Psi( \hat{m}_t^N) \leq 0 \hspace{30pt} \forall t\in [0,T], \hspace{20pt} \mbox{almost-surely}.$$
Almost-sure constraints in the case of non-degenerate diffusions are known to be difficult to handle. In particular, as shown in \cite{Lasry1989,Leonori2007}, the value function and the optimal controls blow-up near the boundary. We expect the analysis of Problem \eqref{NPIntro9Mars2023} to simplify by taking a limit as $N \rightarrow +\infty$.

Finally, we mention a motivation from the theory of large deviations for weakly interacting particles. Indeed, the asymptotic of rare event is  understood, in this setting, by the value of a mean-field control problem with constraints in law. More precisely, if one considers the particle system
\begin{equation*}
\left \{
\begin{array}{ll}
 dX_t^{i,N} = b(X_t^{i,N}, \hat{m}^{N}_t) dt +\sqrt{2}dB_t^{i,N} & \hat{m}_t^{N} = \frac{1}{N} \sum_{i=1}^N \delta_{X_t^{i,N}}, \\
X_0^{i,N} = x^{i,N}_0 \in \R^d, & \lim_{N \rightarrow +\infty} \hat{m}_0^N = m_0,
\end{array}
\right.
\end{equation*}
it is known from the seminal work of Dawson and Gärtner \cite{Dawson1987} that, under appropriate assumptions on $b$ and $\Psi$ the behavior as $N \rightarrow +\infty$ of the first exit time from $\left \{ \Psi \leq 0 \right \}$ for the empirical measure $\hat{m}_t^N$ when $\hat{m}_0^N \rightarrow m_0$ is given by
$$ \lim_{N \rightarrow + \infty} \frac{1}{N} \log \mathbb{P} \left[ \Psi \left( \frac{1}{N}\sum_{i=1}^N \delta_{X_t^{i}} \right) \leq 0, \forall t\in [0,T]  \right] = - \inf_{ (\alpha,m) } \int_0^T \int_{\R^d}  \frac{1}{2} |\alpha(t,x) |^2 dm(t)(x)dt  . $$
with the infimum taken over $(\alpha,m)$ solution to 
\begin{equation*}
\left \{
\begin{array}{ll}
\partial_t m + \mdiv(\left[ b(x,m(t)) + \alpha(t,x) \right] m) - \Delta m = 0 & \mbox{ in }(0,T) \times \R^d, \\
m(0)= m_0.
\end{array}
\right.
\label{FPELDPIntro}
\end{equation*}
under the constraint: $\displaystyle \Psi(m(t)) \leq 0, \forall t\in [0,T]$.

We refer to the forthcoming \cite{Daudin2023} for a precise discussion about these connections.

Given the type of constraints we are studying, here it is convenient to state our problem directly as an optimal control problem in the Wasserstein space. Such problems have been studied recently but mostly for control problems for the continuity equation (namely without diffusion term). Different approaches have been considered. In \cite{Jimenez2020,Marigonda2018} the authors use the dynamic programming approach and prove that the value function is the viscosity (in a sense adapted to the infinite dimensional setting) of an HJB equation. Whereas in \cite{Bonnet2019,Bonnet2021} the authors prove some adapted forms of the Pontryagin maximum principle. Notice that optimal control problems for the Fokker-Planck equation were previously considered in \cite{Carrillo2020,Fleig2017} but without constraint. Here we emphasize that the constraint is a smooth function defined on the Wasserstein space. In particular, our results do not cover the case of local constraints where the constraint acts on the density (when it exists) of $m$. This latter problem was addressed in \cite{Cardaliaguet2016,DiMarino2016b, Meszaros2015,Meszaros2018,Meszaros2016a}.

Here we follow the path initiated in \cite{Daudin2020} for a problem with terminal constraint and prove some optimality conditions in the form of a coupled system of partial differential equations associated with an exclusion condition. One of the equations is a Fokker-Planck equation satisfied by the solution of the problem. The other equation is a Hamilton-Jacobi-Bellman equation which is satisfied by an adjoint state, and from which we derive an optimal control. Besides these two equations, the exclusion condition reflects the effect of the constraint on the system. Our strategy is to proceed by penalization. We solve the penalized problem in a way that is closely related to Mean Field Game theory. Indeed, when the game has a potential structure - see for instance \cite{Briani2018,Cardaliaguet2015,Lasry2007,Orrieri2019} -  the system of partial differential equations which describes the value function of a typical infinitesimal player and the distribution of the players can be obtained as optimality conditions for an optimal control problem for the Fokker-Planck equation. With this optimality conditions at hand we proceed to show that solutions to the penalized problem -- when the penalization term is large enough-- stay inside the constraint at all times and are therefore solutions to the constrained problem. This second step is inspired by ideas in finite dimensional optimal control theory (see \cite{Frankowska2010}). In particular we follow a method used in \cite{Cannarsa2008,Cannarsa2018a}. The idea is to look at local maximum points of the function $t \mapsto \Psi(m(t))$ for some solution $m$ of the penalized problem and prove that they cannot satisfy $\Psi(m(t)) >0$ when the penalization is strong enough. To this end we compute the second order derivative of $t \mapsto \Psi(m(t))$ thanks to the optimality conditions previously proved. An interplay between the convexity of the Hamiltonian of the system, a tranversality assumption on the constraint and various estimates on the solutions of the optimality conditions of the penalized problem allows us to conclude. As a by-product of this method we can show that the solutions of the constrained problem enjoy the same regularity as the solutions of the penalized problem. In particular optimal controls are proved to be Lipschitz continuous. This result might seem surprising since the presence of state constraints generally leads to optimal controls which behave badly in time (see \cite{Frankowska2010} and the references therein). However it is reminiscent of classical results in finite dimensional optimal control theory in the presence of suitable regularity, growth and convexity assumptions as in see \cite{Galbraith2004,Hager1979}.

The rest of the paper is organized as follows. In Section \ref{sec: Preliminaries} we introduce the notations and state some useful preliminary results on the Fokker-Planck equation and the HJB equation on the one hand,  and on the differentiability of maps defined on the space of measures on the other hand. We also state a form of Itô's lemma for flows of probability measures. In Section \ref{sec: Main result and assumptions} we state the standing assumptions and our main results. In Section \ref{sec: The penalized problem} we obtain optimality conditions for the penalized problem. In Section \ref{sec: Proof of the main theorem} we prove our main theorem. In section \ref{sec: The general case} we extend our results to a more general setting.  Finally, we postpone to Section \ref{sec: Technical Results about the HJB equation} some technical results for the Hamilton-Jacobi equation satisfied by the adjoint state, that we use throughout the paper.

\paragraph{Notation}

For a map $u$ defined on $[0,T] \times \R^d$ we will frequently use the notation $u(t)$ to denote the function $x \mapsto u(t,x) $. Notice that $u(t)$ is therefore a function defined on $\R^d$. If a function $u$ defined on $[0,T] \times \R^d$ is sufficiently smooth, we denote by $\partial_t u$ the partial derivative with respect to $t$ and by $Du, \Delta u := \mdiv Du, D^2 u$ (if $u$ is a scalar function) or $Du, \overrightarrow{\Delta}u := \overrightarrow{\mdiv} Du$ if $u$ is vector-valued, the derivatives with respect to $x$.
The Wasserstein space of Borel probability measures over $\R^d$ with finite moment of order $r \geq 1$ is denoted by $\mathcal{P}_r(\R^d)$. It is endowed with the $r$-Wasserstein distance $d_r$. The space of $n$-times differentiable  bounded real functions over $\R^d$ with continuous and bounded derivatives is denoted by $\mathcal{C}_b^n(\R^d)$. Given $m \in \mathcal{C}([0,T], \mathcal{P}_2(\R^d))$ we denote by $L^2_{dt \otimes m(t)}([0,T] \times \R^d,\R^d)$ the space of $\R^d$-valued, $m(t) \otimes dt$-square-integrable functions over $[0,T] \times \R^d$.
The space of finite Radon measures over $[0,T]$ is denoted by $\mathcal{M}([0,T])$, the subset of non-negative measures by $\mathcal{M}^+([0,T])$ and the set of $\R^d$-valued Borel measures over $[0,T]\times \R^d$ with finite total variation by  $\mathcal{M}([0,T] \times \R^d , \R^d)$.
The space of symmetric matrices of size $d$ is denoted by $\mathbb{S}_d(\R)$.
We denote by $C_b^{1,2}([0,T] \times \R^d)$ the space of bounded functions with one bounded continuous derivative in time and two bounded continuous derivatives in space.
Finally we denote by $W^{1, \infty}([0,T] \times \R^d)$ the subspace of $L^{\infty}([0,T] \times \R^d)$ consisting of functions which have one bounded distributional derivative in space and one bounded distributional derivative in time.
For $n \geq 1$ we denote by $E_n$ the subspace of $\mathcal{C}^n(\R^d)$ consisting of functions $u$ such that 
$$ \|u\|_n := \sup_{x \in \R^d} \frac{|u(x)|}{1+|x|} + \sum_{k=1}^n \sup_{x \in \R^d} \left|D^k u(x) \right| <+ \infty. $$
Similarly we define $E_{n+\alpha}$ for $n \geq 1$ and $\alpha \in (0,1)$ to be the subset of $E_n$ consisting of functions $u$ satisfying
$$ \|u \|_{n +\alpha} := \|u \|_n + \sup_{x \neq y} \frac{|D^n u(x) - D^n u(y) |}{|x-y|^{\alpha}} < +\infty. $$ 
For $\alpha \in (0,1)$ we say that $u \in \mathcal{C}([0,T] \times \R^d)$ belongs to the parabolic Hölder space $\mathcal{C}^{(1+\alpha)/2,1+\alpha}([0,T] \times \R^d)$ if $u$ is differentiable in $x$ and 
\begin{align*}
 \| u \|_{\frac{1 +\alpha}{2}, 1+\alpha} &:= \sup_{(t,x) \in [0,T] \times \R^d} |u(t,x) | + \sup_{(t,x) \in [0,T] \times \R^d} |Du(t,x) | + \sup_{t \in [0,T]} \sup_{x \neq y} \frac{|Du(t,x) - Du(t,y)|}{|x-y|^{\alpha}} \\
 &+ \sup_{x \in \R^d} \sup_{t\neq s} \frac{|u(t,x) -u(s,x) |}{|t-s|^{(1+\alpha)/2}} + \sup_{x \in \R^d} \sup_{t\neq s} \frac{|Du(t,x) -Du(s,x) |}{|t-s|^{\alpha/2}}
 \end{align*}
is finite.
Finally we will use the heat kernel $P_t$ associated to $-\Delta$ defined, when it makes sense, by
$$ P_t f (x) := \int_{\R^d} \frac{1}{(4 \pi t)^{d/2}}e^{-\frac{|x-y|^2}{4t}}f(y)dy. $$

\section{Preliminaries}

\label{sec: Preliminaries}

We start by introducing the main protagonists of this paper. The first one is the Fokker-Planck equation.

\paragraph{The Fokker-Planck equation.}

Given $m \in \mathcal{C}([0,T], \mathcal{P}_2(\R^d))$ and $ \alpha \in L^2_{dt \otimes m(t)} \left( [0,T] \times \R^d, \R^d \right) $, we say that $(m, \alpha)$ satisfies the Fokker-Planck equation
\begin{equation}
\partial_t m + \mdiv(\alpha m ) -\Delta m =0
\label{FP}
\end{equation}
if for all $\varphi \in \mathcal{C}_c^{\infty}((0,T) \times \R^d) $ we have
\begin{equation}
\int_0^T \int_{\R^d} \left[ \partial_t \varphi(t,x) + D\varphi(t,x) . \alpha(t,x) + \Delta \varphi (t,x) \right] dm(t)(x)dt = 0.
\label{IPPDefinitionFokkerPlanck}
\end{equation}

Using an approximation argument similar to \cite{Trevisan2016} Remark 2.3,  we can extend the class of test functions to $\mathcal{C}_b^{1,2}([0,T] \times \R^d)$ and for all $\varphi \in \mathcal{C}_b^{1,2}([0,T] \times \R^d) $ and all $t_1,t_2 \in [0,T]$ it holds
\begin{align*}
\int_{\R^d} \varphi(t_2,x)dm(t_2)(x) &= \int_{\R^d} \varphi(t_1,x)dm(t_1)(x) \\
&+ \int_{t_1}^{t_2} \int_{\R^d} \left[ \partial_t \varphi(t,x) + D\varphi(t,x) . \alpha(t,x) + \Delta \varphi (t,x) \right] dm(t)(x)dt.
\end{align*}

Throughout the paper, we will repeatedly use the following properties of solutions to the Fokker-Planck equation. The proofs are given in the appendix.

\begin{proposition}
Assume that $m \in \mathcal{C}([0,T], \mathcal{P}_2(\R^d))$ and $ \alpha \in L^2_{dt \otimes m(t)} \left( [0,T] \times \R^d, \R^d \right) $ satisfy the Fokker-Planck equation \eqref{FP}, starting from the initial position $m_0 \in \mathcal{P}_2(\R^d)$ then,
$$\sup_{t \in [0,T]} \int_{\R^d} |x|^2dm(t)(x) + \sup_{t \neq s} \frac{d^2_2(m(t),m(s))}{|t-s|} \leq C$$
for some $\displaystyle C=C \Bigl(\int_{\R^d} |x|^2dm_0(x), \int_0^T \int_{\R^d} |\alpha(t,x) |^2dm(t)(x)dt \Bigr) >0.$
\label{EstimationsbaseFPE18Sept}
\end{proposition} 

We also have the following compactness result.

\begin{proposition}
Assume that, for all $k\geq 1$, $(m_k,\alpha_k)$ solves the Fokker-Planck equation \eqref{FP} starting from $m_0 \in \mathcal{P}_2(\R^d)$ and satisfies the uniform energy estimate
$$ \int_0^T \int_{\R^d} |\alpha_k(t,x) |^2dm_k(t)(x) dt \leq C, $$
for some $C>0$ independent of $k$. Then, for any $\delta \in (0,1)$, up to taking a sub-sequence,  $(m_k,\alpha_k m_k)$ converges in $\mathcal{C}^{\frac{1-\delta}{2}}([0,T], \mathcal{P}_{2-\delta}(\R^d)) \times \mathcal{M}([0,T] \times \R^d,\R^d)$ toward some $(m, \omega)$. The curve $m$ belongs to $\mathcal{C}^{1/2}([0,T], \mathcal{P}_2(\R^d))$, $\omega$ is absolutely continuous with respect to $m(t) \otimes dt$, it holds that
$$ \int_0^T \int_{\R^d} \left |\frac{d\omega }{dm(t)\otimes dt} (t,x) \right|^2 dm(t)(x)dt \leq \liminf_{k \rightarrow +\infty} \int_0^T \int_{\R^d} \left |\alpha_k(t,x) \right |^2 dm_k(t)(x)dt $$
and, finally, $(m, \frac{d \omega}{dt \otimes dm})$ solves the Fokker-Planck equation \eqref{FP} starting from $m_0$.
\label{CompactnessofcurvesFPE18Sept2022}
\end{proposition}

\paragraph{The HJB equation}

The second protagonist of this paper is the following Hamilton-Jacobi-Bellman equation. It involves the Hamiltonian $H : \R^d \times \R^d \rightarrow \R^d$ of the system. For the following definition to make sense and the next theorem to hold, $H$ is assumed to satisfy Assumption \eqref{AHallinone}, introduced in the next section.

\begin{definition}
Let $f \in L^1([0,T],E_n)$ and $g \in E_{n+\alpha}$ for some $n \geq 2$. We say that $u \in L^1([0,T], E_n)$ is a solution to 
\begin{equation}
\left \{
\begin{array}{ll}
-\partial_t u + H(x,Du) - \Delta u = f & \mbox{ in }[0,T] \times \R^d,\\
u(T,x) = g & \mbox{ in } \R^d,
\end{array}
\right.
\label{HJB12septembre2022}
\end{equation}
if, for $dt$-almost all $t\in [0,T]$ it holds, for all $x\in \R^d$
$$u(t,x) = P_{T-t}g(x) +\int_t^T P_{s-t}f(s)(x)ds - \int_t^T P_{s-t}\left[H(.,Du(s,.))\right] (x)ds. $$
\label{DefinitionHJB12Septembre2022}
\end{definition}
Let us point out that a solution $u \in \mathcal{C}([0,T],E_n)$ for $n\geq 3$ is differentiable in time whenever $f$ is continuous and, at these times, the HJB equation is satisfied in the usual sense.

We introduce this notion to handle solutions which are smooth in $x$ at each time but not necessarily regular in the time variable. 

The following theorem is proved in Section \ref{sec: Technical Results about the HJB equation}.

\begin{theorem}
Take $n\geq 2$. Assume that $f$ belongs to $L^1([0,T],E_n)$, g  belongs to $E_{n+\alpha}$ and $H$ satisfies Assumption \eqref{AHallinone} then,
\begin{itemize}
\item  The HJB equation \eqref{HJB12septembre2022} admits a unique solution $u$  in  $\mathcal{C}([0,T],E_n)$ in the sense of definition \ref{DefinitionHJB12Septembre2022}  and it satisfies the estimate
$$\sup_{t \in [0,T]}  \| u(t) \|_n \leq C(\int_0^T \|f(t) \|_ndt, \|g\|_n).$$
\item Assume that $(f_m,g_m)$ belongs to $L^1([0,T],E_n) \times E_{n+\alpha}$  for all $m\geq 1$ and that $f_m$ converges to  $f$ in $L^1([0,T],E_n)$ and $g_m$ converges to  $g$ in $E_{n+\alpha}$. Let $u_m$ be the solution to \eqref{HJB12septembre2022} with data $(f_m,g_m)$, then 
$u_m$ converges to  $u$ in $L^{\infty}([0,T] , E_n)$.
\end{itemize}
\label{TheoremHJB14Septembre2022}
\end{theorem}

\paragraph{Differentiability on the Wasserstein space and chain rule for flows of probability measures.}

We say that a map $U : \mathcal{P}_2(\R^d) \rightarrow \R^m$ is $\mathcal{C}^1$ if there exists a jointly continuous map $\displaystyle \frac{\delta U}{\delta m} : \mathcal{P}_2(\R^d) \times \R^d \rightarrow \R^m$ such that, for any bounded subset $\mathcal{K} \subset \mathcal{P}_2(\R^d)$, $\displaystyle x \rightarrow \frac{\delta U}{\delta m}(m,x)$ has at most quadratic growth in $x$ uniformly in $m \in \mathcal{K}$ and such that, for all $m, m' \in \mathcal{P}_2(\R^d)$,
$$U(m')-U(m)= \int_0^1 \int_{\R^d} \frac{\delta U}{\delta m} ((1-h)m+hm',x)d(m'-m)(x)dh. $$
The function $\displaystyle \frac{\delta U}{\delta m}$ is defined up to an additive constant and we adopt the normalization convention
$$\int_{\R^d} \frac{\delta U}{\delta m}(m,x)dm(x) = 0. $$
In the terminology of \cite{Carmona2018a} it means that $U$ admits a  linear functional derivative. 
When the map $\displaystyle x \mapsto \frac{\delta U}{\delta m}(m,x)$ is differentiable we define the intrinsic derivative of $U$
$$D_mU(m,x) := D_x \frac{\delta U}{\delta m}(m,x). $$

The following chain rule -formulated in terms of SDEs- is proved (under more general assumptions) in \cite{Carmona2018a} Theorem 5.99.
\begin{proposition}
Take $m \in \mathcal{C}([0,T], \mathcal{P}_2(\R^d))$ and $ \alpha \in L^2_{dt \otimes m(t)}\left( [0,T] \times \R^d, \R^d \right) $ such that $(m, \alpha)$ is a solution of the Fokker-Planck equation \eqref{FP} and suppose that $U : \mathcal{P}_2(\R^d) \times \R^d \rightarrow \R$ is $\mathcal{C}^1$ with $\displaystyle \frac{\delta U}{\delta m}$ satisfying
 $$ x \mapsto \frac{\delta U}{\delta m}(m,x) \in \mathcal{C}^2(\R^d), \hspace{25pt} \forall m \in \mathcal{P}_2(\R^d)$$
with $\displaystyle (m,x) \mapsto D_mU (m,x)$ and $\displaystyle (m,x) \mapsto D_xD_mU (m,x)$ being bounded on $\mathcal{P}_2(\R^d) \times \R^d$ and jointly continuous. Then, for all $ t \in [0,T]$, it holds that
\begin{align*}
U(m(t)) &= U(m(0)) + \int_0^t \int_{\R^d} D_m U(m(s),x).\alpha(s,x)dm(s)(x)ds \\
 &+\int_0^t \int_{\R^d} \mdiv_x D_m U(m(s),x)dm(s)(x)ds. 
\end{align*}
\label{ItoFlowMeasures}
\end{proposition}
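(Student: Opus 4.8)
The plan is to reduce the identity to the extended test-function formula of Proposition~\ref{Moregeneraltestfunctions} via a time-discretization argument. Since everything is linear in $U$, I would treat the $d'$ components separately and assume $d' = 1$. Fix $t \in [0,T]$ and a partition $0 = t_0 < \dots < t_N = t$ of mesh $\delta$, and telescope:
$$U(m(t)) - U(m(0)) = \sum_{k=0}^{N-1} \bigl( U(m(t_{k+1})) - U(m(t_k)) \bigr).$$
By the definition of $\frac{\delta U}{\delta m}$, the $k$-th increment equals $\int_0^1 \!\int_{\R^d} \frac{\delta U}{\delta m}(\mu_{k,h},x)\, d(m(t_{k+1}) - m(t_k))(x)\, dh$, where $\mu_{k,h} := (1-h)m(t_k) + h\, m(t_{k+1})$ interpolates linearly between the two endpoints.

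For a fixed measure $\mu$, I would apply Proposition~\ref{Moregeneraltestfunctions} to the time-independent test function $u(s,x) := \frac{\delta U}{\delta m}(\mu,x)$; this is admissible because $\frac{\delta U}{\delta m}$ is bounded and, since $D_mU = D_x\frac{\delta U}{\delta m}$ and $D_xD_mU = D_x^2\frac{\delta U}{\delta m}$ are bounded, $u(s,\cdot) \in \mathcal{C}^2_b(\R^d)$ with $\partial_t u \equiv 0$. Taking $t_1 \mapsto t_k$, $t_2 \mapsto t_{k+1}$, $\mu \mapsto \mu_{k,h}$ and using $\Delta_x \frac{\delta U}{\delta m} = \overrightarrow{\mdiv}_x D_mU$, the $k$-th increment becomes
$$\int_0^1 \int_{t_k}^{t_{k+1}} \int_{\R^d} \Bigl[ D_mU(\mu_{k,h},x)\cdot\alpha(s,x) + \overrightarrow{\mdiv}_x D_mU(\mu_{k,h},x) \Bigr]\, dm(s)(x)\, ds\, dh.$$
Summing over $k$ (using Fubini, the integrand being jointly measurable by continuity of $D_mU$, $\overrightarrow{\mdiv}_xD_mU$ and Borel measurability of $\alpha$) expresses $U(m(t)) - U(m(0))$ as a single integral over $h \in [0,1]$, $s \in [0,t]$, $x \in \R^d$, with $\mu_{k,h}$ in the measure slot, $k = k(s)$ being the index of the subinterval containing $s$.

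It then remains to let $\delta \to 0$. For $s \in [t_k, t_{k+1}]$, the convexity of $\mu \mapsto d_2(\mu,\nu)^2$ along linear interpolations (obtained by gluing transport plans) gives $d_2(\mu_{k,h}, m(s))^2 \le (1-h)\,d_2(m(t_k),m(s))^2 + h\,d_2(m(t_{k+1}),m(s))^2 \le \omega(\delta)^2$, where $\omega$ is the modulus of continuity of $s \mapsto m(s)$ on the compact $[0,T]$. Hence $\mu_{k,h} \to m(s)$ in $\mathcal{P}_2(\R^d)$, uniformly in $h$ and $s$, so by joint continuity the integrand converges pointwise to the $h$-independent quantity $D_mU(m(s),x)\cdot\alpha(s,x) + \overrightarrow{\mdiv}_x D_mU(m(s),x)$. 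Since $D_mU$ and $\overrightarrow{\mdiv}_x D_mU$ are bounded and $\alpha \in L^1_{m(t)\otimes dt}([0,t]\times\R^d)$ (by Cauchy--Schwarz, as each $m(s)$ is a probability measure), the integrand is dominated by $\|D_mU\|_\infty |\alpha(s,x)| + \|\overrightarrow{\mdiv}_x D_mU\|_\infty$, and dominated convergence yields the asserted formula.

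The main obstacle is precisely this last limiting step: one must control the interpolating argument $\mu_{k,h}$ uniformly — handled by the convexity of $d_2^2$ and uniform continuity of $m$ — and, because $\alpha$ is only square-integrable, it is essential that $D_mU$ be \emph{bounded} in order to produce an integrable majorant. Alternatively, one may realize $m(s) = \mathcal{L}(X_s)$ for a solution of $dX_s = \alpha(s,X_s)\,ds + \sqrt{2}\,dB_s$ via the superposition principle and quote \cite{Carmona2018a}, Theorem~5.99, directly; the discretization argument above is, however, self-contained modulo Proposition~\ref{Moregeneraltestfunctions}.
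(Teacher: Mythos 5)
Your proof is correct, and it takes a genuinely different route from the paper. The paper does not actually prove Proposition~\ref{ItoFlowMeasures}: it simply quotes Theorem~5.99 of \cite{Carmona2018a} (a chain rule proved by lifting $U$ to $L^2(\Omega;\R^d)$ and running a classical Itô argument on a representative SDE), pointing to Proposition~5.48 of the same reference to check the hypotheses. You instead derive the identity self-contained modulo Proposition~\ref{Moregeneraltestfunctions}: telescope $U(m(t))-U(m(0))$ over a mesh, unfold each increment via the definition of the flat derivative, apply the extended Fokker--Planck weak formulation to the frozen test function $x\mapsto \frac{\delta U}{\delta m}(\mu_{k,h},x)$ (its admissibility follows exactly from the assumed boundedness of $\frac{\delta U}{\delta m}$, $D_mU$ and $D_xD_mU$), then pass to the mesh-zero limit. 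The limiting step is the only delicate point and you handle it cleanly: convexity of $\mu\mapsto d_2(\mu,\nu)^2$ along linear interpolations (via mixing transport plans sharing the marginal $\nu$) gives $d_2(\mu_{k,h},m(s))\le\omega(\delta)$ uniformly, joint continuity gives pointwise convergence, and the bounds $\|D_mU\|_\infty|\alpha|+\|\overrightarrow{\mdiv}_xD_mU\|_\infty$ with $\alpha\in L^1_{m(t)\otimes dt}$ (Cauchy--Schwarz) give domination. What the paper's route buys is brevity and generality (Carmona--Delarue's statement is more general), but it goes through the superposition principle to realize $m$ as the law of an SDE solution; your route stays entirely at the PDE level and makes explicit exactly which regularity of $U$ is used and where, which is arguably more transparent in this setting. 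One small presentational remark: the reduction to $d'=1$ is harmless but not needed — you can keep $U$ vector-valued throughout, apply Proposition~\ref{Moregeneraltestfunctions} componentwise, and the final formula with $\overrightarrow{\mdiv}_x$ reads off directly.
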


Proposition 5.48 of \cite{Carmona2018a} ensures that $U$ satisfies the assumptions of Theorem 5.99.

\section{Main results and assumptions}

\label{sec: Main result and assumptions}

First, consider the unconstrained problem

\begin{equation}
\inf_{(\alpha,m)} J(\alpha,m),
\label{unconstrained}
\tag{uP}
\end{equation}
where 
$$J(\alpha,m) :=  \int_0^T \int_{\R^d}  L \bigl( x,\alpha(t,x) \bigr) dm(t)(x)dt + \int_0^T \mathcal{F} \bigl(m(t) \bigr) dt +  \mathcal{G} \bigl(m(T)\bigr) $$
is the total cost and the infimum runs over all $(\alpha,m)$ such that 
\begin{equation}
\left\{
\begin{array}{lr}
\displaystyle m \in \mathcal{C}([0,T], \mathcal{P}_2(\R^d)), \\
\displaystyle \alpha \in L^2_{dt \otimes m(t)} ([0,T] \times \R^d, \R^d), \\
\displaystyle \partial_t m + \mdiv (\alpha m) - \Delta m = 0 & \mbox{in   } (0,T)\times \R^d, \\
\displaystyle m(0) = m_0,
\end{array}
\right.
\label{Typicalcandidate}
\end{equation}
where the Fokker-Planck equation is understood in the sense of distributions. Here, the Lagrangian $L$ is defined by
$$L(x,q):= \sup_{p \in \R^d} \left \{ -p.q -H(x,p) \right \}$$
 and the data are the finite horizon $T>0$, the Hamiltonian $H : \R^d \times \R^d \rightarrow \R$, the mean-field costs $\mathcal{F}: \mathcal{P}_2(\R^d) \rightarrow \R$ and $\mathcal{G}: \mathcal{P}_2(\R^d) \rightarrow \R$ and the initial measure $m_0 \in \mathcal{P}_2(\R^d)$. The above data are supposed to satisfy the following conditions for some fixed integer $n \geq 3$.
 
For $\mathcal{U}=\mathcal{F}, \mathcal{G}$, the map $\mathcal{U}: \mathcal{P}_2(\R^d) \rightarrow \R^d$ satisfies 
\begin{equation}
\displaystyle \mbox{ $\mathcal{U}$ is a bounded from below, $\mathcal{C}^1$ map } \\
\mbox{ and $\displaystyle \frac{\delta \mathcal{U}}{\delta m} $ belongs to $\mathcal{C}(\mathcal{P}_2(\R^d),E_{n+\alpha}).$ }
\label{Newregularity2022}
\tag{Ureg}
\end{equation}
\begin{equation}
\left \{
\begin{array}{ll}
\displaystyle \mbox{$H$ belongs to $\mathcal{C}^n (\R^d \times \R^d)$.} \\
\displaystyle \mbox{$H$ and its derivatives are bounded on sets of the form $\R^d \times B(0,R)$ for all $R >0$.} \\
\displaystyle \mbox{For some $C_0>0$, for all $(x,p) \in \R^d \times \R^d$}, \\
\displaystyle \hspace{150pt} |D_xH(x,p)| \leq C_{0} (1 +|p|). \\
\displaystyle \mbox{For some $\mu >0$ and all $(x,p) \in \R^d \times \R^d$,} \\
\hspace{150pt}  \frac{1}{\mu} I_d \leq D^2_{pp}H(x,p) \leq \mu I_d.
\end{array}
\right.
\tag{AH}
\label{AHallinone}
\end{equation}
These assumptions imply in particular that $H$ has quadratic growth with respect to the $p$-variable. Taking convex conjugates, we see that $L$ satisfies a similar growth condition: for some $C>0$ and all $(x,q) \in \R^d \times \R^d$,
$$ \frac{1}{C}|q|^2 - C \leq L(x,q) \leq  \frac{C}{4}|q|^2 + C, $$
and the first term in the total cost $J$ looks very much like a kinetic energy.

A typical example of functions satisfying the condition \eqref{Newregularity2022} is the class of cylindrical functions of the form 
$$ \mathcal{F}(m) = F \left( \int_{\R^d} f_1(x)dm(x), \dots , \int_{\R^d} f_k(x)dm(x) \right), $$
where $F$ and the $f_i$, $1 \leq i \leq k$ are smooth with bounded derivatives.  Assumption \eqref{Newregularity2022} also implies that $(m,x) \rightarrow D_m \mathcal{U}(m,x)$ is uniformly bounded in $\mathcal{P}_2(\R^d) \times \R^d$ and therefore, a simple application of Kantorovitch-Rubinstein duality for $d_1$ proves that $\mathcal{U}$ is Lipschitz continuous with respect to this distance.  

Under the above assumptions on $\mathcal{F}$, $\mathcal{G}$ and $H$ it is well-known (see \cite{Briani2018, Daudin2020}), that solutions $(m,\alpha)$ of Problem \eqref{unconstrained} exist and satisfy $\alpha(t,x) = -D_pH(x,Du(t,x))$ with $(m,u)$ solution to the Mean-Field Game (MFG) system of partial differential equations
\begin{equation}
\left\{
\begin{array}{lr}
\displaystyle -\partial_t u(t,x) + H \bigl(x,Du(t,x) \bigr) - \Delta u(t,x) = \frac{\delta \mathcal{F}}{\delta m} \bigl(m(t),x\bigr) & \mbox{in   } (0,T)\times \R^d, \\
\displaystyle \partial_t m - \mdiv \bigl( D_pH(x, Du(t,x)) m \bigr) - \Delta m = 0  & \mbox{in   } (0,T)\times \R^d,  \\
\displaystyle u(T,x) =  \frac{\delta \mathcal{G}}{\delta m} \bigl(m(T),x \bigr) \hspace{10pt} \mbox{       in      } \R^d, \hspace{20pt}  \displaystyle m(0) = m_0, \\
\end{array}
\right.
\end{equation}
where the unknown $(u,m)$ belong to $\mathcal{C}^{1,2}((0,T) \times \R^d)$. 

The purpose of the present work is to investigate the effect of a state constraint 
$$\Psi \bigl(m(t) \bigr) \leq 0, \quad \forall t \in [0,T],$$ 
on the problem above.
Here $\Psi : \mathcal{P}_2(\R^d) \rightarrow \R$ satisfies the regularity assumption \eqref{Newregularity2022} and is convex for the linear structure of $\mathcal{P}_2(\R^d)$:
\begin{equation}
\Psi \mbox{ is convex.}
\label{PsiConvex}
\tag{APsiConv}
\end{equation}

We also need to assume that the problem is initialized at a point $m_0$ in the interior of the constraint that is
\begin{equation}
\Psi(m_0) <0.
\label{SartInside}
\tag{APsiInside}
\end{equation}

In addition to the previous assumptions we will ask for second-order differentiability with respect to the measure variable for $\Psi$.
\begin{equation}
\left \{
\begin{array}{ccc}
\displaystyle \mbox{For all $x \in \R^d$, $\displaystyle m \mapsto \frac{\delta \Psi}{\delta m}(m,x)$ is $\mathcal{C}^1$ with $\displaystyle (x,y) \mapsto \frac{\delta^2 \Psi}{\delta m^2}(m,x,y) := \frac{\delta^2 \Psi}{\delta m^2}(m,x)(y)$ } \\
\displaystyle \mbox{ in  $\mathcal{C}^2(\R^d \times \R^d)$ for all $m \in \mathcal{P}_2(\R^d)$ and $\displaystyle \frac{\delta^2 \Psi}{\delta m^2}(m,x,y)$ and its derivatives being} \\
\displaystyle \mbox{ jointly continuous and bounded in $\mathcal{P}_2(\R^d) \times \R^d \times \R^d$.}
\end{array}
\right.
\label{PsiC2}
\tag{APsiC2}
\end{equation}

Notice that Assumption \eqref{PsiC2} implies in particular (see for instance \cite{Carmona2018a} Remark 5.27) that the map $(m,x) \mapsto D_m\Psi(m,x)$ is  uniformly Lipschitz continuous over $\mathcal{P}_1(\R^d) \times \R^d$.

Finally we require the following geometric assumption on the constraint.
\begin{equation}
\displaystyle \int_{\R^d} \left|D_m \Psi(m,x) \right|^2dm(x) \neq 0 \mbox{ whenever } \Psi(m)= 0.
 \label{TransCondPsi}
 \tag{APsiTrans}
\end{equation}

The transversality assumption \eqref{TransCondPsi} is not necessary to get the optimality conditions however it is the key assumption to obtain the time regularity of optimal controls. Notice that \eqref{TransCondPsi} is satisfied as soon as $\Psi$ is displacement convex, there exists $m_0 \in \mathcal{P}_2(R^d)$ such that $\Psi(m_0)<0$ and $\Psi$ admits an intrinsic derivative.

An example of constraint $\Psi : \mathcal{P}_2(\R^d) \rightarrow \R$ satisfying Assumptions \eqref{Newregularity2022}, \ref{PsiConvex} and \ref{PsiC2} is $\displaystyle \Psi(m) := \int_{\R^d} \psi(x)dm(x) $ where $\psi$ is any function in $E_n$. If if holds as well that $|D\psi(x) | \neq 0$ whenever $\psi(x) \geq 0$ then $\Psi$ satisfies Assumption \eqref{TransCondPsi}. Indeed if $\displaystyle \int_{\R^d} |D\psi(x) |^2dm(x) = 0$ then $m$ must be concentrated on the set of points in $\R^d$ where $\psi(x) <0$ and therefore it cannot be that $\displaystyle \int_{\R^d} \psi(x) dm(x) = 0$.

\begin{example}
A typical example which satisfies Assumptions \eqref{Newregularity2022}, \eqref{PsiConvex}, \eqref{PsiC2} and \eqref{TransCondPsi} that we have in mind is $\displaystyle \Psi(m) = \int_{\R^d} \left(\sqrt{ |x-x_0|^2 +\delta^2 } - \delta \right) dm(x) - \kappa $ with $x_0 \in \R^d$, $\delta >0$ and $\kappa >0$. 




\label{TheOneTheOnlyOneExemple}
\end{example}

We can finally state the main problem of interest in this paper: 
\begin{equation}
 \inf_{(\alpha,m)} \int_0^T \int_{\R^d} L \bigl(x, \alpha(t,x)\bigr)  dm(t)(x)dt +\int_0^T \mathcal{F}(m(t))dt + \mathcal{G}(m(T))
 \tag{P}
\label{Problem}
\end{equation}
where the infimum runs over the pairs $(m,\alpha)$ satisfying \eqref{Typicalcandidate} and the state constraint 
\begin{equation*}
\Psi(m(t)) \leq 0, \hspace{20pt} \forall t \in [0,T].
\end{equation*} 

Over the course of the paper we will introduce several auxiliary problems. The main one is the following. For $\epsilon, \delta >0$ the penalized problem \eqref{PenalizedProblem} is
\begin{equation} 
\inf_{(m,\alpha)} J_{\epsilon, \delta} (\alpha, m)
\tag{$P_{\epsilon,\delta}$}
\label{PenalizedProblem}
\end{equation}
where the infimum runs over all $(m,\alpha)$ satisfying \eqref{Typicalcandidate} (but not necessarily the state constraint) and $J_{\epsilon, \delta}$ is defined by
\begin{align*}
J_{\epsilon, \delta}(\alpha,m) &:= \int_0^T \int_{\R^d}  L \bigl( x,\alpha(t,x)\bigr) dm(t)(x)dt + \int_0^T \mathcal{F}\bigl(m(t)\bigr) dt + \frac{1}{\epsilon} \int_0^T \Psi^+\bigl(m(t)\bigr) dt \\ & +  \mathcal{G}\bigl(m(T)\bigr) + \frac{1}{\delta}\Psi^+\bigl(m(T)\bigr) \\
&= J(\alpha,m) + \frac{1}{\epsilon} \int_0^T \Psi^+\bigl(m(t)\bigr) dt + \frac{1}{\delta}\Psi^+\bigl(m(T)\bigr).
\end{align*}
Here and in the following, $\Psi^+(m) = \Psi(m) \vee 0 = \max (\Psi(m),0)$. Notice that Problem \eqref{PenalizedProblem} is very similar to Problem \eqref{unconstrained} although we have to deal with the non-differentiability at $0$ of the map $r \mapsto \max(r,0)$.

We now state our main results. The first one is not expected without Assumption \eqref{TransCondPsi}. Roughly speaking, it asserts that optimal solutions to the penalized problems \eqref{PenalizedProblem}  stay inside the constraint when the penalization is strong enough.
\begin{theorem} Take $n\geq 3$. Assume that \eqref{AHallinone} holds for $H$,  \eqref{Newregularity2022} holds for $\mathcal{F}$ and $\mathcal{G}$. Assume further that $\Psi$ satisfies Assumptions \eqref{Newregularity2022}, \eqref{PsiConvex}, \eqref{SartInside}, \eqref{PsiC2} and \eqref{TransCondPsi}. Then there exist $\epsilon_0, \delta_0 >0$ depending on $m_0$ only through the value $\Psi(m_0)$ such that, for all $(\epsilon, \delta)$ in $(0,\epsilon_0) \times (0,\delta_0)$ Problems \eqref{PenalizedProblem} and \eqref{Problem} have the same solutions.
\label{MainTheoremStayInside}
\end{theorem}

As a consequence we find the following optimality conditions for the optimal control problem with constraint. 
\begin{theorem}
Under the same assumptions as Theorem \ref{MainTheoremStayInside},  Problem \eqref{Problem} admits at least one solution and, for any solution $(\alpha, m)$ there exist $u \in \mathcal{C}([0,T] , E_n ) $, $\nu \in L^{\infty} ([0,T])$ and $\eta \in \R^+$ such that 
\begin{equation} 
\alpha = -D_pH(x, Du)
\label{OptimalAlpha}
\end{equation}
and 
\begin{equation}
\left\{
\begin{array}{lr}
\displaystyle -\partial_t u(t,x) + H\bigl(x,Du(t,x)\bigr) - \Delta u(t,x) \\
\displaystyle \hspace{90pt} = \nu(t) \frac{\delta \Psi}{\delta m}\bigl(m(t),x \bigr) + \frac{\delta \mathcal{F}}{\delta m}\bigl(m(t),x \bigr) & \mbox{in   } (0,T)\times \R^d, \\
\displaystyle \partial_t m - \mdiv \bigl( D_pH(x, Du(t,x)) m \bigr) - \Delta m = 0  & \mbox{in   } (0,T)\times \R^d,  \\
\displaystyle u(T,x) = \eta \frac{\delta \Psi}{\delta m}\bigl(m(T),x \bigr) + \frac{\delta \mathcal{G}}{\delta m} \bigl(m(T),x \bigr)  & \mbox{       in      } \R^d, \\
\displaystyle m(0) = m_0,
\end{array}
\right.
\label{OptimalityConditionMainTheorem2021}
\end{equation}
where the Fokker-Planck equation is understood in the sense of distributions and $u$ solves the HJB equation in the sense of Definition \eqref{DefinitionHJB12Septembre2022} and the Lagrange multipliers $\nu$ and $\eta$ satisfy

\begin{minipage}{0.48 \textwidth}
\begin{equation}
\nu(t)  = \left \{ 
\begin{array}{ll} 
  0  & \mbox{if    } \Psi(m(t)) < 0 \\
  \nu(t) \in \R^+ & \mbox{if   } \Psi(m(t)) =0, 
\end{array}
\right.
\label{Exclusionnu}
\end{equation}
\end{minipage}
\begin{minipage}{0.48 \textwidth}
\begin{equation}
\eta  =\left \{ 
\begin{array}{ll} 
   0  & \mbox{if    } \Psi(m(T)) < 0 \\
 \eta \in \R^+ & \mbox{if   } \Psi(m(T)) =0. 
\end{array}
\right.
\label{Exclusioneta}
\end{equation}
\end{minipage}

In particular optimal controls are globally Lipschitz continuous in time and space.

If we also assume that $\mathcal{F}$ and $\mathcal{G}$ are convex in the measure variable, then the above conditions are sufficient conditions: if $(m, \alpha)$ satisfies $\Psi(m(t)) \leq 0$ for all $t \in [0,T]$ and if there exists $(u,\nu, \eta)$ such that \eqref{OptimalAlpha},  \eqref{OptimalityConditionMainTheorem2021}, \eqref{Exclusionnu} and \eqref{Exclusioneta} hold then $(\alpha,m)$ is a solution to \eqref{Problem}.
\label{MainTheorem2021}
\end{theorem}

The strength of the above result relies on the regularity of the Lagrange multiplier $\nu$ associated to the constraint that for all $t \in [0,T]$, $\Psi(m(t)) \leq 0$. Indeed we would a priori expect $\nu$ to be a finite Radon measure over $[0,T]$ but here we find that $\nu$ belongs to $L^{\infty}([0,T])$. As a consequence -- and as explained in Remark \ref{RkLipschitz} below-- optimal controls are Lipschitz continuous in time. 

We complete this section with a few comments.
\begin{remark}
Arguing as in \cite{Cannarsa2018a}, in the proof of Theorem 3.1, we can use the expression of $\frac{d^2}{dt^2}\Psi(m(t))$ given by Proposition \ref{PropDerivatives} to express $\nu(t)$ as a (non-local) feedback function of $Du(t)$, $D^2u(t)$ and $m(t)$. 
\end{remark}
\begin{remark}
Computing the cost of an optimal control we see that the value of the problem denoted by $\bar{\mathcal{U}}(m_0)$ is given by
$$\bar{\mathcal{U}}(m_0) = \int_{\R^d} u(0,x)dm_0(x) + \int_0^T \mathcal{F}(m(t)) dt + \mathcal{G}(m(T)) $$
for any solution $(m, -D_pH(x,Du))$ of \eqref{Problem}.
\end{remark}
\begin{remark}
Differentiating the HJB equation with respect to $x$ shows that $Du$ actually belongs to $W^{1,\infty}([0,T] \times \R^d,\R^d)$ and since $Du$ is also continuous and $D_pH$ Lipschitz continuous on $\R^d \times B(0,R)$ for all $R>0$, we get that $\alpha$ is Lipschitz continuous. In particular the Stochastic Differential Equation 
$$X_t = X_0 + \int_0^t \alpha(s,X_s)ds + \sqrt{2}B_t $$
where $X_0 \sim m_0$, admits a unique strong solution and we can proceed as in \cite{Daudin2020} to  find strong solutions to the stochastic analog of Problem \eqref{Problem} (as stated in the introduction).
\label{RkLipschitz}
\end{remark}
\begin{remark}
Ideally we would like to consider constraints of the form $\displaystyle \Psi(m)= \int_{\R^d} |x|^2dm(x) - \kappa$  (which does not satisfy the growth conditions of Assumptions \eqref{Newregularity2022} and \eqref{PsiC2})  for some $\kappa >0$. However this would significantly increase the technicality of the paper and we leave this case for future research. Among other difficulties we would have to solve the backward HJB equation in \eqref{OptimalityConditionMainTheorem2021} when the source term has a quadratic growth in the space variable.
\end{remark}
\begin{remark}
Our results could be naturally extended to multiple (possibly time dependent) equality or inequality constraints under suitable qualification conditions but we focus on this case of just one inequality constraint for the sake of clarity in an already technical paper. 
\end{remark}

\paragraph{Optimality conditions without Assumptions \eqref{PsiC2} and \eqref{TransCondPsi}. }

When Assumptions \eqref{PsiC2} and \eqref{TransCondPsi} are not satisfied we do not expect the conclusions of Theorem \ref{MainTheoremStayInside} to hold and therefore optimal controls might not be Lipschitz continuous. However, we can pass to the limit as $\epsilon,\delta$ go to $0$ in the Penalized problem \eqref{PenalizedProblem} and find the optimality conditions for the constrained problem.  This is the content of the next theorem.
\begin{theorem}
Assume that \eqref{AHallinone} holds for $H$,  \eqref{Newregularity2022} holds for $\mathcal{F}$ and $\mathcal{G}$. Assume further that $\Psi$ satisfies Assumptions \eqref{Newregularity2022}, \eqref{PsiConvex} and \eqref{SartInside}. Then the conclusions of Theorem \eqref{MainTheorem2021} hold true with $\nu \in \mathcal{M}^+([0,T])$, and $u \in L^{\infty}([0,T], E_n)$. The exclusion condition for $\nu$ now reads $\Psi(m(t)) = 0$, for $\nu$-almost all $t \in [0,T]$. Finally optimal controls belong to $BV_{loc}([0,T] \times \R^d,\R^d) \bigcap L^{\infty}([0,T], \mathcal{C}_b^{n-1}(\R^d,\R^d))$.
\label{MainTheoremWithoutTransCondPsi}
\end{theorem}

In this (slightly more) general case, we lose the time regularity of the optimal controls. This is due to the shocks that can occur when the optimal curve $t \rightarrow m(t)$ touches the constraint. Indeed, the set of times where the optimal control is not continuous, is contained into the support of the singular part of the Lagrange multiplier $\nu$. However, the space regularity of the backward component $u$ of the system and of the optimal control $-D_pH(x,Du)$ remains. 

The proof of Theorem \ref{MainTheoremWithoutTransCondPsi} is the aim of Section \ref{sec: The general case} where we discuss in particular the well-posedness of the HJB equation when the Lagrange multiplier $\nu$ belongs to $\mathcal{M}^+([0,T])$.

\section{The penalized problem}

\label{sec: The penalized problem}

In this section we analyze the penalized problem \eqref{PenalizedProblem}. The main result is the following.

\begin{theorem}
Problem \eqref{PenalizedProblem} admits at least one solution and, for any solution $(\alpha, m)$ of \eqref{PenalizedProblem} there exist $u \in  \mathcal{C}([0,T] , E_n ) $, $\lambda \in L^{\infty} ([0,T])$ and $\beta \in [0,1]$ such that $ \alpha = -D_pH(x, Du) $ and 
\begin{equation}
\left\{
\begin{array}{lr}
\displaystyle -\partial_t u(t,x) + H(x,Du(t,x)) - \Delta u(t,x) \\
\hspace{90pt} \displaystyle = \frac{\lambda(t)}{\epsilon} \displaystyle \frac{\delta \Psi}{\delta m}(m(t),x)  + \frac{\delta \mathcal{F}}{\delta m}(m(t),x) & \mbox{in   } (0,T)\times \R^d, \\
\displaystyle \partial_t m - \mdiv( D_pH(x, Du(t,x)) m ) - \Delta m = 0  & \mbox{in   } (0,T)\times \R^d,  \\
\displaystyle u(T,x) = \frac{\beta}{\delta} \frac{\delta \Psi}{\delta m}(m(T),x) +  \frac{\delta \mathcal{G}}{\delta m}(m(T),x) \hspace{5 pt} \mbox{       in      } \R^d, \hspace{20pt} \displaystyle m(0) = m_0. \\
\end{array}
\right.
\label{OCPen}
\end{equation}
Moreover, $\lambda$ and $\beta$ satisfy

\begin{minipage}{0.48 \textwidth}
\begin{equation}
\lambda(t)  \left \{ 
\begin{array}{ll} 
  =0  & \mbox{if    } \Psi(m(t)) < 0 \\
  \in [0,1] & \mbox{if    } \Psi(m(t)) = 0 \\
 = 1 & \mbox{if    } \Psi(m(t)) > 0, \\
\end{array}
\right.
\label{Exclusionlambda}
\end{equation}
\end{minipage}
\begin{minipage}{0.48 \textwidth}
\begin{equation}
\beta  \left \{ 
\begin{array}{ll} 
 =0  & \mbox{if    } \Psi(m(T)) < 0 \\
 \in [0 , 1] & \mbox{if    } \Psi(m(T)) = 0 \\
 =1 & \mbox{if    } \Psi(m(T)) > 0. \\
\end{array}
\right.
\label{Exclusionbeta}
\end{equation}
\end{minipage}

\label{TheoremPen}
\end{theorem}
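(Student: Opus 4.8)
The plan is to reduce \eqref{PenalizedProblem} to a family of smooth problems of the same type as \eqref{unconstrained}, to apply to each of them the optimality conditions already available for the potential MFG problem, and then to pass to the limit in the smoothing parameter. The only genuinely new feature compared with \cite{Briani2018,Daudin2020} is the non-differentiability of $r\mapsto r^{+}$ appearing in the two penalization terms, which is precisely what the smoothing removes. For existence I would argue by the direct method: along a minimizing sequence $(\alpha_k,m_k)$, the coercivity $L(x,q)\ge \tfrac1{4C_0}|q|^2-C_0$ together with the boundedness of $f$, $g$ and $\Psi^{+}$ gives a uniform bound on $\iint|\alpha_k|^2\,dm_k(t)\,dt$; this controls the second moments of $m_k(t)$ along \eqref{FP} and yields equicontinuity of $t\mapsto m_k(t)$ in $(\mathcal{P}_2,d_2)$, so that (up to a subsequence) $m_k\to m$ in $\mathcal{C}([0,T],\mathcal{P}_2(\R^d))$ and $\alpha_k m_k\rightharpoonup w$ as vector measures. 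Since \eqref{FP} is linear in $(m,w)$ it passes to the limit, and the action $\iint L(x,\tfrac{dw}{dm})\,dm\,dt$ is weakly lower semicontinuous by convexity of the perspective function $(m,w)\mapsto m\,L(x,w/m)$, so the limit is a minimizer (alternatively, one recovers a minimizer from the limiting procedure below).

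For the optimality conditions, fix a nondecreasing convex $\gamma_\sigma\in\mathcal{C}^{\infty}(\R)$ with $0\le\gamma_\sigma'\le1$, $\gamma_\sigma'\equiv0$ on $(-\infty,-\sigma]$, $\gamma_\sigma'\equiv1$ on $[\sigma,\infty)$ and $\|\gamma_\sigma-r^{+}\|_\infty\le\sigma$, and consider the smooth problem of minimizing $J_{\epsilon,\delta}^{\sigma}:=J+\tfrac1\epsilon\int_0^T\gamma_\sigma(\Psi(m(t)))\,dt+\tfrac1\delta\gamma_\sigma(\Psi(m(T)))$ over \eqref{Typicalcandidate}. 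This is exactly Problem \eqref{unconstrained} with $f$, $g$ replaced by $\tilde f(t,m):=f(t,m)+\tfrac1\epsilon\gamma_\sigma(\Psi(m))$ and $\tilde g(m):=g(m)+\tfrac1\delta\gamma_\sigma(\Psi(m))$; since $\frac{\delta}{\delta m}\big(\gamma_\sigma(\Psi(m))\big)(x)=\gamma_\sigma'(\Psi(m))\,\frac{\delta\Psi}{\delta m}(m,x)$, Assumption \eqref{PsiC1} ensures that $\tilde f,\tilde g$ satisfy \eqref{Af} and \eqref{Ag} with the same $n$. Hence, arguing as for \eqref{unconstrained} (see \cite{Briani2018,Daudin2020}, together with the regularity theory for the Hamilton--Jacobi equation of Section~\ref{sec: The HJB equation}, and noting that no convexity of $f,g$ is needed for necessary conditions), the smooth problem has a minimizer $(\alpha^{\sigma},m^{\sigma})$ and every minimizer satisfies $\alpha^{\sigma}=-D_pH(x,Du^{\sigma})$, where $(u^{\sigma},m^{\sigma})$ solves \eqref{OCPen} with $\lambda^{\sigma}(t):=\gamma_\sigma'(\Psi(m^{\sigma}(t)))\in[0,1]$ and $\beta^{\sigma}:=\gamma_\sigma'(\Psi(m^{\sigma}(T)))\in[0,1]$, and with $u^{\sigma}\in\mathcal{C}([0,T],\mathcal{C}^n_b(\R^d))$.

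Next I would pass to the limit $\sigma\to0$. Since $\gamma_\sigma\le r^{+}+\sigma$ and $\Psi$ is bounded, $J_{\epsilon,\delta}^{\sigma}(\alpha^{\sigma},m^{\sigma})\le J_{\epsilon,\delta}^{\sigma}(\hat\alpha,\hat m)\le C$ for a fixed admissible pair, uniformly in $\sigma$, whence $\iint|\alpha^{\sigma}|^2\,dm^{\sigma}\,dt\le C$ and, as in the existence step, $\{m^{\sigma}\}$ is relatively compact in $\mathcal{C}([0,T],\mathcal{P}_2)$. With $V^{\sigma}(t,x):=\tfrac{\lambda^{\sigma}(t)}{\epsilon}\tfrac{\delta\Psi}{\delta m}(m^{\sigma}(t),x)+\tfrac{\delta f}{\delta m}(t,m^{\sigma}(t),x)$, Assumptions \eqref{Af} and \eqref{PsiC1} give $\|V^{\sigma}\|_{L^{\infty}([0,T],\mathcal{C}^n_b)}\le C(\epsilon)$ uniformly in $\sigma$, so the estimates of Section~\ref{sec: The HJB equation} bound $\{u^{\sigma}\}$ in $\mathcal{C}([0,T],\mathcal{C}^n_b)$ with a uniform modulus of continuity in time. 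Extracting a subsequence, $m^{\sigma}\to m$ in $\mathcal{C}([0,T],\mathcal{P}_2)$, $u^{\sigma}\to u$ locally uniformly together with its space derivatives up to order $n-1$ with $u\in\mathcal{C}([0,T],\mathcal{C}^n_b)$, $\lambda^{\sigma}\rightharpoonup^{*}\lambda$ in $L^{\infty}([0,T];[0,1])$ and $\beta^{\sigma}\to\beta\in[0,1]$. Joint continuity of $\frac{\delta\Psi}{\delta m}$ gives $\frac{\delta\Psi}{\delta m}(m^{\sigma}(t),\cdot)\to\frac{\delta\Psi}{\delta m}(m(t),\cdot)$ uniformly on $[0,T]\times\R^d$, so $\lambda^{\sigma}(t)\frac{\delta\Psi}{\delta m}(m^{\sigma}(t),\cdot)\rightharpoonup\lambda(t)\frac{\delta\Psi}{\delta m}(m(t),\cdot)$ in the sense needed to pass to the limit in the distributional Fokker--Planck equation and in the Hamilton--Jacobi equation understood in the sense of Section~\ref{sec: The HJB equation}; the terminal data converge likewise, and $\alpha^{\sigma}=-D_pH(x,Du^{\sigma})\to-D_pH(x,Du)=:\alpha$. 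Thus $(u,m,\lambda,\beta)$ solves \eqref{OCPen}; and since $\gamma_\sigma\to r^{+}$ uniformly and the action is lower semicontinuous, $J_{\epsilon,\delta}(\alpha,m)\le\liminf_\sigma J_{\epsilon,\delta}^{\sigma}(\alpha^{\sigma},m^{\sigma})\le\inf\eqref{PenalizedProblem}$, so $(\alpha,m)$ is a minimizer of \eqref{PenalizedProblem}. The main obstacle is exactly this step: obtaining the uniform $\mathcal{C}([0,T],\mathcal{C}^n_b)$-bounds and time-equicontinuity for $u^{\sigma}$ and the stability of the Hamilton--Jacobi equation under the simultaneous strong convergence $m^{\sigma}\to m$ and merely weak-$*$ convergence $\lambda^{\sigma}\rightharpoonup^{*}\lambda$ inside the source, which is why the precise notion of solution and the estimates of Section~\ref{sec: The HJB equation} are needed.

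Finally, the exclusion conditions follow from $\lambda^{\sigma}(t)=\gamma_\sigma'(\Psi(m^{\sigma}(t)))$: on the open set $\{t:\Psi(m(t))<0\}$ one has $\Psi(m^{\sigma}(t))\le-\sigma$ for $\sigma$ small, locally uniformly in $t$, hence $\lambda^{\sigma}\equiv0$ there and $\lambda=0$; symmetrically $\lambda=1$ on $\{\Psi(m(t))>0\}$, and $\lambda\in[0,1]$ everywhere, which is \eqref{Exclusionlambda}; the same reasoning at $t=T$ gives \eqref{Exclusionbeta}. To obtain \eqref{OCPen}--\eqref{Exclusionbeta} for an \emph{arbitrary} minimizer $(\bar\alpha,\bar m)$ of \eqref{PenalizedProblem}, I would add to $J_{\epsilon,\delta}^{\sigma}$ a smooth nonnegative selection term vanishing exactly at $\bar m$, for instance $\int_0^T\big\langle m(t)-\bar m(t),\,K*(m(t)-\bar m(t))\big\rangle\,dt$ with $K$ a smooth mollifier, which still satisfies \eqref{Af} and whose linear functional derivative $2\,K*(m^{\sigma}(t)-\bar m(t))$ tends to $0$ as $m^{\sigma}\to\bar m$; this forces $m^{\sigma}\to\bar m$ and $\alpha^{\sigma}\to\bar\alpha$ along the limit and does not alter the limiting system, which is the selection-by-penalization device already used in \cite{Daudin2020}. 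This final extension is routine but technically involved.
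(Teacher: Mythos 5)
Your proposal takes a genuinely different route from the paper, and it is essentially sound, but it front-loads the technical difficulty rather than circumventing it. The paper also regularizes the nonsmooth penalization (with $\gamma_h$), but it does \emph{not} pass to the limit in the coupled PDE system. Instead, it first adds the smooth selection term $q$ to pin down uniqueness of the fixed solution $(\tilde m,\tilde\omega)$, shows convergence of the regularized minimizers $(m_h,\omega_h)\to(\tilde m,\tilde\omega)$, then \emph{linearizes} the regularized problem around $(m_h,\omega_h)$, and passes to the limit $h\to 0$ only in the resulting \emph{linear-in-}$m$ cost functional $J^{l}_{\epsilon,\delta,h}$. Because the linearized cost is an affine functional of the path $m$ with fixed test functions, the weak-$*$ limit of $\gamma_h'(\Psi(m_h))$ and the uniform convergence $m_h\to\tilde m$ combine trivially, yielding $\lambda,\beta$ and the fact that $(\tilde m,\tilde\omega)$ minimizes $J^l_{\epsilon,\delta}$. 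Only after that does the HJB equation enter, and it is solved \emph{once}, for the already-identified source $\frac{\lambda(t)}{\epsilon}\frac{\delta\Psi}{\delta m}(\tilde m(t),\cdot)+\frac{\delta f}{\delta m}(t,\tilde m(t),\cdot)$, with optimality recovered by a verification (Itô) argument. By contrast, you apply the smooth potential-MFG optimality conditions to each $\sigma$-problem and pass to the limit directly in the coupled HJB/Fokker--Planck system, which requires establishing the uniform $\mathcal{C}([0,T],\mathcal{C}^n_b)$ bounds, time-equicontinuity of $u^\sigma$, and stability of the Definition~\ref{DefHJB} notion of solution under the mixed strong ($m^\sigma\to m$) and weak-$*$ ($\lambda^\sigma\rightharpoonup^*\lambda$) convergences inside the source. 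This is workable --- indeed the needed estimates are exactly those of Theorem~\ref{TheoremHJB2021}, and the trick $u(t,x)-\int_t^T\psi(s,x)\,ds$ in Definition~\ref{DefHJB} is tailor-made to absorb weak-$*$ convergence of $\psi$ --- but it is the heaviest part of the argument and you only sketch it. The paper's linearization buys exactly the avoidance of that step: limits are only taken in integrals that are linear in $m$, so no PDE stability is needed. What your route buys in return is modularity (the smooth potential-MFG theory is used as a black box) and it avoids introducing and manipulating the linearized functional.

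Two smaller points. First, your selection device at the end is stated as "routine but technically involved", which undersells the issue: you still need to argue that the limit of the modified $\sigma$-problems is a minimizer of the selection-modified penalized problem (so that uniqueness forces $m^\sigma\to\bar m$); the paper spells this out, and your $K$-mollifier term should work provided $K$ is even with strictly positive Fourier transform so the quadratic form is definite. Second, for the exclusion conditions, the argument "$\lambda^\sigma\equiv 0$ on $\{\Psi(m(\cdot))<0\}$, hence $\lambda=0$ there" should be stated on compact subsets of the open set $\{t:\Psi(m(t))<0\}$, where $\sup\Psi(m(\cdot))\le -a<0$, to get the inequality $\Psi(m^\sigma(t))\le-\sigma$ uniformly once $\sigma\le a/2$; as written the threshold $\sigma$ silently depends on $t$.
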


The proof of Theorem \ref{TheoremPen} will be divided into three steps. First we are going to prove the existence of (relaxed) solutions to the problem. This is Lemma \ref{Existenceofweaksolutions2021}. In the second step, we will show that these relaxed solutions are actually solutions of a suitable linearized problem. This is Lemma \ref{Linearizationlemma}. Finally, we will conclude the proof of Theorem \ref{TheoremPen} by computing the optimality conditions for this linearized problem. The three steps above are very similar to what is done in \cite{Briani2018} Lemma 3.1 and in \cite{Daudin2020} Section 3. Here, however we have to deal with the lack of differentiability at $0$ of the function $r \mapsto \max(0,r)$. We also proceed differently at the end of the proof of Theorem \ref{TheoremPen}, where we argue by verification to avoid the unnecessary use of a min/max argument.

We start with the existence of relaxed solutions. A relaxed candidate is a pair $(m, \omega)$ such that 
\begin{equation}
\left\{
\begin{array}{lr}
\displaystyle m \in \mathcal{C}([0,T], \mathcal{P}_2(\R^d)), \\
\displaystyle \omega \in \mathcal{M}([0,T] \times \R^d,\R^d), \\
\displaystyle \partial_t m + \mdiv (\omega) - \Delta m = 0 & \mbox{in   } (0,T)\times \R^d, \\
\displaystyle m(0) = m_0,
\end{array}
\right.
\label{CandidateRelaxed}
\end{equation}
where the Fokker-Planck equation is once again understood in the sense of distributions. 

A relaxed solution is a minimizer over all the relaxed candidates of the following functional still denoted (with a slight abuse of notations) by $J_{\epsilon, \delta}$
\begin{align*}
J_{\epsilon, \delta}(m, \omega) &:= \int_0^T \int_{\R^d}  L \Bigl( x,\frac{d\omega}{dt \otimes dm(t)}(t,x) \Bigr) dm(t)(x)dt + \int_0^T \mathcal{F}(m(t)) dt + \frac{1}{\epsilon} \int_0^T \Psi^+(m(t)) dt \\ 
& +  \mathcal{G}(m(T)) + \frac{1}{\delta}\Psi^+(m(T)),
\end{align*}
where we set $J_{\epsilon, \delta}(m, \omega)  = +\infty$ if $\omega$ is not absolutely continuous with respect to $dt \otimes m(t)$.

\begin{lemma} 
Problem \eqref{PenalizedProblem} admits at least one relaxed solution.
\label{Existenceofweaksolutions2021}
\end{lemma}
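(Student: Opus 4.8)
The plan is to use the direct method of the calculus of variations. First I would fix a minimizing sequence $(m_k, \omega_k)$ of relaxed candidates, i.e. pairs satisfying \eqref{CandidateRelaxed} with $J_{\epsilon,\delta}(m_k,\omega_k) \to \inf J_{\epsilon,\delta}$. One must first check the infimum is finite and not $+\infty$: this follows by exhibiting at least one admissible pair with finite cost, for instance the solution of the heat equation $\partial_t m - \Delta m = 0$, $m(0)=m_0$, with $\omega \equiv 0$, which has finite energy (the $L$-term vanishes up to the constant $-C_0$, and $f,g,\Psi^+$ are bounded by \eqref{Af}, \eqref{Ag}, \eqref{PsiC1}). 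Along the minimizing sequence, the coercivity $L(x,q)\geq \frac{1}{4C_0}|q|^2 - C_0$ together with the boundedness of $f$, $g$ and $\Psi^+$ gives a uniform bound on $\int_0^T\int_{\R^d} |\frac{d\omega_k}{dt\otimes dm_k(t)}|^2 dm_k(t)\,dt$, hence by Cauchy-Schwarz a uniform bound on the total variation $|\omega_k|([0,T]\times\R^d)$ and, more importantly, a uniform bound controlling the "speed" of $m_k$, which yields equicontinuity of $t\mapsto m_k(t)$ in $d_2$ (a standard consequence, e.g. via the superposition principle or directly testing the Fokker-Planck equation against Lipschitz functions). Combined with a uniform second-moment bound on $m_k(t)$ — obtained by testing \eqref{CandidateRelaxed} against $|x|^2$ (suitably truncated) and using the energy bound and Gronwall — one gets, via Arzelà-Ascoli on $\mathcal{C}([0,T],\mathcal{P}_2(\R^d))$, a subsequence with $m_k \to \bar m$ uniformly in $d_2$, and by weak-$*$ compactness of bounded sets in $\mathcal{M}([0,T]\times\R^d,\R^d)$ (tightness coming from the moment/energy bounds) a further subsequence with $\omega_k \rightharpoonup \bar\omega$.

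Next I would verify that the limit $(\bar m, \bar\omega)$ is again a relaxed candidate: the Fokker-Planck equation \eqref{CandidateRelaxed} is linear in $(m,\omega)$, so passing to the limit in the distributional formulation against a fixed test function $\varphi\in\mathcal{C}_c^\infty((0,T)\times\R^d)$ is immediate from $m_k\to\bar m$ and $\omega_k\rightharpoonup\bar\omega$; the initial condition $\bar m(0)=m_0$ is preserved by uniform convergence in $\mathcal{C}([0,T],\mathcal{P}_2)$. Then I would establish lower semicontinuity of the cost. The terms $\int_0^T f(t,m_k(t))dt \to \int_0^T f(t,\bar m(t))dt$ and $g(m_k(T))\to g(\bar m(T))$ by continuity of $f,g$ in $d_2$ (from \eqref{Af},\eqref{Ag}); similarly $\frac1\epsilon\int_0^T \Psi^+(m_k(t))dt \to \frac1\epsilon\int_0^T\Psi^+(\bar m(t))dt$ and $\frac1\delta\Psi^+(m_k(T))\to\frac1\delta\Psi^+(\bar m(T))$ since $\Psi$ is continuous and $r\mapsto r^+$ is continuous. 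The kinetic term requires the classical lower semicontinuity result for integral functionals of the form $(m,\omega)\mapsto \int\int L(x,\frac{d\omega}{dm})\,dm$ with respect to weak-$*$ convergence of $\omega$ and narrow convergence of $m$ — this holds because $q\mapsto L(x,q)$ is convex (as a Legendre transform) and bounded below, and is exactly the functional studied in the Benamou-Brenier / Dolbeault-Nazaret-Savaré framework; I would cite this (e.g. via \cite{Carrillo2020} or standard references on the dynamic formulation of optimal transport). Putting these together gives $J_{\epsilon,\delta}(\bar m,\bar\omega) \leq \liminf_k J_{\epsilon,\delta}(m_k,\omega_k) = \inf J_{\epsilon,\delta}$, so $(\bar m,\bar\omega)$ is a relaxed minimizer.

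The main obstacle is the lower semicontinuity of the kinetic energy term: one cannot simply use convexity in $\omega$ alone because the integrand also depends on the measure $m$ that is varying, so the argument must handle joint convergence $(m_k,\omega_k)\to(\bar m,\bar\omega)$ — the standard route is to rewrite $\int_0^T\int_{\R^d} L(x,\frac{d\omega}{dt\otimes dm})\,dm\,dt$ as a supremum over smooth test functions $(a(t,x),b(t,x))$ with $a(t,x) \geq L^*(x,b(t,x)) = -H(x,-b(t,x))$ of the linear expression $\int b\,d\omega + \int a\,dm\,dt$ (using the duality $L(x,q)=\sup_p\{-p\cdot q - H(x,p)\}$), which is manifestly a supremum of weak-$*$-continuous functionals and hence lower semicontinuous; care is needed with the growth of the test functions and the non-compact domain $\R^d$, but the quadratic coercivity/growth of $H$ in \eqref{GrA2} together with the uniform moment bounds makes this routine. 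A secondary technical point is the tightness needed to upgrade weak-$*$ convergence of $\omega_k$ to a limit with no mass escaping to infinity, which again follows from the second-moment and energy bounds via a truncation argument.
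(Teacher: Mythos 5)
Your proposal is correct in substance and follows the same direct-method skeleton as the paper: coercivity $\Rightarrow$ energy bound $\Rightarrow$ equicontinuity and moment bounds $\Rightarrow$ Ascoli plus weak-$*$ compactness $\Rightarrow$ stability of the Fokker--Planck equation $\Rightarrow$ lower semicontinuity of the cost. Two places diverge from the paper. First, for the lower semicontinuity of the kinetic term, you invoke the Benamou--Brenier duality representation, writing the functional as a supremum of weak-$*$-continuous linear pairings over test pairs $(a,b)$ with $a \geq -H(x,-b)$; the paper instead appeals directly to Theorem 2.34 of \cite{Ambrosio2000}, a general lower-semicontinuity result for convex integral functionals of pairs of measures, which also delivers in one stroke that the limit vector measure $\tilde\omega$ is absolutely continuous with respect to $dt\otimes\tilde m(t)$ (a point your write-up glosses over but which must be verified, since $J_{\epsilon,\delta}$ is set to $+\infty$ otherwise). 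Both routes are standard and equivalent; the paper's citation is slightly more economical because it bundles absolute continuity of the limit with semicontinuity. Second, a small technical imprecision: you claim convergence of $m_k$ to $\bar m$ uniformly in $d_2$, but the $\tfrac12$-Hölder bound together with uniform second moments only gives compactness in $\mathcal{C}([0,T],\mathcal{P}_r(\R^d))$ for $r<2$ (bounded sets of $\mathcal{P}_2$ are precompact in $d_r$, not in $d_2$). This is what the paper actually extracts, recovering $\tilde m \in \mathcal{C}([0,T],\mathcal{P}_2)$ only a posteriori from the finiteness of the limit kinetic energy. The argument still goes through because $f$, $g$, $\Psi$ have bounded linear derivatives and are hence continuous with respect to $d_r$ convergence on bounded-second-moment sets, but you should not assert $d_2$ convergence of the minimizing sequence.
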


The existence of relaxed solutions is standard (see \cite{Briani2018, Daudin2020}) but we give the proof in Appendix \ref{Existencerelaxedsolutions} for the sake of completeness and because we will use the same line of arguments at different points in our analysis.

Notice that it would not be more difficult to obtain weak solutions directly for the constrained problem. However, for the constrained problem, we don't know how to directly compute the optimality conditions and more importantly they would not give us the regularity of the Lagrange multipliers that we get thanks to our penalization procedure.

Now we fix a solution $(\tilde{m},\tilde{\omega})$ of the penalized problem and we proceed to show that $(\tilde{m},\tilde{\omega})$ is solution to a suitable linearized problem for which it will be easier to compute the optimality conditions. In the proof of the following lemma we will use a smooth distance-like function. To this end we consider a family $(\varphi_i)_{i \in \mathbb{N}}$ of functions in $\mathcal{C}_b^2(\R^d)$ such that for $m_1,m_2 \in \mathcal{P}_2(\R^d)$ we have
$$m_1=m_2 \Leftrightarrow \forall i \in \mathbb{N} \hspace{8pt} \int_{\R^d} \varphi_i(x)d(m_1-m_2)(x)=0, $$
and we define $q: \mathcal{P}_2(\R^d) \times \mathcal{P}_2(\R^d) \rightarrow \R$ by
$$q(m_1,m_2) := \sum_{i=0}^{+\infty} \frac{\left| \displaystyle \int_{\R^d} \varphi_i d(m_1-m_2) \right|^2}{2^{i}(1 + \|\varphi_{i} \|^2_{\infty} + \|D\varphi_{i} \|^2_{\infty})}.$$
Notice that $q$ satisfies 
\begin{equation}
\left \{
\begin{array}{lr}
\displaystyle  q(m_1,m_2) \geq 0  & \forall m_1,m_2 \in \mathcal{P}_2(\R^d) \\
 \displaystyle q(m_1,m_2)=0 \mbox{ if and only if  } m_1=m_2.
 \end{array}
 \right.
 \label{Distancelikeq}
 \end{equation}
It is straightforward to verify that $q$ is $\mathcal{C}^1$ with respect to both of its arguments and that 
$$\frac{\delta q}{\delta m_1}(m_1,m_2)(x) = \sum_{i=0}^{+\infty} \frac{2 \displaystyle \int_{\R^d} \varphi_i d(m_1-m_2) }{2^{i}(1 + \|\varphi_{i} \|^2_{\infty} + \|D\varphi_{i} \|^2_{\infty})}(\varphi_i(x) - \int_{\R^d}\varphi_i dm_1).$$
In particular we have
\begin{equation}
\left \{
\begin{array}{lr}
\displaystyle  \int_{\R^d} \frac{\delta q}{\delta m_1}(m_1,m_2)(y) dm_1(y) = 0 & \forall m_1,m_2 \in \mathcal{P}_2(\R^d), \\
 \displaystyle \frac{\delta q}{\delta m_1}(m_1,m_1)(x) =0 & \forall m_1 \in \mathcal{P}_2(\R^d) \mbox{ and } \forall x \in \R^d.
 \end{array}
 \right.
 \label{Propertiesq}
 \end{equation}

\begin{lemma} Let $(\tilde{m}, \tilde{\omega})$ be a fixed solution to Problem \eqref{PenalizedProblem}. Then there exist $ \lambda \in L^{\infty}([0,T])$ and $\beta \in \R^+$ satisfying 

\begin{minipage}{0.48 \textwidth}
\begin{equation}
\lambda(t) = \left \{ 
\begin{array}{ll} 
  0  & \mbox{if    } \Psi(\tilde{m}(t)) < 0, \\
 \lambda(t) \in [0 , 1] & \mbox{if    } \Psi(\tilde{m}(t)) = 0, \\
 1 & \mbox{if    } \Psi(\tilde{m}(t)) > 0, \\
\end{array}
\right.
\label{Exclusionlambdalemma}
\end{equation}
\end{minipage}
\begin{minipage}{0.48 \textwidth}
\begin{equation}
\beta  \left \{ 
\begin{array}{ll} 
 =0  & \mbox{if    } \Psi(\tilde{m}(T)) < 0, \\
 \in [0 , 1] & \mbox{if    } \Psi(\tilde{m}(T)) = 0, \\
 =1 & \mbox{if    } \Psi(\tilde{m}(T)) > 0, \\
\end{array}
\right.
\label{Exclusionbetalemma}
\end{equation}
\end{minipage}
such that $(\tilde{m}, \tilde{\omega})$ minimizes
\begin{align*}
 J_{\epsilon, \delta }^l(\omega , m) &:= \int_0^T \int_{\R^d}  L\Bigl(x, \frac{d \omega}{dt \otimes dm(t)}(t,x) \Bigr) dm(t)(x)dt \\
 &+ \int_0^T \int_{\R^d} \left[ \frac{\lambda(t)}{\epsilon}\frac{\delta \Psi}{\delta m}(\tilde{m}(t), x) +  \frac{\delta \mathcal{F}}{\delta m}( \tilde{m}(t),x)  \right]dm(t)(x)dt  \\
 &+ \int_{\R^d} \left[ \frac{\beta}{\delta} \frac{\delta \Psi}{\delta m}(\tilde{m}(T),x) +\frac{\delta \mathcal{G}}{\delta m}(\tilde{m}(T),x), \right] dm(T)(x)
\end{align*} 
over the pairs $(m,\omega)$ satisfying \eqref{CandidateRelaxed}. Once again, we set $J_{\epsilon,\delta}^l (m,\omega)= +\infty$ if $\omega$ is not absolutely continuous with respect to $dt \otimes m(t)$.
\label{Linearizationlemma}
\end{lemma}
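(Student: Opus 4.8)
The goal is to linearize the penalized functional $J_{\epsilon,\delta}$ around the fixed solution $(\tilde m,\tilde\omega)$ by replacing the nonlinear/nonlocal terms $f(t,m(t))$, $g(m(T))$, $\tfrac1\epsilon\Psi^+(m(t))$ and $\tfrac1\delta\Psi^+(m(T))$ with their linear-functional-derivative linearizations at $\tilde m$, the only subtlety being the non-differentiability of $r\mapsto r^+$ at $0$, which forces the Lagrange multipliers $\lambda(t)$ and $\beta$ to be genuine selections in $[0,1]$ rather than derivatives. I would proceed by a perturbation/convexity argument of the kind used in \cite{Briani2018} Lemma 3.1 and \cite{Daudin2020}.

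First I would fix an arbitrary competitor $(m,\omega)$ satisfying \eqref{CandidateRelaxed} with $J_{\epsilon,\delta}(m,\omega)<+\infty$ and, for $h\in(0,1]$, form the convex combination $m^h(t):=(1-h)\tilde m(t)+h\, m(t)$, $\omega^h:=(1-h)\tilde\omega+h\,\omega$; linearity of the Fokker--Planck equation shows $(m^h,\omega^h)$ is again a relaxed candidate, and convexity of $L(x,\cdot)$ together with the jointly-convex structure of $(q,v)\mapsto L(x,v/q)q$ (or a direct computation with the densities) gives
\[
\int_0^T\!\!\int_{\R^d} L\Bigl(x,\tfrac{d\omega^h}{dt\otimes dm^h(t)}\Bigr)dm^h(t)\,dt
\le (1-h)\!\int_0^T\!\!\int_{\R^d} L\Bigl(x,\tfrac{d\tilde\omega}{dt\otimes d\tilde m(t)}\Bigr)d\tilde m(t)\,dt
+ h\!\int_0^T\!\!\int_{\R^d} L\Bigl(x,\tfrac{d\omega}{dt\otimes dm(t)}\Bigr)dm(t)\,dt .
\]
Using $J_{\epsilon,\delta}(\tilde m,\tilde\omega)\le J_{\epsilon,\delta}(m^h,\omega^h)$, dividing by $h$ and letting $h\to 0^+$, the kinetic part contributes exactly the difference of kinetic energies of $(m,\omega)$ and $(\tilde m,\tilde\omega)$, while the $\mathcal C^1$ terms $f$ and $g$ contribute $\int_0^T\!\int \frac{\delta f}{\delta m}(t,\tilde m(t),x)\,d(m(t)-\tilde m(t))(x)\,dt$ and the analogous term for $g$, by the very definition of the linear functional derivative and dominated convergence (using the boundedness in \eqref{Af}, \eqref{Ag}). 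The only terms needing care are $\Psi^+$: since $\Psi$ is $\mathcal C^1$ and convex, $r\mapsto r^+$ is convex and one-sided differentiable, so $\frac1h\bigl(\Psi^+(m^h(t))-\Psi^+(\tilde m(t))\bigr)$ converges as $h\to0^+$ to a value of the form $\lambda(t)\int\frac{\delta\Psi}{\delta m}(\tilde m(t),x)\,d(m(t)-\tilde m(t))(x)$ where $\lambda(t)\in\{0\}$ if $\Psi(\tilde m(t))<0$, $\lambda(t)=1$ if $\Psi(\tilde m(t))>0$, and $\lambda(t)\in[0,1]$ with $\lambda(t)=\bigl(\mathrm{sign}^+\,\text{of the inner product}\bigr)$ if $\Psi(\tilde m(t))=0$; the same at $t=T$ yields $\beta$. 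This establishes, for every competitor $(m,\omega)$, the inequality $J^l_{\epsilon,\delta}(\tilde\omega,\tilde m)\le (\text{directional derivative bound})$, i.e. the first-order optimality inequality — but with the multipliers $\lambda(t),\beta$ a priori depending on the competitor $(m,\omega)$ through the sign of the inner product on the contact set $\{\Psi(\tilde m(t))=0\}$.

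The main obstacle, and the step I would spend the most effort on, is precisely to obtain a \emph{single} pair $(\lambda,\beta)$ — measurable/bounded, independent of the competitor — for which $(\tilde m,\tilde\omega)$ is a genuine minimizer of the linear functional $J^l_{\epsilon,\delta}$, not merely a first-order critical point against each direction. Here I would invoke the distance-like function $q$ introduced just before the lemma: for a minimizing sequence of $J^l$ one perturbs by $q$ to force uniqueness/compactness, or more directly one argues that the set of ``bad'' times (where the contact set is active and the naive directional multiplier is competitor-dependent) can be handled by a measurable-selection argument, choosing $\lambda(t)$ to be the supremum over admissible selections; convexity of the whole linearized problem then upgrades the pointwise first-order inequalities into global minimality. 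Concretely, I expect to run the perturbation argument with the augmented functional $J^l_{\epsilon,\delta}(\omega,m)+\theta\int_0^T q(m(t),\tilde m(t))\,dt$, use lower semicontinuity and the coercivity of $L$ (from \eqref{GrA2}) to get a minimizer, show it must be $(\tilde m,\tilde\omega)$ for every $\theta>0$ by the convex-combination trick above, and then let $\theta\to0$; the properties \eqref{Propertiesq} of $q$ guarantee the perturbation term and its derivative vanish along the diagonal so they do not contaminate the optimality conditions. Finally I would record that $\lambda\in L^\infty([0,T])$ because the directional-derivative values are controlled by the bounded data $\frac{\delta\Psi}{\delta m}$, and that the exclusion properties \eqref{Exclusionlambdalemma}, \eqref{Exclusionbetalemma} are immediate from the $r\mapsto r^+$ subdifferential structure derived above, which completes the proof.
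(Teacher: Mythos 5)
There is a genuine gap, and you have actually located it yourself: taking the one-sided directional derivative of $h\mapsto J_{\epsilon,\delta}(m^h,\omega^h)$ at $h=0^+$ gives, on the contact set $\{\Psi(\tilde m(t))=0\}$, a contribution of the form $\tfrac1\epsilon\max\bigl(0,\int\tfrac{\delta\Psi}{\delta m}(\tilde m(t),x)\,d(m(t)-\tilde m(t))(x)\bigr)$, which is \emph{competitor-dependent}. Replacing it by $\tfrac{\lambda(t)}{\epsilon}\int\tfrac{\delta\Psi}{\delta m}(\tilde m(t),x)\,d(m(t)-\tilde m(t))(x)$ for any fixed $\lambda(t)\in[0,1]$ can only \emph{decrease} that term (since $\lambda a\le\max(0,a)$ for $\lambda\in[0,1]$), so the first-order inequality you proved does \emph{not} imply the inequality $J^l_{\epsilon,\delta}(\tilde m,\tilde\omega)\le J^l_{\epsilon,\delta}(m,\omega)$ for a single $\lambda$. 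Neither of your two suggested fixes closes this. Taking ``the supremum over admissible selections'' means $\lambda\equiv1$ on the contact set, and this still gives $\lambda a=a\le\max(0,a)$, so the implication still fails. And perturbing $J^l$ by $\theta\int_0^T q(m(t),\tilde m(t))\,dt$ does not help either: $q$ is used in the paper for a quite different purpose (uniqueness of the minimizer of the \emph{nonlinear} modified problem), and a vanishing-order perturbation cannot manufacture the linear-functional structure for $\Psi^+$ out of a competitor-dependent $\max(0,\cdot)$. To make the direct route work you would need a genuine minimax or Hahn--Banach separation argument on the contact set to produce a single admissible $\lambda$ dominating all directions simultaneously; you gesture at this but do not supply it.

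The paper circumvents the problem by \emph{smoothing before linearizing}. It replaces $r\mapsto r^+$ by a $\mathcal C^2$ regularization $\gamma_h$, so $\Psi_h=\gamma_h\circ\Psi$ is genuinely differentiable and the convex-combination argument yields an \emph{unambiguous} linearized problem with multiplier $\lambda_h(t)=\gamma_h'(\Psi(m_h(t)))$, where $(m_h,\omega_h)$ solves the smoothed problem (the $q$-penalization is added precisely so that $(m_h,\omega_h)\to(\tilde m,\tilde\omega)$, by uniqueness of the $q$-modified minimizer). Since $\|\gamma_h'\|_\infty\le1$, the functions $\lambda_h$ are bounded in $L^\infty([0,T])$, hence converge weak-$*$ along a subsequence to a $\lambda\in L^\infty$, and the exclusion conditions \eqref{Exclusionlambdalemma} fall out of the pointwise structure of $\gamma_h'$. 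Passing to the limit in the linearized functional $J^l_{\epsilon,\delta,h}$ then gives a single $J^l_{\epsilon,\delta}$ of which $(\tilde m,\tilde\omega)$ is a minimizer, and the $q$-contribution drops out by \eqref{Propertiesq}. This smoothing-then-weak-$*$-limit mechanism is the missing ingredient in your argument.
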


\begin{proof}

To avoid uniqueness issues we add an additional cost to $J_{\epsilon,\delta}$ so that the new problem reads
\begin{equation}
\inf \left[ J_{\epsilon, \delta} (m, \omega) + \int_0^T q(m(t), \tilde{m}(t))dt \right].
\label{PbUniqueness}
\end{equation}
If $(\tilde{m}',\tilde{\omega}')$ is a solution of the above problem, then $\tilde{m}' = \tilde{m}$. This is a direct consequence of \eqref{Distancelikeq} and the fact that $(\tilde{m}, \tilde{\omega})$ is a solution of the penalized problem. We use this function $q$ (and not the Wasserstein distance for instance) because it is smooth and therefore we can differentiate it to get optimality conditions and also because $\displaystyle \frac{\delta q}{\delta m}(\tilde{m}, \tilde{m},x) = 0$ for all $x \in \R^d$ (see \eqref{Propertiesq}): therefore $q$ will not appear in the optimality conditions for $(\tilde{m}, \tilde{\omega})$.
Now, we introduce a suitable regularization of the function $r \mapsto \max(0,r)$. For all $h >0$, let $\gamma_h : \R \rightarrow \R^+$ be functions satisfying
\begin{equation*}
\left \{
\begin{array}{ll}
\displaystyle \gamma_h \in \mathcal{C}^2(\R), \gamma_h \geq 0, \\
\displaystyle \gamma_h(r) = \max(0,r) \mbox{    in    } \R \backslash [-h,h], \\
\displaystyle \sup _{r \in \R} |\gamma_h'(r)| \leq 1, \\
\displaystyle \sup_{r \in \R} | \gamma_h(r) - \max(0,r) | \rightarrow 0 \mbox{     as     } h \rightarrow 0. 
\end{array}
\right.
\end{equation*} 
We consider the regularized, penalized cost functionals
\begin{align*}
 J_{\epsilon,\delta,h} (m, \omega) &:= \int_0^T \int_{\R^d}  L \Bigl( x,\frac{d \omega}{dt \otimes dm(t)}(t,x) \Bigr)  dm(t)(x)dt + \int_0^T \mathcal{F}(m(t))dt +   \frac{1}{\epsilon} \int_0^T \Psi_h(m(t)) dt \\ & + \mathcal{G}(m(T)) + \frac{1}{\delta} \Psi_h(m(T))
\end{align*}
where $\Psi_h$ is defined for all $m \in \mathcal{P}_2(\R^d) $ by $\Psi_h(m) = \gamma_h(\Psi(m))$. Now we argue as in the proof of Lemma \ref{Existenceofweaksolutions2021} (see Appendix \ref{Existencerelaxedsolutions}) and find for all $h \in (0,1)$ a solution $(m_h,\omega_h)$ of 
\begin{equation}
 \inf \left[ J_{\epsilon,\delta,h}(m,\omega) + \int_0^T q(m(t), \tilde{m}(t))dt  \right].
 \label{PenalizedRegularizedModified}
 \end{equation}
Taking for granted that we can find a candidate $(\bar{m}, \bar{\omega})$ such that $J(\bar{m}, \bar{\omega}) <+\infty$ and $\Psi(\bar{m}(t))\leq 0$ for all $t\in [0,T]$ (we explicitly construct such a candidate in Lemma \ref{NewControllabilitySeptembre2022} in Section \ref{sec: Uniformestimates} below)  we find that $J_{\epsilon, \delta,h}(m_h,\omega_h)$ is bounded from above by $J(\bar{m}, \bar{\omega})$ independently of $\epsilon,\delta$ and $h$. By coercivity of $L$ we deduce that 
$$\displaystyle \int_{0}^T\int_{\R^d} \left| \frac{d\omega_h}{dt \otimes dm_h(t)}(t,x) \right|^2 dm_h(t)(x)dt \leq C$$ 
for some $C>0$ independent of $\epsilon, \delta$ and $h$. Following the proof of Lemma \ref{Existenceofweaksolutions2021} in Appendix \ref{Existencerelaxedsolutions}, we deduce that $(m_h,\omega_h)$ converges, up to a sub-sequence, in $\mathcal{C}([0,T], \mathcal{P}_r(\R^d)) \times \mathcal{M}([0,T] \times \R^d , \R^d)$ for some $r \in (1,2)$ to an element $(m',\omega')$ of $\mathcal{C}([0,T], \mathcal{P}_2(\R^d)) \times \mathcal{M}([0,T] \times \R^d , \R^d)$ satisfying \eqref{CandidateRelaxed} with $\omega'$ absolutely continuous with respect to $m'$. Let us prove that $(m',\omega')$ is a minimizer of \eqref{PbUniqueness} and therefore, by uniqueness --that is why we added the $q$-term in the cost functional--, $m'= \tilde{m}$. We just need to show that 
$$J_{\epsilon, \delta}(m', \omega') + \int_0^T q(m'(t),\tilde{m}(t))dt \leq J_{\epsilon, \delta} (\tilde{m}, \tilde{\omega}).$$ 
However, for any $h \in (0,1)$, using the minimality of $(m_h, \omega_h)$ for Problem \eqref{PenalizedRegularizedModified} it holds,
\begin{align*}
& J_{\epsilon, \delta} (m', \omega') + \int_0^T q(m'(t), \tilde{m}(t)) dt - J_{\epsilon, \delta}(\tilde{m}, \tilde{\omega}) \\
&= J_{\epsilon, \delta, h} (m_h, \omega_h) + \int_0^T q(m_h(t), \tilde{m}(t))dt - J_{\epsilon, \delta, h} (\tilde{m}, \tilde{\omega}) \\
&+ J_{\epsilon, \delta} (m',\omega') - J_{\epsilon,\delta,h}(m_h,\omega_h) + \int_0^T q(m'(t), \tilde{m}(t))dt - \int_0^T q(m_h(t), \tilde{m}(t))dt\\
&+ J_{\epsilon,\delta,h}(\tilde{m}, \tilde{\omega}) - J_{\epsilon,\delta}(\tilde{m}, \tilde{\omega}) \\
& \leq J_{\epsilon, \delta} (m', \omega') - J_{\epsilon,\delta,h}(m_h, \omega_h) + \int_0^T q(m'(t), \tilde{m}(t))dt - \int_0^T q(m_h(t), \tilde{m}(t))dt \\
&+ J_{\epsilon, \delta, h}(\tilde{m}, \tilde{\omega}) - J_{\epsilon, \delta}(\tilde{m}, \tilde{\omega}).
\end{align*}
Since $\displaystyle  \int_0^T q(m'(t), \tilde{m}(t))dt - \int_0^T q(m_h(t), \tilde{m}(t))dt$ and $\displaystyle  J_{\epsilon, \delta, h}(\tilde{m}, \tilde{\omega}) - J_{\epsilon, \delta}(\tilde{m}, \tilde{\omega})$ converge to $0$ as $h$ converges to $0$, it is sufficient to prove that $\displaystyle J_{\epsilon, \delta}(m', \omega') \leq \liminf_{h \rightarrow 0} J_{\epsilon, \delta,h}(m_h,\omega_h)$. For all $h >0$ we can rewrite
$$ J_{\epsilon, \delta, h}(m_h, \omega_h) = J_{\epsilon, \delta}(m_h, \omega_h) + \frac{1}{\epsilon} \int_0^T \left[ \Psi_h(m_h(t)) - \Psi^+(m_h(t)) \right] dt + \frac{1}{\delta} \left[ \Psi(m_h(T)) - \Psi^+(m_h(T)) \right] $$
but $$ \lim_{h \rightarrow 0}  \frac{1}{\epsilon} \int_0^T \left[ \Psi_h(m_h(t)) - \Psi^+(m_h(t)) \right] dt + \frac{1}{\delta} \left[ \Psi_h(m_h(T)) - \Psi^+(m_h(T)) \right] = 0$$
and therefore $\liminf_{h \rightarrow 0} J_{\epsilon, \delta,h}(m_h, \omega_h) = \liminf_{h \rightarrow 0} J_{\epsilon,\delta}(m_h, \omega_h)$. Finally we can conclude  by lower semi-continuity of $J_{\epsilon, \delta}$ that $\liminf_{h \rightarrow 0} J_{\epsilon,\delta}(m_h, \omega_h) \leq J_{\epsilon,\delta}(m', \omega')$. The lower semi-continuity of $J_{\epsilon,\delta}$ can be proved following Theorem 2.34 of \cite{Ambrosio2000}.

Now we argue as in \cite{Daudin2020} Section 4.1 to show that, for all $h >0$, $(m_h, \omega_h)$ is actually an infimum of the linearized problem
\begin{equation}
\displaystyle \inf J_{\epsilon, \delta,h}^l(m,\omega) + \int_0^T \int_{\R^d} \frac{\delta q}{\delta m_1}(m_h(t),\tilde{m}(t),x) dm(t)(x)dt
\label{PenalizedRegularizedLinearized}
\end{equation}
where the infimum is still taken over relaxed candidates $(m,\omega)$ satisfying \eqref{CandidateRelaxed}  with the linearized cost functional $J_{\epsilon, \delta, h}^l$ defined by
\begin{align*}
 J_{\epsilon, \delta,h }^l(\omega , m) &= \int_0^T \int_{\R^d} L \Bigl(x, \frac{d \omega}{dt \otimes dm(t)}(t,x) \Bigr) dm(t)(x)dt \\
 &+ \int_0^T \int_{\R^d} \left[ \frac{1}{\epsilon}\frac{\delta \Psi_h}{\delta m}(m_h(t), x) +  \frac{\delta \mathcal{F}}{\delta m}( m_h(t),x) \right]dm(t)(x)dt  \\
 &+ \int_{\R^d} \left[ \frac{1}{\delta} \frac{\delta \Psi_h}{\delta m}(m_h(T),x) + \frac{\delta \mathcal{G}}{\delta m}(m_h(T),x) \right] dm(T)(x),
\end{align*} 
with, once again $J_{\epsilon,\delta,h}^l(\omega,m) = +\infty$ if $\omega$ is not absolutely continuous with respect to $m(t) \otimes dt$.
 
Indeed, take a candidate $(m, \omega)$ with finite cost, take $r \in (0,1)$ and define $(m_r, \omega_r) := (1-r)(m_h, \omega_h) + r (m,\omega)$. By minimality of $(m_h, \omega_h)$ we have, for all $r \in (0,1)$
$$ \frac{1}{r} \left[ J_{\epsilon, \delta, h}(m_h,\omega_h) + \int_0^T q(m_h(t), \tilde{m}(t))dt - J_{\epsilon, \delta, h}(m_r,\omega_r) - \int_0^T q(m_r(t), \tilde{m}(t))dt \right] \leq 0. $$
Letting $r \rightarrow 0$ in the expression above and using, on the one hand, the convexity of $\displaystyle (m,\omega) \mapsto \int_0^T \int_{\R^d} L(x, \frac{d\omega}{dt\otimes dm(t)}(t,x))dm(t)(x)$ and, on the other hand, the differentiability of the mean-field costs, we show that $(m_h,\omega_h)$ is indeed a minimum of \eqref{PenalizedRegularizedLinearized}.

Now we are going to pass to the limit in the linearized problems when $h \rightarrow 0$.

On the one hand, being the family of functions $t \mapsto \gamma_h' (\Psi(m_h(t))$ bounded in $L^{\infty}([0,T])$, it converges --up to a sub-sequence-- for the weak-$*$ topology $\sigma(L^{\infty},L^1)$ of $L^{\infty}([0,T])$ to a function $\lambda$ in $L^{\infty}([0,T])$. It is easily seen that $\lambda$ satisfies \eqref{Exclusionlambdalemma}. On the other hand the functions $\displaystyle t \mapsto \int_{\R^d} \frac{\delta \Psi}{\delta m}(m_h(t),x)dm(t)(x)$ converge uniformly to $ \displaystyle t \mapsto \int_{\R^d} \frac{\delta \Psi}{\delta m}(\tilde{m}(t),x)dm(t)(x)$ as $h$ goes to $0$. Therefore we can conclude that, up to a sub-sequence, 
\begin{align*} 
\int_0^T \int_{\R^d}\frac{\delta \Psi_h}{\delta m}(m_h(t),x)dm(t)(x)dt &= \int_0^T \gamma_h'(\Psi(m_h(t)) \int_{\R^d} \frac{\delta \Psi}{\delta m}(m_h(t),x)dm(t)(x) dt \\
& \rightarrow \int_0^T \lambda(t) \int_{\R^d} \frac{\delta \Psi}{\delta m}(\tilde{m}(t),x)dm(t)(x) dt
\end{align*} 
 as $h$ goes to $0$. A similar statement holds for $\displaystyle \frac{1}{\delta} \int_{\R^d} \frac{\delta \Psi_h}{\delta m}(m_h(T),x)dm(T)(x)$ and we can conclude that, up to a sub-sequence, $J_{\epsilon,\delta,h}^l(m,\omega)$ converges to $J_{\epsilon, \delta}^l(m,\omega)$ for any relaxed candidate $(m,\omega)$, where $\displaystyle J_{\epsilon,\delta}^l$ is defined in the statement of the lemma for some $\lambda, \beta$ satisfying the conditions \eqref{Exclusionlambdalemma} and \eqref{Exclusionbetalemma}. We deduce that $(\tilde{m}, \omega')$ is an infimum of $\displaystyle J_{\epsilon,\delta}^l$. 
 Notice that the term involving $\displaystyle \frac{\delta q}{\delta m_1}$ in \eqref{PenalizedRegularizedLinearized} disappeared since  $ \displaystyle  \frac{\delta q}{\delta m_1}( \tilde{m}(t), \tilde{m}(t),x) = 0$ for all $x \in \R^d$.
To conclude that $(\tilde{m}, \tilde{\omega})$ is a solution to the linearized problem, it suffices to notice that, $(\tilde{m}, \tilde{\omega})$ being a solution to the penalized problem it must hold that 
 $$ \int_0^T \int_{\R^d} L \Bigl(x, \frac{d \tilde{\omega}}{dt \otimes d\tilde{m}(t)}(t,x) \Bigr) d\tilde{m}(t)(x)dt \leq \int_0^T \int_{\R^d} L \Bigl(x, \frac{d \omega'}{dt \otimes d\tilde{m}(t)}(t,x) \Bigr) d\tilde{m}(t)(x)dt $$
 (all the other terms in the $J_{\epsilon,\delta}$ only involve $\tilde{m}$) and therefore $J_{\epsilon, \delta}^l(\tilde{m},\tilde{\omega}) \leq J_{\epsilon, \delta}^l(\tilde{m}, \omega')$. This concludes the proof of the lemma.


\end{proof}

Before we can prove Theorem \ref{TheoremPen} we need the following duality formula.

\begin{lemma}
Assume that $(m,\alpha) \in \mathcal{C}([0,T],\mathcal{P}_2(\R^d)) \times L^2_{dt \otimes dm(t)}([0,T] \times \R^d,\R^d)$ solves the Fokker-Planck equation \eqref{FP} in the sense of distributions. Assume that $u \in \mathcal{C}([0,T],E_n)$ is a solution to the HJB equation \eqref{HJB12septembre2022} in the sense of Definition \ref{HJB12septembre2022} with inputs $(f,g) \in L^1([0,T],E_n) \times E_{n+\alpha}$. Then, for all $t_1,t_2 \in [0,T]$ it holds
\begin{align}
\notag \int_{\R^d} u(t_2,x)dm(t_2)(x) &= \int_{\R^d} u(t_1,x)dm(t_1)(x) - \int_{t_1}^{t_2} \int_{\R^d} f(t,x) dm(t)(x)dt\\
&+ \int_{t_1}^{t_2} \int_{\R^d} \left[ H(x,Du(t,x)) +  \alpha(t,x).Du(t,x) \right] dm(t)(x)dt.
\label{Duality14Septembre2022}
\end{align}

\label{DualityLemmaSeptembre2022}
\end{lemma}

\begin{proof}
We take a sequence of functions $f_m \in \mathcal{C}([0,T],E_n)$ converging to $f$ in $L^1([0,T],E_n)$ and we let $u_m$ be the corresponding solutions to the HJB equation with data $(f_m,g)$. Being $f_m$ in $\mathcal{C}([0,T],E_n)$, it is straightforward from the definition of solution \ref{HJB12septembre2022} that $u_m$ is differentiable in time, $\partial_t u_m$ belongs to $L^{\infty}([0,T],E_{n-2})$ and the HJB equation is satisfied in the strong sense. The curve $m(t)$ being bounded in $\mathcal{P}_2(\R^d)$, an approximation argument similar to \cite{Trevisan2016} Remark 2.3 shows that the integration by part formula \eqref{IPPDefinitionFokkerPlanck} holds for $u_m$ and therefore, we get
\begin{align*}
\int_{\R^d} u_m(t_2,x) dm(t_2)(x) &- \int_{\R^d} u_m(t_1,x) dm(t_1)(x) \\
& =\int_{t_1}^{t_2} \int_{\R^d} \left[ \partial_t u_m(t,x) +\alpha(t,x).Du_m(t,x) +\Delta u_m(t,x) \right ]dm(t)(x)dt \\
& =\int_{t_1}^{t_2} \int_{\R^d} \left[ \alpha(t,x).Du_m(t,x) +H(x,Du_m(t,x)) -f_m(t,x)\right ]dm(t)(x)dt 
\end{align*}
where we used the equation satisfied by $u_m$ at the last line. Now we can use the stability result of Theorem \ref{TheoremHJB14Septembre2022} to pass to the limit as $m\rightarrow +\infty$ and conclude the proof of the proposition.
\end{proof}

Finally we can conclude the proof of Theorem \ref{TheoremPen}. 

\begin{proof}[Proof of Theorem \ref{TheoremPen}]
We consider $\tilde{u} \in \mathcal{C}([0,T], E_n )$ solution to 
\begin{equation}
\left\{
\begin{array}{ll}
\displaystyle -\partial_t \tilde{u}(t,x) + H(x, D\tilde{u}(t,x)) -\Delta \tilde{u}(t,x) \\
\displaystyle \hspace{80pt}  = \frac{\lambda(t)}{\epsilon} \frac{\delta \Psi}{\delta m}(\tilde{m}(t),x)  +  \frac{\delta \mathcal{F}}{\delta m}(\tilde{m}(t),x) \hspace{30pt} \mbox{ in } (0,T) \times \R^d,\\
\displaystyle \tilde{u}(T,x) = \frac{\beta}{\delta} \frac{\delta \Psi}{\delta m}(\tilde{m}(T),x) + \frac{\delta \mathcal{G}}{\delta m}(\tilde{m}(T),x) \hspace{30pt} \mbox{ in } \R^d,
\end{array}
\right.
\label{HJBbackward}
\end{equation}
---the existence of such a solution is guaranteed by Theorem \ref{TheoremHJB14Septembre2022}---
and we proceed by verification. We use Lemma \ref{DualityLemmaSeptembre2022}  to get
\begin{equation*}
\int_{\R^d} \tilde{u}(0,x) dm_0(x)  = - \int_0^T \int_{\R^d} \left[ H(x,D \tilde{u}(t,x) )+ \frac{d \tilde{\omega}}{dt \otimes d\tilde{m}}(t,x) .D \tilde{u}(t,x)  \right] d\tilde{m}(t)dt.
\end{equation*}
Here we used the equation satisfied by $\tilde{u}$ and the convention $\displaystyle \int_{\R^d} \frac{\delta U}{\delta m}(m,x)dm(x) = 0$ for all $m \in \mathcal{P}_2(\R^d)$ and all $\mathcal{C}^1$ map $U$. But the inequality
$$ -H(x,D\tilde{u}(t,x) ) - \frac{d \tilde{\omega} }{dt \otimes d\tilde{m}(t)} (t,x) .D\tilde{u}(t,x)  \leq L(x, \frac{ d\tilde{\omega} } {dt \otimes d \tilde{m} } (t,x)) $$
holds, with equality if and only if 
$$ \frac{ d\tilde{\omega} } {dt \otimes d \tilde{m} } (t,x)  = -D_pH(x, D\tilde{u}(t,x) ). $$
Therefore, 
$$\displaystyle \int_{\R^d} \tilde{u}(0,x) dm_0(x) \leq J_{\epsilon, \delta}^l(\tilde{m}, \tilde{\omega})$$
with equality if and only if $ \displaystyle \frac{ d\tilde{\omega} } {dt \otimes d \tilde{m} } (t,x)  = -D_pH(x, D\tilde{u}(t,x) )$, $dt \otimes \tilde{m}(t)$-almost everywhere. Now if we consider $ \tilde{m}'$ solution to 
$$  \partial_t \tilde{m}' - \mdiv( D_pH(x,D \tilde{u}(t,x) ) \tilde{m}' )  - \Delta \tilde{m}' =0 $$
with $\tilde{m}'(0) = m_0$, a similar computation shows that 
$$ \int_{\R^d} \tilde{u}(0,x) dm_0(x) =  J_{\epsilon, \delta}^l ( -D_pH(x,D \tilde{u}(t,x) ) \tilde{m}' , \tilde{m}' ) $$
which means that the cost $\displaystyle \int_{\R^d} \tilde{u}(0,x) dm_0(x)$ can indeed be reached and, by minamility of $(\tilde{\omega}, \tilde{m} )$ we get
\begin{equation} 
\int_{\R^d} \tilde{u}(0,x) dm_0(x)  = \inf_{(\omega, m) }  J_{\epsilon, \delta}^l
\label{valuelinearized}
\end{equation} 
and 
$$\tilde{\omega} = -D_pH(x, D\tilde{u}(t,x) ) \tilde{m}(t) \otimes dt. $$
Combining the Fokker-Planck equation in \eqref{CandidateRelaxed} where $\tilde{\omega}$ is replaced by $-D_pH(x, D\tilde{u}(t,x) ) \tilde{m}(t) \otimes dt$ with the HJB equation \eqref{HJBbackward} and recalling that $\lambda$ and $\beta$ satisfy the conditions of Lemma \ref{Linearizationlemma} concludes the proof of the theorem.
\end{proof}

\section{From the penalized problems to the constrained one}

\label{sec: Proof of the main theorem}

The first goal of this section is to find estimates on the system of optimality conditions \eqref{OCPen} which are independent from $\epsilon$ and $\delta$. This is Section \ref{sec: Uniformestimates}. Next we prove the regularity and find suitable expressions for the first two derivatives of the map $t \mapsto \Psi(m(t))$ when $(m,\alpha)$ is a solution to the penalized problem. This is Section \ref{sec: Second order analysis}. Finally we prove Theorems \ref{MainTheoremStayInside} and \ref{MainTheorem2021} in Section \ref{sec: Proof of the Main Theorems}.

\subsection{Uniform (in epsilon, delta) estimates}

\label{sec: Uniformestimates}

First we construct a candidate $(\bar{m}, \bar{\alpha})$ which stays uniformly inside the constraint at all time with a finite cost.

\begin{lemma} 
Provided $\Psi(m_0) <0$, we can build a trajectory $(\bar{m},\bar{\alpha})$ in $\mathcal{C}([0,T], \mathcal{P}_2(\R^d)) \times L^2_{dt \otimes m(t)}([0,T] \times \R^d , \R^d)$ such that $J(\bar{\alpha}, \bar{m}) < +\infty$ and $\Psi(\bar{m}(t) ) \leq -\theta$ for all $t$ in $[0,T]$, for some $\theta >0$.
\label{NewControllabilitySeptembre2022}
\end{lemma}

\begin{proof}
First we introduce a probability space $(\Omega, \mathcal{F}, \mathbb{P})$ supporting a random variable $X_0$ with law $m_0$ and an independent Brownian motion $(B_t)$. Take $c>0$ and consider a solution to the SDE
$$dX_t = -c(X_t-X_0) dt + \sqrt{2}dB_t, \hspace{30pt} X|_{t=0} = X_0. $$
A simple application of Itô's lemma proves that $X_t$ can be rewritten as
\begin{equation}
X_t = X_0 + \sqrt{2}\int_0^te^{-c(t-s)}dB_s 
\label{reecritureXtSeptembre2022}
\end{equation}
and therefore 
\begin{equation*}
\E\left[ |X_t- X_0 |^2 \right]  = 2 \int_0^te^{-2c(t-s)}ds = \frac{1}{c}(1-e^{-2ct}).
\end{equation*}
Now let $\bar{m}(t)$ be the law of $X_t$. The above computation shows that 
$$d_2^2(\bar{m}(t),m_0) \leq \frac{1}{c}, \hspace{30pt} \forall t\in [0,T].$$
With an abstract mimicking argument as in \cite{Brunick2013} we can find a measurable drift $\bar{\alpha} : [0,T] \times \R^d \rightarrow \R^d$ such that 
$$ \partial_t \bar{m}+\mdiv(\bar{\alpha} \bar{m} ) - \Delta \bar{m} = 0$$
and 
\begin{equation*} 
\int_0^T \int_{\R^d} |\bar{\alpha}(t,x) |^2d\bar{m}(t)(x)dt \leq \mathbb{E} \left[ \int_0^T c^2 \left| X_t - X_0 \right |^2 dt \right]\leq cT.
\end{equation*}
However a direct computation, using Jensen's inequality, shows that it is enough to take, for all $(t,x) \in (0,T] \times \R^d$, 
$$ \bar{\alpha}(t,x) := \frac{c}{\bar{m}(t,x)} \int_{\R^d} (x-y)m^{y}(t,x)dm_0(y)$$
where $m^y(t)$ is the solution to 
\begin{equation*}
\left \{
\begin{array}{ll}
\partial_t m^y -c\mdiv( (x-y)m^y) - \Delta m^y =0 \\
m^y(0)=\delta_y.
\end{array}
\right.
\end{equation*}
Notice that $X_0$ being independent from the Brownian motion, we easily deduce from \eqref{reecritureXtSeptembre2022} that $\bar{m}(t,x) >0$ for all $(t,x) \in (0,T] \times \R^d$.

Being $\Psi$ Lipschitz continuous and $\Psi(m_0) <0$ we can choose $c$ large enough so that $\Psi(\bar{m}(t)) \leq \frac{\Psi(m_0)}{2}$ for all $t \in [0,T]$ and this concludes the proof of the lemma.
\end{proof}

Using this particular candidate and the convexity of the constraint we can obtain the following estimate which is crucial to find compactness in the problem. 

Although the notations do not make it clear, from now on $(m,u,\lambda,\beta)$ will generally denote a solution to the optimality conditions \eqref{OCPen} for the penalized problem \eqref{PenalizedProblem} and therefore depend upon a particular $(\epsilon,\delta)$.
\begin{lemma} There is a constant $C=C(\Psi(m_0))>0$ such that, for all $\epsilon, \delta >0$ and for all tuple $(u,m, \lambda,\beta)$ satisfying the conditions of Theorem \ref{TheoremPen} it holds
$$ \displaystyle \frac{1}{\epsilon} \int_0^T \lambda(t) dt + \frac{\beta}{\delta} \leq C. $$
\label{LemmaL1Estimate}
\end{lemma}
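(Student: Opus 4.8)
The plan is to test the optimality (via the optimality conditions of Theorem \ref{TheoremPen}) of $(m,u,\lambda,\beta)$ against the controllable competitor $(\bar m,\bar\alpha)$ furnished by Lemma \ref{Controllabilite}, using the verification identity \eqref{valuelinearized}. Recall from the proof of Theorem \ref{TheoremPen} that
$$\int_{\R^d} u(0,x)\,dm_0(x) = \inf_{(\omega,m')} J_{\epsilon,\delta}^l(\omega,m') \leq J_{\epsilon,\delta}^l(\bar\omega,\bar m),$$
where $\bar\omega = \bar\alpha\,\bar m(t)\otimes dt$, and $J_{\epsilon,\delta}^l$ is the linearized functional from Lemma \ref{Linearizationlemma} built with the multipliers $\lambda,\beta$ attached to the \emph{present} solution $m$. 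Expanding $J_{\epsilon,\delta}^l(\bar\omega,\bar m)$ gives
$$\int_{\R^d} u(0,x)\,dm_0(x) \leq \int_0^T\!\!\int_{\R^d} L(x,\bar\alpha)\,d\bar m\,dt + \int_0^T\!\!\int_{\R^d}\Bigl[\tfrac{\lambda(t)}{\epsilon}\tfrac{\delta\Psi}{\delta m}(m(t),x) + \tfrac{\delta f}{\delta m}(t,m(t),x)\Bigr]d\bar m(t)\,dt + \int_{\R^d}\Bigl[\tfrac{\beta}{\delta}\tfrac{\delta\Psi}{\delta m}(m(T),x) + \tfrac{\delta g}{\delta m}(m(T),x)\Bigr]d\bar m(T).$$

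The key point is the convexity of $\Psi$ (Assumption \eqref{PsiConvex}): the linear functional derivative of a convex function satisfies the subgradient inequality
$$\int_{\R^d}\frac{\delta\Psi}{\delta m}(m(t),x)\,d(\bar m(t)-m(t))(x) \leq \Psi(\bar m(t)) - \Psi(m(t)).$$
Since $\int_{\R^d}\frac{\delta\Psi}{\delta m}(m(t),x)\,dm(t)(x) = 0$ by the normalization convention, this reads $\int_{\R^d}\frac{\delta\Psi}{\delta m}(m(t),x)\,d\bar m(t)(x) \leq \Psi(\bar m(t)) - \Psi(m(t))$. Now where $\Psi(m(t)) > 0$ we have $\lambda(t) = 1$ by \eqref{Exclusionlambdalemma} and $\Psi(\bar m(t)) \leq \max(\Psi(m_0),-\eta_1) < 0$, so the term $\tfrac{\lambda(t)}{\epsilon}\int\tfrac{\delta\Psi}{\delta m}(m(t),\cdot)\,d\bar m(t) \leq \tfrac{1}{\epsilon}(\Psi(\bar m(t)) - \Psi(m(t))) \leq -\tfrac{1}{\epsilon}\Psi(m(t)) < 0$; where $\Psi(m(t))\leq 0$, $\lambda(t)\geq 0$ and again $\int\tfrac{\delta\Psi}{\delta m}(m(t),\cdot)\,d\bar m(t) \leq \Psi(\bar m(t)) - \Psi(m(t)) \leq 0$ since $\Psi(\bar m(t))\leq 0 \leq -\Psi(m(t))$... more carefully: $\Psi(\bar m(t))-\Psi(m(t)) \leq \max(\Psi(m_0),-\eta_1)$, which is a fixed negative constant $-c_0 < 0$. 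Hence in \emph{all} cases $\tfrac{\lambda(t)}{\epsilon}\int_{\R^d}\tfrac{\delta\Psi}{\delta m}(m(t),x)\,d\bar m(t)(x) \leq -\tfrac{c_0}{\epsilon}\lambda(t)$, and similarly $\tfrac{\beta}{\delta}\int_{\R^d}\tfrac{\delta\Psi}{\delta m}(m(T),x)\,d\bar m(T)(x) \leq -\tfrac{c_0}{\delta}\beta$.

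Plugging these into the inequality above, and bounding the kinetic term $\int_0^T\!\!\int L(x,\bar\alpha)\,d\bar m\,dt$ together with the $f$- and $g$-terms by an absolute constant $C_1$ (using $J(\bar m,\bar\alpha) < +\infty$ from Lemma \ref{Controllabilite}, boundedness of $\tfrac{\delta f}{\delta m}$ and $\tfrac{\delta g}{\delta m}$ from \eqref{Af},\eqref{Ag}, and that $\Psi(\bar m(t))\le 0$ means the kinetic energy of $\bar m$ is controlled independently of $\epsilon,\delta$), we obtain
$$\frac{c_0}{\epsilon}\int_0^T\lambda(t)\,dt + \frac{c_0}{\delta}\beta \leq C_1 - \int_{\R^d} u(0,x)\,dm_0(x).$$
To close, it remains to bound $-\int_{\R^d} u(0,x)\,dm_0(x)$ from above independently of $\epsilon,\delta$; equivalently, to bound $u(0,\cdot)$ from below. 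This is where I expect the main obstacle. The cleanest route is to observe that $\int_{\R^d} u(0,x)\,dm_0(x) = \inf J_{\epsilon,\delta}^l$ equals the value of the linearized problem, and to lower-bound that value by dropping the (now sign-controlled, hence harmless or helpfully negative) $\Psi$-terms against \emph{any} competitor and using the coercivity lower bound $L(x,q)\geq \tfrac{1}{4C_0}|q|^2 - C_0$ on the kinetic term — but one must still control the $\tfrac{\delta f}{\delta m},\tfrac{\delta g}{\delta m}$ contributions, which is fine since they are bounded, and the $\Psi$-terms evaluated against a competitor $m'$ with $\Psi(m')\le 0$, which are $\leq 0$. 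Thus $\int_{\R^d} u(0,x)\,dm_0(x) \geq -C_2$ for an absolute constant, giving $\tfrac{1}{\epsilon}\int_0^T\lambda\,dt + \tfrac{\beta}{\delta} \leq (C_1+C_2)/c_0 =: M$, as desired. The one subtlety to handle with care is that $c_0 = -\max(\Psi(m_0),-\eta_1) > 0$ depends on $m_0$ only through $\Psi(m_0)$ (consistent with the dependence claimed for $\epsilon_0,\delta_0$ in Theorem \ref{MainTheoremStayInside}), and that all constants are genuinely uniform in $\epsilon,\delta$.
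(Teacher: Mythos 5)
Your strategy is the same as the paper's: test the verification identity
$\int_{\R^d} u(0,\cdot)\,dm_0 = \inf J^l_{\epsilon,\delta} \le J^l_{\epsilon,\delta}(\bar m,\bar\omega)$ against the controllable competitor from Lemma~\ref{Controllabilite}, then use convexity of $\Psi$ (subgradient inequality) together with the sign information $\lambda(t)\Psi(m(t))\ge 0$, $\beta\Psi(m(T))\ge 0$ from \eqref{Exclusionlambdalemma}--\eqref{Exclusionbetalemma}. The core of your argument is sound, but your handling of the last step — lower-bounding $\int u(0,\cdot)\,dm_0$ — is not.

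The problem is that you try to lower-bound the \emph{infimum} of $J^l_{\epsilon,\delta}$ over all admissible competitors, and for that you would need a uniform lower bound on the functional itself, not an evaluation at a favorable competitor (an infimum is never lower-bounded by plugging in a single competitor). The difficulty is that for a generic competitor $(m,\omega)$ the term
$\int_0^T \frac{\lambda(t)}{\epsilon}\int_{\R^d}\frac{\delta\Psi}{\delta m}(\tilde m(t),x)\,dm(t)(x)\,dt$
can be as negative as $-\|\tfrac{\delta\Psi}{\delta m}\|_\infty\int_0^T\tfrac{\lambda(t)}{\epsilon}\,dt$, which is precisely the quantity you are trying to control — the argument becomes circular. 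Restricting attention to competitors with $\Psi(m')\le 0$ does not help either, since the infimum defining $J^l_{\epsilon,\delta}$ is over all candidates satisfying \eqref{CandidateRelaxed}, not just constraint-compatible ones.

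The fix is short and does not require any pointwise lower bound on $J^l_{\epsilon,\delta}$: by \eqref{valuelinearized}, the infimum is \emph{attained} at $(\tilde m,\tilde\omega)$, i.e.\ $\int_{\R^d}u(0,\cdot)\,dm_0 = J^l_{\epsilon,\delta}(\tilde m,\tilde\omega)$, and when evaluated at $(\tilde m,\tilde\omega)$ all the linearized cost terms are integrals of the form $\int_{\R^d}\tfrac{\delta U}{\delta m}(\tilde m(t),x)\,d\tilde m(t)(x)$, which vanish by the normalization convention. This leaves only the kinetic term $\int_0^T\int_{\R^d} L\,d\tilde m\,dt$, which is bounded below by $-C_0 T$ thanks to \eqref{GrA2}. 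That gives the missing absolute lower bound $\int u(0,\cdot)\,dm_0 \geq -C_0 T$, independent of $\epsilon,\delta$, and the rest of your computation closes the proof.
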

\begin{proof}
By Lemma \ref{NewControllabilitySeptembre2022} we can build a solution of the Fokker-Planck equation $(\bar{\alpha}, \bar{m})$ such that $J(\bar{\alpha}, \bar{m}) < +\infty $ and, for all $t \in [0,T]$, $\Psi( \bar{m}(t)) \leq - \theta $ for some $\theta >0$ independent of $t$. Using the fact that $(\bar{m}, \bar{\alpha})$ solves the Fokker-Planck equation, we can apply Lemma \ref{DualityLemmaSeptembre2022} to get
\begin{align*}
\int_0^T \int_{\R^d} &\left[ \bar{\alpha}(t,x).Du(t,x) + H(x,Du(t,x)) - \frac{\lambda(t)}{\epsilon} \frac{\delta \Psi }{\delta m}(m(t),x) -\frac{\delta \mathcal{F}}{\delta m}(m(t),x)\right] d \bar{m}(t)(x)dt \\
&= \int_{\R^d} \left[ \frac{\beta}{\delta} \frac{\delta \Psi}{\delta m}(m(T),x) +\frac{\delta \mathcal{G}}{\delta m}(m(T),x) \right]d \bar{m}(T)(x) - \int_{\R^d} u(0,x)dm_0(x).
\end{align*}
Now, reorganizing the terms and using the fact that, by definition of $L$, we have for all $(t,x)$ in $[0,T] \times \R^d$
$$ \bar{ \alpha} (t,x).Du(t,x) +  H(x,Du(t,x)) \geq -L(x, \bar{\alpha}(t,x)), $$
we get
\begin{align}
\notag -\int_0^T \int_{\R^d} & \frac{\lambda(t)}{\epsilon} \frac{\delta \Psi}{\delta m}(m(t),x) d \bar{m}(t)(x)dt - \int_{\R^d} \frac{\beta}{\delta} \frac{\delta \Psi}{\delta m}(m(T),x) d \bar{m}(T)(x) \\
\notag & \leq \int_0^T \int_{\R^d} \left[ L(x, \bar{\alpha}(t,x))  + \frac{\delta \mathcal{F}}{\delta m}(m(t),x) \right] d \bar{m}(t)(x)dt \\
 &+ \int_{\R^d} \frac{\delta \mathcal{G}}{\delta m}(m(T),x) d\bar{m}(T)(x) - \int_{\R^d} u(0,x) dm_0(x).
\label{InegaliteLemmeEstimationControllabilite}
\end{align}
On the one hand -using \eqref{valuelinearized} in the proof of Theorem \ref{TheoremPen} and the notations therein- we have that $\displaystyle \int_{\R^d} u(0,x)dm_0(x) = J_l^{\epsilon, \delta}(\tilde{m}, \tilde{\omega} )$. But the linearized costs cancel out when applied to $(\tilde{m}, \tilde{\omega})$ and therefore $J_l^{\epsilon, \delta}(\tilde{m}, \tilde{\omega} ) = J(\tilde{m}, \tilde{\omega})$. And since $L$, $\mathcal{F}$ and $\mathcal{G}$ are bounded from below we get a lower bound on $\displaystyle \int_{\R^d} u(0,x)dm_0(x)$ independent of $\epsilon$ and $\delta$. The other terms in the right-hand side of \eqref{InegaliteLemmeEstimationControllabilite} are also bounded from above since $J(\bar{\alpha},\bar{m}) <+\infty$ and since $x \mapsto \displaystyle \frac{\delta \mathcal{F}}{\delta m}(m,x)$ and $x\mapsto \displaystyle \frac{\delta \mathcal{G}}{\delta m}(m,x)$ are bounded in $E_n$ with bounds uniform in $m$ and $\bar{m}(t)$ belongs to $\mathcal{P}_2(\R^d)$ for all $t\in [0,T]$ . On the other hand, by convexity of $\Psi$ we get for all $t \in [0,T]$,
\begin{align*}
\int_{\R^d} \frac{\delta \Psi}{\delta m} (m(t),x) d \bar{m}(t)(x) & \leq \Psi( \bar{m}(t)) - \Psi(m(t)) \\
& \leq -\theta - \Psi(m(t))
\end{align*}
and by definition of $\lambda$ and $\beta$ we have $ \displaystyle \lambda(t) \Psi(m(t)) \geq 0 $  for all $t \in [0,T]$ and $\beta \Psi(m(T)) \geq 0$ and thus, if $C>0$ is an upper bound for the right-hand side of \eqref{InegaliteLemmeEstimationControllabilite} we get
$$ \int_0^T \frac{\lambda(t)}{\epsilon} dt + \frac{\beta}{\delta} \leq \frac{C}{\theta}, $$
which concludes the proof of the Lemma.
\end{proof}

\begin{remark}
Notice that this estimate, together with the  construction of Lemma \eqref{NewControllabilitySeptembre2022} are the only steps which require the convexity of $\Psi$, Assumption \eqref{PsiConvex} as well as the condition that $\Psi(m_0)$ must be strictly negative, Assumption \eqref{SartInside}.
\end{remark}

We can combine this Lemma with Theorem \ref{TheoremHJB14Septembre2022} to find uniform in $\epsilon,\delta$ estimates for the system of Optimality Conditions \eqref{OCPen}.

\begin{proposition} There is some $C  >0$ such that, for any $\epsilon, \delta >0$ and any solution $(m,u,\lambda,\beta)$ of \eqref{OCPen} satisfying \eqref{Exclusionlambda} and \eqref{Exclusionbeta} it holds 
$$ \sup_{t\in [0,T]} \|u(t) \|_n \leq C. $$
\label{PropositionEstimates}
\end{proposition}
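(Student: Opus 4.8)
The plan is to reduce the claim to \emph{a priori} estimates for a single genuinely parabolic Hamilton--Jacobi equation, and to observe that the constants produced there depend on the data only through norms that are uniformly controlled by Lemma~\ref{LemmaL1Estimate}. Write the source term of the HJB equation in \eqref{OCPen} as $\psi(t,x):=\frac{\lambda(t)}{\epsilon}\frac{\delta\Psi}{\delta m}(m(t),x)+\frac{\delta f}{\delta m}(t,m(t),x)$ and the terminal datum as $\tilde g(x):=\frac{\beta}{\delta}\frac{\delta\Psi}{\delta m}(m(T),x)+\frac{\delta g}{\delta m}(m(T),x)$. Since $\frac{\delta\Psi}{\delta m},\frac{\delta f}{\delta m},\frac{\delta g}{\delta m}$ and all their space derivatives up to order $n$ are bounded by a structural constant, $\sup_x\|\psi(t,\cdot)\|_{\mathcal{C}^n_b}\le C(\tfrac{\lambda(t)}{\epsilon}+1)$ and $\|\tilde g\|_{\mathcal{C}^n_b}\le C(\tfrac{\beta}{\delta}+1)$. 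Integrating in time and using $0\le\lambda\le1$ together with Lemma~\ref{LemmaL1Estimate} gives
$$\int_0^T\|\psi(t,\cdot)\|_{\mathcal{C}^n_b}\,dt\le C(M+T),\qquad \|\tilde g\|_{\mathcal{C}^n_b}\le C(M+1),$$
with $C$ independent of $(\epsilon,\delta)$. Note that $\psi$ is \emph{not} uniformly bounded in $L^\infty_t\mathcal{C}^n_x$ (the bound there is of order $1/\epsilon$); only its $L^1$-in-time norm is uniform, which is exactly why the substitution of Definition~\ref{DefHJB} is indispensable here.

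\textbf{Reduction to a nice parabolic equation.} By Definition~\ref{DefHJB}, $u(t,x)=v(t,x)+\int_t^T\psi(s,x)\,ds$ with $v\in\mathcal{C}^{1,2}_b([0,T]\times\R^d)$ the classical solution of \eqref{HJBbackwardvdef}, i.e.
$$-\partial_t v+H\big(x,\,Dv+\Phi(t,x)\big)-\Delta v=\rho(t,x),\qquad v(T,\cdot)=\tilde g,$$
where $\Phi(t,x):=\int_t^T D\psi(s,x)\,ds$ and $\rho(t,x):=\int_t^T\Delta\psi(s,x)\,ds$. The time integration converts the uniform $L^1_t$ bound of the first step into \emph{uniform} bounds $\|\Phi\|_{L^\infty_t\mathcal{C}^{n-1}_b}\le C(M+T)$ and $\|\rho\|_{L^\infty_t\mathcal{C}^{n-2}_b}\le C(M+T)$; here $n\ge3$ is used so that $\rho$ is at least Lipschitz in $x$. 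Thus $v$ solves a uniformly parabolic equation with Hamiltonian $\tilde H(t,x,p):=H(x,p+\Phi(t,x))$ whose $x$-derivatives up to order $n-1$ and $p$-derivatives up to order $n$ are bounded on every strip $\R^d\times B(0,R)$, uniformly in $(\epsilon,\delta)$ (by assumptions (AH3)--(AH4) and the bound on $\Phi$), with $D_{pp}\tilde H=D_{pp}H\ge\mu^{-1}I_d$ by (AH6), and with bounded smooth source $\rho$ and bounded smooth terminal datum $\tilde g$.

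\textbf{Estimates for $v$ and conclusion.} One first gets $\|v\|_\infty\le C$ uniformly from the comparison principle for the $v$-equation (equivalently, from a verification/stochastic-control representation, using the coercivity $L\ge-C_0$ and the bounds above). Next, a Bernstein argument yields $\|Dv\|_\infty\le C$: setting $w:=\tfrac12|Dv|^2$ and differentiating the $v$-equation one finds
$$-\partial_t w+D_p\tilde H\cdot Dw-\Delta w+|D^2v|^2=Dv\cdot D\rho-Dv\cdot\nabla_x\tilde H,$$
and since $|\nabla_x\tilde H|\le|D_xH|+|D_pH|\,|D_x\Phi|\le C(1+|Dv|)$ by (AH5), the linear growth of $D_pH$ (from (AH4) and (AH6)), and the bound on $\Phi$, the right-hand side is $\le C(1+w)$; dropping $|D^2v|^2\ge0$ and applying the parabolic maximum principle with a backward Gronwall estimate (and a standard coercive localization to handle the non-compactness of $\R^d$) gives $\|Dv\|_\infty\le C$, uniformly in $(\epsilon,\delta)$. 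Once $\|Dv\|_\infty\le C$, the argument $p=Dv+\Phi$ stays in a fixed ball, the $v$-equation becomes a uniformly parabolic quasilinear equation with all relevant coefficients bounded in $\mathcal{C}^{n-2}_x$ and smooth bounded right-hand side, and a routine bootstrap — differentiating the equation in $x$ and applying interior parabolic Schauder (and $L^p$) estimates, the equation for $D^kv$ being at each step linear uniformly parabolic with coefficients controlled by the already-bounded lower-order derivatives — yields $\|D^kv\|_\infty\le C$ for $0\le k\le n$, with $C$ depending only on the structural constants and $M$. Finally, $\|D^ku\|_\infty\le\|D^kv\|_\infty+\int_0^T\|D^k\psi(s,\cdot)\|_\infty\,ds\le C$ for $0\le k\le n$, which is the claim; the detailed execution is the content of Appendix~\ref{sec: The HJB equation} (Theorem~\ref{TheoremHJB2021}), whose estimates depend on the data only through the norms bounded in the first two steps.

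\textbf{Main obstacle.} The delicate point is the uniform first-order (Lipschitz) estimate for $v$: the drift $D_p\tilde H(t,x,Dv)$ in the linearized equation is \emph{a priori} unbounded, so one cannot simply invoke linear parabolic theory, and the Bernstein argument must absorb the gradient growth of $\nabla_x\tilde H$ permitted by (AH5) using the good sign of the term $|D^2v|^2$; the non-compactness of $\R^d$ also forces a localization in the maximum principle. Once the Lipschitz bound is secured, the higher-order estimates are a standard quasilinear bootstrap, and the only bookkeeping point is to check that the available regularity of the data ($\psi\in L^1_t\mathcal{C}^n_x$, hence $\Phi\in L^\infty_t\mathcal{C}^{n-1}_x$, $\rho\in L^\infty_t\mathcal{C}^{n-2}_x$, $\tilde g\in\mathcal{C}^n_x$) together with $H\in\mathcal{C}^n$ is exactly enough to reach order $n$.
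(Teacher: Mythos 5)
Your overall strategy matches the paper's: reduce to a priori estimates for the HJB equation, use Lemma~\ref{LemmaL1Estimate} to make all constants depend on $(\epsilon,\delta,\lambda,\beta)$ only through $M$, invoke Definition~\ref{DefHJB} to convert the $L^1_t$ control of the singular source into an $L^\infty_t$ control after the substitution $u=v+\int_t^T\psi\,ds$, and run Bernstein for the Lipschitz bound followed by a bootstrap. Your Bernstein computation on $v$ is essentially the same as Lemma~\ref{LipEstimate} (the paper applies it directly to $u$, absorbing the singular term $\frac{\lambda(t)}{\epsilon}D\varphi\cdot Du$ through the $L^1_t$ bound and a super-solution comparison; you transfer the singularity into $\Phi$ and $\rho$ first — both are fine and equivalent).

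There is, however, a genuine gap in the final bootstrap. You claim that differentiating the $v$-equation and applying parabolic Schauder/$L^p$ theory yields $\|D^kv\|_\infty\le C$ for $0\le k\le n$. But the source of the $v$-equation is $\rho=\int_t^T\Delta\psi\,ds$, which lies in $L^\infty_t\mathcal{C}^{n-2}_b$ and no better: the Laplacian costs two spatial derivatives relative to $\psi\in L^1_t\mathcal{C}^n_b$. Differentiating the $v$-equation $n-2$ times (the maximum order at which $D^{n-2}\rho$ is available) and applying parabolic $L^p$ theory plus Sobolev embedding gives $D^{n-1}v$ bounded, but the equation for $D^{n-1}v$ would require $D^{n-1}\rho$, which does not exist; and $D^{n-2}\rho$ is merely bounded and continuous (Assumptions \eqref{PsiC1}--\eqref{PsiC2} and \eqref{Af} give no Hölder modulus), so Schauder cannot jump the last step to $D^nv$ either. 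Thus the top-order bound $\|D^nu\|_\infty\le C$ is not justified by the argument you give — and since $n$ may equal $3$ and Proposition~\ref{PropDerivatives} uses $D\Delta u=D^3u$, this is not merely cosmetic. The paper's Appendix~\ref{sec: The HJB equation} avoids exactly this loss by carrying out the higher-order bootstrap on the Duhamel representation of $u$ itself: $D^ku(s,\cdot)$ is written through $\int_s^T P_{t-s}\bigl[\tfrac{\lambda(t)}{\epsilon}D^k\varphi+D^k\psi\bigr]dt$ (so $\psi$ enters at order $k\le n$ with no extra Laplacian) together with $\int_s^T DP_{t-s}\,D^{k-1}\bigl[H_R(\cdot,Du)\bigr]dt$, and a Volterra/Picard iteration in exponentially weighted $\mathcal{C}^n_b$-norms closes the argument at order $n$. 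The fix for your proof is therefore to reserve the $v$-substitution for the Lipschitz estimate only, then — once $\|Du\|_\infty$ is bounded so $H$ can be truncated — perform the $k\le n$ bootstrap on the $u$-representation as in Theorem~\ref{TheoremHJB2021}.
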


At this stage, the above estimates would be sufficient to pass to the limit when $\epsilon$ and $\delta$ go to zero in the penalized problem \eqref{PenalizedProblem}. We would find, at the limit, solutions of the constrained problem \eqref{Problem} and passing to the limit in the optimality conditions we would find that the solutions to the constrained problem satisfy similar conditions with $\displaystyle \frac{\lambda}{\epsilon} $ replaced by a non-negative Radon measure $\nu \in \mathcal{M}^+([0,T])$. This would lead to a priori discontinuous (in time) optimal controls. However, we refrain from following such approach for now. Instead we are going to exhibit a special behavior of the optimal solutions of the penalized problem. Indeed we are going to show in the next section that solutions of the penalized problem stay inside the constraint when the penalization is strong enough. Consequently it is sufficient to take $\epsilon$ and $\delta$ small to get solutions to the constrained problem and optimal controls for the constrained problem are still continuous.

\subsection{Second order analysis}

\label{sec: Second order analysis}

The special behavior (described just above) of the solutions will be a simple consequence of the fact that we cannot have simultaneously  $\Psi(m(t)) > 0$ and $\frac{d^2}{dt^2}\Psi(m(t)) \leq 0$ (here $m$ is a solution to \eqref{PenalizedProblem})   when the penalization is strong enough. The purpose of this section is to prove the regularity and a suitable expansion of the map $t \mapsto \Psi(m(t))$.
%
%
\begin{proposition}
Suppose that $(m,u, \lambda, \beta)$ is a solution of \eqref{OCPen} for some $\epsilon, \delta >0$. Then the map $ t \mapsto \Psi(m(t)) $ is $\mathcal{C}^1$ in $[0,T]$ and $\mathcal{C}^2$ in $[0,T] \bigcap \{t:  \Psi(m(t)) \neq 0 \}$ with derivatives given by
\begin{align*}
\frac{d}{dt}\Psi(m(t)) &= - \int_{\R^d} D_m \Psi(m(t),x).D_pH(x,Du(t,x)) dm(t)(x) \\
&+ \int_{\R^d} \mdiv_x D_m \Psi(m(t),x) dm(t)(x) 
\end{align*}
and
\begin{align*}
 \frac{d^2}{dt^2} \Psi(m(t)) &= \frac{\lambda(t)}{\epsilon} \int_{\R^d} D_m\Psi(m(t),x).D^2_{pp}H(x,Du(t,x))D_m\Psi(m(t),x)dm(t)(x) \\
 & + F(Du(t), D^2u(t),D\Delta u(t), m(t) )
 \end{align*}
for some functional $F : \mathcal{C}_b(\R^d, \R^d) \times \mathcal{C}_b(\R^d, \mathbb{S}_d(\R)) \times \mathcal{C}_b(\R^d, \R^d)  \times \mathcal{P}_2(\R^d) \rightarrow \R$ independent of $\epsilon$ and $\delta$ and bounded in sets of the form $\mathcal{A} \times \mathcal{P}_2(\R^d)$ for bounded subsets $\mathcal{A}$ of $\mathcal{C}_b(\R^d, \R^d) \times \mathcal{C}_b(\R^d, \mathbb{S}_d(\R)) \times \mathcal{C}_b(\R^d, \R^d)$. 

\label{PropDerivatives}

\end{proposition}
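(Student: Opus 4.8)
The plan is to compute the first and second derivatives of $t \mapsto \Psi(m(t))$ by applying Itô's formula for flows of probability measures (Proposition \ref{ItoFlowMeasures}) twice, exploiting the regularity of $u$ from Proposition \ref{PropositionEstimates} and the smoothness assumptions \eqref{PsiC1}--\eqref{PsiC2} on $\Psi$. For the first derivative, I would apply Proposition \ref{ItoFlowMeasures} directly to $U = \Psi$: since $(m, \alpha)$ with $\alpha(t,x) = -D_pH(x, Du(t,x))$ solves the Fokker-Planck equation and $\alpha$ is bounded (by Proposition \ref{PropositionEstimates} and Assumption (AH4)), the hypotheses of Proposition \ref{ItoFlowMeasures} are met, giving
\begin{align*}
\Psi(m(t)) = \Psi(m_0) &- \int_0^t \int_{\R^d} D_m\Psi(m(s),x).D_pH(x,Du(s,x))\,dm(s)(x)\,ds \\
&+ \int_0^t \int_{\R^d} \mdiv_x D_m\Psi(m(s),x)\,dm(s)(x)\,ds.
\end{align*}
Differentiating in $t$ (the integrand is continuous in $s$ because $s \mapsto m(s)$ is continuous in $\mathcal{P}_2$, $u \in \mathcal{C}([0,T],\mathcal{C}^n_b)$, and the $\Psi$-data are jointly continuous and bounded) yields the stated formula for $\frac{d}{dt}\Psi(m(t))$, and shows $t \mapsto \Psi(m(t))$ is $\mathcal{C}^1$ on all of $[0,T]$.

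For the second derivative, I would write $\phi(t) := \frac{d}{dt}\Psi(m(t)) = \int_{\R^d} G(m(t),x)\,dm(t)(x)$ where $G(m,x) := -D_m\Psi(m,x).D_pH(x,Du(t,x)) + \mdiv_x D_m\Psi(m,x)$ — but note $G$ depends on $t$ through $Du(t,\cdot)$ as well as through $m$, so I should treat $\phi(t)$ as $\int_{\R^d} \Phi(t,m(t),x)\,dm(t)(x)$ and differentiate using both the chain rule in $t$ (hitting $Du(t)$, which produces $\partial_t Du(t) = D\partial_t u(t)$ and hence, via the HJB equation, terms involving $D(H(x,Du) - \Delta u - \text{source})$) and Itô's formula for the $m(t)$-dependence. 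The HJB equation $-\partial_t u + H(x,Du) - \Delta u = \frac{\lambda(t)}{\epsilon}\frac{\delta\Psi}{\delta m}(m(t),x) + \frac{\delta f}{\delta m}(t,m(t),x)$ lets me substitute $\partial_t u$, and differentiating it in $x$ gives $\partial_t Du$ in terms of $D_xH + D_{pp}H\, D^2u - D\Delta u - \frac{\lambda(t)}{\epsilon}D_x\frac{\delta\Psi}{\delta m} - D_x\frac{\delta f}{\delta m}$. When this is dotted into $D_m\Psi(m(t),x)$ and integrated against $m(t)$, the only term carrying the factor $\frac{\lambda(t)}{\epsilon}$ that does not cancel or remain bounded is
\[
\frac{\lambda(t)}{\epsilon}\int_{\R^d} D_m\Psi(m(t),x).D^2_{pp}H(x,Du(t,x))D_m\Psi(m(t),x)\,dm(t)(x),
\]
using $D_m\Psi = D_x\frac{\delta\Psi}{\delta m}$; all remaining contributions (the Itô terms involving $D_m$ and $D_xD_m$ of the integrand, the $D_xH$, $D\Delta u$, $D_x\frac{\delta f}{\delta m}$ pieces, and the $\frac{\lambda(t)}{\epsilon}D_m\Psi \cdot D_{pp}H \cdot (\text{non-}D_m\Psi)$-type terms arising from the $m$-dependence of $\Psi$ itself, which vanish because $D_m\Psi(m,x) = D_x\frac{\delta\Psi}{\delta m}(m,x)$ appears symmetrically — I need to be careful here) are collected into the functional $F(Du(t),D^2u(t),D\Delta u(t),m(t))$. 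Boundedness of $F$ on bounded sets follows from Assumptions \eqref{Af}, \eqref{Ag}, (AH3)--(AH6), the commutation identities for the derivatives of $\Psi$, and the boundedness of the $\Psi$-data; crucially $F$ is independent of $\epsilon,\delta$ because all $\epsilon,\delta$-dependence is confined to the explicitly displayed $\frac{\lambda(t)}{\epsilon}$-term (the source $\frac{\delta f}{\delta m}$ and terminal data carry no such factor, and the other occurrence of $\frac{\lambda}{\epsilon}$ cancels against the Itô term coming from differentiating $D_m\Psi(m(t),\cdot)$ in the direction $\partial_t m$... actually I expect a more delicate bookkeeping here).

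The main obstacle is the rigorous justification of differentiating $\phi(t)$ under the integral sign and applying a "second" Itô formula: $\phi(t)$ is an integral against $m(t)$ of an integrand that is itself a nonlinear functional of $m(t)$ (through $\frac{\delta\Psi}{\delta m}$, $D_m\Psi$, $\mdiv_x D_m\Psi$) and of $t$ (through $Du(t)$), so one must verify that the composite map $m \mapsto \int G(m,\cdot)\,dm$ is $\mathcal{C}^1$ on $\mathcal{P}_2$ with the regularity needed for Proposition \ref{ItoFlowMeasures} — this is exactly where Assumption \eqref{PsiC2} (existence and boundedness of $\frac{\delta^2\Psi}{\delta m^2}$ and its derivatives) and the commutation identities for $\Psi$'s derivatives are used. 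One also needs $t \mapsto Du(t)$ to be differentiable into $\mathcal{C}_b(\R^d)$, which holds off the (measure-zero, but here we only claim $\mathcal{C}^2$ away from $\{\Psi(m(t))=0\}$ anyway) set where $\lambda$ jumps: on $\{\Psi(m(t))\ne 0\}$, $\lambda$ is locally constant by the exclusion condition \eqref{Exclusionlambda} (being $0$ or $1$ there — continuity of $t\mapsto\Psi(m(t))$ gives an open set), so $\partial_t u$ is continuous there by the remark following Definition \ref{DefHJB}, and $v = u - \int_t^T\psi$ is $\mathcal{C}^{1,2}_b$, giving enough regularity to run the computation. Assembling the error terms into a single functional $F$ with the claimed boundedness and $\epsilon,\delta$-independence is then a matter of careful but routine bookkeeping, which I would not carry out in full here.
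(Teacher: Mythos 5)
Your approach is essentially the paper's: apply Proposition \ref{ItoFlowMeasures} to $\Psi$ for the first derivative; then, on $\{\Psi(m(t)) \neq 0\}$, use local constancy of $\lambda$ so that $u \in \mathcal{C}_b^{1,2}$ locally, differentiate the HJB equation in $x$ to obtain $\partial_t Du$, and combine this with a second application of Itô's formula (equivalently, the Fokker--Planck equation tested against $l(t,x) := -D_m\Psi(m(t),x).D_pH(x,Du(t,x)) + \mdiv_x D_m\Psi(m(t),x)$, whose $\mathcal{C}^{1,2}_b$ regularity follows from \eqref{PsiC1}--\eqref{PsiC2} and Proposition \ref{ItoFlowMeasures} applied to $D_m\Psi(\cdot,x)$ and $\mdiv_x D_m\Psi(\cdot,x)$).

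One point to clear up, since you flagged it yourself: you anticipate a second $\frac{\lambda(t)}{\epsilon}$-bearing contribution arising from the $m(t)$-dependence of $D_m\Psi$ and hope it cancels. No such term exists and no cancellation is needed. Differentiating $D_m\Psi(m(t),x)$ (and $\mdiv_x D_m\Psi(m(t),x)$) in $t$ via Itô introduces only the Fokker--Planck drift $-D_pH(\cdot, Du(t,\cdot))$ and the Laplacian, neither of which carries a $\frac{\lambda}{\epsilon}$ factor. The unique source of $\frac{\lambda}{\epsilon}$ is the term $-D_m\Psi.D^2_{pp}H\,\partial_t Du$: substituting $\partial_t Du = D_xH + D^2u\,D_pH - D\Delta u - \frac{\lambda(t)}{\epsilon}D_m\Psi - D_m f$ from the spatially differentiated HJB equation isolates exactly $+\frac{\lambda(t)}{\epsilon}\,D_m\Psi.D^2_{pp}H\,D_m\Psi$, and all remaining pieces depend only on $(Du(t),D^2u(t),D\Delta u(t),m(t))$ and are swept into $F$. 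So the bookkeeping is more mechanical than you fear, and your claimed ``symmetric vanishing'' is not a mechanism the proof relies on.
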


\begin{proof}
Since $\Psi$ is supposed to satisfy Assumption \eqref{Newregularity2022},  we can use Proposition \ref{ItoFlowMeasures} and, for all $ t\in [0,T]$ we get
\begin{align*}
\Psi(m(t)) &= \Psi(m_0) - \int_0^t \int_{\R^d} D_m\Psi(m(s),x).D_pH(x,Du(s,x))dm(s)(x)ds \\
&+ \int_0^t\int_{\R^d} \mdiv_xD_m\Psi(m(s),x)dm(s)(x)ds. 
\end{align*}
Being $u$ in $\mathcal{C}([0,T] , E_n)$ and $m$ in $\mathcal{C}([0,T] , \mathcal{P}_2(\R^d))$ we get that $t \mapsto \Psi(m(t))$ is $\mathcal{C}^1$ with
\begin{align*}
\frac{d}{dt}\Psi(m(t)) &= - \int_{\R^d} D_m \Psi(m(t),x).D_pH(x,Du(t,x)) dm(t)(x) \\
&+ \int_{\R^d} \mdiv_x D_m \Psi(m(t),x) dm(t)(x). 
\end{align*}
Now we assume that $\Psi(m(t)) \neq 0$. We denote by $v(t,x)$ the integrand
$$v(t,x) := -D_m\Psi(m(t),x).D_pH(x,Du(t,x)) + \mdiv_xD_m\Psi(m(t),x) $$
The parameter $\lambda $ is constant (equal to $0$ or $1$) in a neighborhood $(t_1,t_2)$ of $t$ because of the exclusion condition \eqref{Exclusionlambda} and $u$ solves the HJB equation according to Definition \ref{DefinitionHJB12Septembre2022} so we have that $u$ belongs to $\mathcal{C}^{1,2}((t_1,t_2) \times \R^d)$. Moreover, 
$$\displaystyle \partial_t u (t,x) = H(x,Du(t,x)) - \Delta u(t,x) - \frac{\lambda(t)}{\epsilon}\frac{\delta \Psi}{\delta m}(m(t),x) - \frac{\delta \mathcal{F}}{\delta m}(m(t),x)$$
and $u$ belongs to $\mathcal{C}([0,T], E_n)$ with $n \geq 3$. This means that $\partial_t u $ is differentiable with respect to $x$ with
\begin{align*}
-\partial_tDu(t,x) &+D_xH(x,Du(t,x)) +  D^2u(t,x)D_pH(x,Du(t,x)) - D\Delta u(t,x) \\
&= \frac{\lambda(t)}{\epsilon}D_m \Psi(m(t),x) + D_m\mathcal{F}(m(t),x).  
\end{align*}
But $m$ solves the Fokker-Planck equation, $\Psi$ satisfies Assumptions \eqref{Newregularity2022} and \eqref{PsiC2} so we can apply Proposition \ref{ItoFlowMeasures} to $D_m\Psi(m(t),x) $  and $\mdiv_xD_m \Psi(m(t),x)$ and deduce that $v$ belongs to $\mathcal{C}_b^{1,2}((t_1,t_2) \times \R^d)$ and therefore $t \mapsto \frac{d}{dt}\Psi(m(t))$ is differentiable at $t$ with
$$\frac{d^2}{dt^2}\Psi (m(t)) = \int_{\R^d} \left[ \partial_t v(t,x) -D_pH(x,Du(t,x)).Dv(t,x) + \Delta v(t,x) \right] dm(t)(x). $$    
Computing $\partial_t v$ leads to
\begin{align*}
 \partial_t v(t,x) &= - \frac{d}{dt} D_m\Psi(m(t),x).D_pH(x,Du(t,x))  + \frac{d}{dt} \mdiv_x D_m\Psi(m(t),x) \\
 & - D_m\Psi(m(t),x).D^2_{pp}H(x,Du(t,x))\partial_tDu(t,x) \\
 &= - \frac{d}{dt} D_m\Psi(m(t),x) .D_pH(x,Du(t,x)) + \frac{d}{dt} \mdiv D_m\Psi(m(t),x) \\
&- D_m \Psi(m(t),x).D^2_{pp}H(x,Du(t,x))D^2u(t,x)D_pH(x,Du(t,x)) \\
&- D_m \Psi(m(t),x).D^2_{pp}H(x,Du(t,x))D_xH(x,Du(t,x)) \\
&+  D_m \Psi(m(t),x).D^{2}_{pp}H(x,Du(t,x))D\Delta u(t,x)  \\
&+ \frac{\lambda(t)}{\epsilon} D_m \Psi(m(t),x)D^2_{pp}H(x,Du(t,x)).D_m \Psi(m(t),x) \\
&+ D_m \Psi(m(t),x).D^2_{pp}H(x,Du(t,x)) D_m\mathcal{F}(m(t),x), \\
\end{align*}
and therefore
\begin{align*}
\frac{d^2}{dt^2} \Psi(m(t)) &= \frac{\lambda(t)}{\epsilon} \int_{\R^d} D_m\Psi(m(t),x).D^2_{pp}H(x,Du(t,x))D_m\Psi(m(t),x)dm(t)(x) \\
 & + F(Du(t), D^2u(t),D\Delta u(t),m(t) )
 \end{align*}
with 
\begin{align*}
F(Du(t),& D^2u(t),D\Delta u(t), m(t)) =  \int_{\R^d}  \left[ - D_pH(x,Du(t,x)).Dv(t,x) + \Delta v(t,x) \right ] dm(t)(x) \\
& - \int_{\R^d} \frac{d}{dt} D_m\Psi(m(t),x).D_pH(x,Du(t,x)) dm(t)(x) \\
& + \int_{\R^d} \frac{d}{dt} \mdiv_x D_m\Psi(m(t),x) dm(t)(x) \\
& - \int_{\R^d} D_m \Psi(m(t),x).D^2_{pp}H(x,Du(t,x))D^2u(t,x)D_pH(x,Du(t,x)) dm(t)(x) \\
&- \int_{\R^d} D_m \Psi(m(t),x).D^2_{pp}H(x,Du(t,x))D_xH(x,Du(t,x)) dm(t)(x)\\
&+  \int_{\R^d} D_m \Psi(m(t),x).D^{2}_{pp}H(x,Du(t,x))D\Delta u(t,x) dm(t)(x) \\
&+ \int_{\R^d}  D_m \Psi(m(t),x).D^2_{pp}H(x,Du(t,x)) D_m\mathcal{F}(m(t),x) dm(t)(x).
\end{align*}
\end{proof}

\begin{remark}
An explicit formula for $Dv$, $\Delta v$ or $F$ is not necessary for our purpose however a tedious but straightforward computation leads to
\begin{align*}
\frac{d^2}{dt^2}\Psi(m(t)) &= \frac{\lambda(t)}{\epsilon} \int_{\R^d} D_m\Psi(m(t),x). D^2_{pp}H(x,Du(t,x))D_m\Psi(m(t),x)dm(t)(x) \\
&+ \int_{\R^d} \Delta_x\mdiv_xD_m\Psi(x,m(t))dm(t)(x)\\
&+ \int_{\R^d} \int_{\R^d} \mdiv_x\mdiv_yD^2_{mm}\Psi(m(t),x,y)dm(t)(x)dm(t)(y) \\
&-2 \int_{\R^d} \int_{\R^d} \mdiv_yD^2_{mm}\Psi(m(t),x,y).D_pH(x,Du(t,x))dm(t)(x)dm(t)(y) \\
&-2 \int_{\R^d} \overrightarrow{\Delta}_xD_m\Psi(m(t),x).D_pH(x,Du(t,x))dm(t)(x) \\
&+ \int_{\R^d} D_m \Psi(m(t),x).D^2_{pp}H(x,Du(t,x)) D_m \mathcal{F}(m(t),x) dm(t)(x) \\
&+ \int_{\R^d} \int_{\R^d} D^2_{mm}\Psi(m(t),x,y) D_pH(x,Du(t,x). D_pH(y,Du(t,y))dm(t)(x)dm(t)(y) \\
&+ \int_{\R^d} D_xD_m\Psi(m(t),x)D_pH(x,Du(t,x)).D_pH(x,Du(t,x))dm(t)(x) \\
&-2 \int_{\R^d} D_xD_m\Psi(m(t),x).D^2u(t,x)D^2_{pp}H(x,Du(t,x))dm(t)(x)\\
&- \sum_{i=1}^n \int_{\R^d} D_{x_i} \frac{\delta \Psi}{\delta m}(m(t),x)D^2u(t,x).D^2u(t,x) D^2_{pp}\partial_{p_i}H(x,Du(t,x))dm(t)(x) \\
&-\int_{\R^d} D_m\Psi(m(t),x). \overrightarrow{\Delta}_xD_pH(x,Du(t,x))dm(t)(x)\\
&-2\int_{\R^d} D_xD_m\Psi(m(t),x)D^2_{xp}H(x,Du(t,x))dm(t)(x) \\
&-\int_{\R^d} D_m\Psi(m(t),x).D^2_{pp}H(x,Du(t,x))D_xH(x,Du(t,x))dm(t)(x) \\
&+\int_{\R^d} D^2_{xp}H(x,Du(t,x))D_m\Psi(m(t),x).D_pH(x,Du(t,x))dm(t)(x)\\
&-2 \sum_{i=1}^n \int_{\R^d} \partial_{x_i} \frac{\delta \Psi}{\delta m}(m(t),x) D^2_{xp}\partial_{p_i}H(x,Du(t,x)).D^2u(t,x)dm(t)(x).
\end{align*}
The formula above shows in particular that the terms in $D\Delta u$ cancel out and thus $F$ depends only on the derivatives of $u$ up to order two.
\end{remark}

\subsection{Proof of the main theorems}

\label{sec: Proof of the Main Theorems}

\begin{proposition}
There is some $\epsilon_0, \delta_0>0$ such that any solution $(m, \alpha)$ of Problem \eqref{PenalizedProblem} for some $(\epsilon,\delta) \in (0,\epsilon_0] \times (0, \delta_0]$ stays inside the constraint at all time:
$$ \Psi(m(t)) \leq 0, \hspace{30pt} \forall t\in [0,T]. $$
\label{Insidetheconstraint}
\end{proposition}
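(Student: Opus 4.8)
The plan is to argue by contradiction using the second order analysis of Proposition \ref{PropDerivatives} combined with the uniform estimates of Lemma \ref{LemmaL1Estimate} and Proposition \ref{PropositionEstimates}, together with the transversality condition \eqref{TransCondPsi}. Suppose that for every $\epsilon_0, \delta_0>0$ there is a solution $(m,\alpha)$ (with associated $(u,\lambda,\beta)$ from Theorem \ref{TheoremPen}) for some $(\epsilon,\delta)\in(0,\epsilon_0]\times(0,\delta_0]$ with $\max_{t\in[0,T]}\Psi(m(t))>0$. First I would use Lemma \ref{Jamaistresloin} to fix $\rho=\eta_1$ and restrict to $\epsilon\le\epsilon(\eta_1)$, so that $\Psi(m(t))\le\eta_1$ for all $t$; hence \emph{everywhere} on $[0,T]$ the transversality condition gives $\int_{\R^d}|D_m\Psi(m(t),x)|^2dm(t)(x)>\eta_2$. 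Also, since $\Psi(m_0)<0$ and $\Psi(m(T))$ is controlled by $\beta$, the positive maximum of $t\mapsto\Psi(m(t))$ is attained at some interior point, or at $T$; in either case I will work at a point $t^\star$ where $\Psi(m(t^\star))>0$.

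Next I would localize around $t^\star$. On the open set $\{t:\Psi(m(t))>0\}$ we have $\lambda(t)=1$ by \eqref{Exclusionlambda}, and on a neighborhood of $t^\star$ where $\Psi(m(t))\ne0$ the map $t\mapsto\Psi(m(t))$ is $\mathcal{C}^2$ by Proposition \ref{PropDerivatives}. The key point is to choose $t^\star$ to be a point of local maximum of $t\mapsto\Psi(m(t))$ with value $>0$: if the max is attained in the interior, first and second derivative tests give $\frac{d}{dt}\Psi(m(t^\star))=0$ and $\frac{d^2}{dt^2}\Psi(m(t^\star))\le0$; if it is attained at $T$, a one-sided argument still yields $\frac{d^2}{dt^2}\Psi(m(t^\star))\le0$ at nearby interior points approaching $T$, or one argues directly that $\frac{d}{dt}\Psi(m(T))\ge0$ is incompatible with the terminal condition and $\beta$ — I would handle the endpoint case separately, possibly by shrinking $\delta_0$ so that $\Psi(m(T))\le0$ follows from a similar computation on the terminal penalization, or simply note that if the max is at $T$ then $\frac{d}{dt}\Psi(m(t))>0$ on a left-neighborhood which, combined with the expression for $\frac{d}{dt}\Psi$ and the boundedness of $Du$ from Proposition \ref{PropositionEstimates}, contradicts the terminal value of $u$ when $\delta$ is small.

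The crux is then the sign of $\frac{d^2}{dt^2}\Psi(m(t^\star))$. By Proposition \ref{PropDerivatives}, at $t^\star$ (where $\lambda=1$),
\begin{align*}
\frac{d^2}{dt^2}\Psi(m(t^\star)) &= \frac{1}{\epsilon}\int_{\R^d} D_m\Psi(m(t^\star),x).D^2_{pp}H(x,Du(t^\star,x))D_m\Psi(m(t^\star),x)\,dm(t^\star)(x)\\
&\quad+ F(Du(t^\star),D^2u(t^\star),D\Delta u(t^\star),m(t^\star)).
\end{align*}
By Assumption \eqref{TransCondPsi} (usable since $|\Psi(m(t^\star))|\le\eta_1$) together with the lower bound $D^2_{pp}H\ge\frac1\mu I_d$ from (AH6), the first term is bounded below by $\frac{1}{\mu\epsilon}\eta_2$. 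By Proposition \ref{PropositionEstimates}, $Du,D^2u,D\Delta u$ are bounded in $\mathcal{C}_b$ uniformly in $(\epsilon,\delta)$ (for $k\le n$, $n\ge3$), so $(Du(t^\star),D^2u(t^\star),D\Delta u(t^\star))$ ranges in a fixed bounded set $\mathcal{A}$, and hence $|F|\le C_F$ for a constant $C_F$ independent of $\epsilon,\delta$. Therefore $\frac{d^2}{dt^2}\Psi(m(t^\star))\ge\frac{\eta_2}{\mu\epsilon}-C_F$, which is strictly positive once $\epsilon<\epsilon_0:=\frac{\eta_2}{\mu C_F}$ — contradicting $\frac{d^2}{dt^2}\Psi(m(t^\star))\le0$. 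This forces $\Psi(m(t))\le0$ for all $t$, proving the proposition.

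The main obstacle I anticipate is the endpoint analysis: ensuring that the positive maximum cannot ``hide'' at $t=T$ where the interior second-derivative test is unavailable. I expect this is handled by a separate but parallel argument using the terminal penalization $\frac1\delta\Psi^+(m(T))$ and the terminal condition $u(T,x)=\frac{\beta}{\delta}\frac{\delta\Psi}{\delta m}(m(T),x)+\frac{\delta g}{\delta m}(m(T),x)$: roughly, if $\Psi(m(T))>0$ then $\beta=1$, and computing $\frac{d}{dt}\Psi(m(t))$ at $t=T$ using the optimal $\alpha=-D_pH(x,Du(T,x))$ with the terminal value of $Du$ — which grows like $\frac1\delta D_m\Psi(m(T),x)$ — produces a term $\sim-\frac1{\delta\mu}\int|D_m\Psi(m(T),x)|^2dm(T)(x)\le-\frac{\eta_2}{\delta\mu}$ that dominates the bounded remainder, giving $\frac{d}{dt}\Psi(m(T))<0$ for $\delta$ small; but $\Psi(m(\cdot))$ increasing up to its max at $T$ needs $\frac{d}{dt}\Psi(m(T))\ge0$, a contradiction. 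Choosing $\delta_0$ accordingly completes the argument; the rest is bookkeeping with the uniform estimates.
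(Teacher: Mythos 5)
Your proposal is correct and follows essentially the same route as the paper's proof: invoke Lemma \ref{Jamaistresloin} to place the trajectory in the transversality zone, then derive a contradiction from the second-derivative test at an interior local maximum (via Proposition \ref{PropDerivatives}, (AH6), \eqref{TransCondPsi}, and the uniform bound on $F$ from Proposition \ref{PropositionEstimates}), and, for a maximum at $t=T$, from the sign of $\frac{d}{dt}\Psi(m(T))$ using the $1/\delta$-growth of $Du(T,\cdot)$ coming from the terminal condition. Your stray suggestion that a boundary maximum would force $\frac{d^2}{dt^2}\Psi(m(t))\le 0$ at nearby interior times is false in general, but you abandon it in favor of the first-order argument at $T$, which is exactly what the paper does, so the final argument is sound.
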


\begin{proof}
The proof follows closely the methodology of \cite{Cannarsa2018a} Lemma 3.7. Toward a contradiction we suppose that there exist a sequence $(\epsilon_k,\delta_k)_{k \in \mathbb{N}} \in( (0,1)\times (0,1) )^{\mathbb{N}}$ converging to $(0,0)$, corresponding solutions $(m_k, -D_pH(x,Du_k(t,x)))_{k \in \mathbb{N}}$ satisfying the conditions of Theorem \ref{TheoremPen} with corresponding multipliers $(\lambda_k,\beta_k)$ and times $(t_k)_{k \in \mathbb{N}} \in (0,T]$ which are local maximum points of $ t \mapsto \Psi(m_k(t))$ and such that $\Psi(m_k(t_k)) >0$. The couples $(m_k,\omega_k)$ are uniformly bounded in $\mathcal{C}^{1/2}([0,T], \mathcal{P}_2(\R^d)) \times \mathcal{M}([0,T] \times \R^d,\R^d)$ and we can assume that they converge in $\mathcal{C}^{(1-\delta)/2}([0,T], \mathcal{P}_{2-\delta}(\R^d)) \times \mathcal{M}([0,T] \times \R^d, \R^d)$, for some $\delta \in (0,1)$, toward some solution $(\tilde{m},\tilde{\omega})$ to the constrained problem. In particular, $\Psi( \tilde{m}(t)) \leq 0$ for all $t\in [0,T]$.

We first notice that, thanks to Lemma \ref{LemmaL1Estimate}, for large enough $k$, $\beta_k <1$ and therefore $\Psi(m_k(T)) \leq 0$ and $t_k \neq T$.

Using Proposition  \ref{PropDerivatives} yields that $t \mapsto \Psi(m_k(t))$ is $C^2$ in a neighborhood of $t_k$ and, 
\begin{align*}
\frac{d^2}{dt^2} \Psi(m_k(t))|_{t=t_k} &= \frac{1}{\epsilon_k} \int_{\R^d} D_m\Psi(m_k(t_k),x).D^2_{pp}H(x,Du_k(t_k,x))D_m\Psi(m_k(t_k),x)dm_k(t_k)(x) \\
 & + F(Du_k(t_k), D^2u_k(t_k), D\Delta u_k(t_k), m_k(t_k) ) \\
 & \geq \frac{1}{\mu  \epsilon_k} \int_{\R^d}  |D_m\Psi(m_k(t_k),x) |^2dm_k(t_k)(x)  \\
 &+ F(Du_k(t_k), D^2u_k(t_k), D\Delta u_k(t_k), m_k(t_k) ),
 \end{align*}
where we used the strict convexity of $H$ with respect to the $p$ variable as stated in Assumption \eqref{AHallinone}. On the one hand, using the estimates of Proposition \ref{PropositionEstimates} we have that $F(Du_k(t), D^2u_k(t), D\Delta u_k(t),m_k(t) )$ is bounded independently from $k$. On the other hand, using the regularity assumption \eqref{PsiC2} and up to taking a subsequence we can assume that 
$$ \lim_{k \rightarrow + \infty} \int_{\R^d} |D_m\Psi(m_k(t_k),x) |^2 dm_k(t_k)(x)  =  \int_{\R^d} |D_m\Psi(\tilde{m}(\tilde{t}),x) |^2 d\tilde{m}(\tilde{t})(x)$$
for some $\tilde{t} \in [0,T]$ such that $\Psi(\tilde{m}(\tilde{t})) =0$. This is where Assumption \eqref{TransCondPsi} comes into play. Since $\Psi(\tilde{m}(\tilde{t})) =0$, we have that 
$$ \int_{\R^d} |D_m\Psi(\tilde{m}(\tilde{t}),x) |^2 d\tilde{m}(\tilde{t})(x) >0,$$
and we deduce that, $\displaystyle \frac{d^2}{dt^2} \Psi(m_k(t))|_{t=t_k}  >0$ for $k$ large enough. This leads to a contradiction since $t_k$ is assumed to be a local maximum point of $t \rightarrow \Psi(m_k(t))$.
\end{proof}

Theorem \ref{MainTheoremStayInside} is a direct consequence of the above proposition.

\begin{proof}[Proof of Theorem \ref{MainTheoremStayInside}]
Denote by $\bar{\mathcal{U}}_{\epsilon,\delta}$ the value of Problem \eqref{PenalizedProblem} and by $\bar{\mathcal{U}}$ the value of the constrained problem \eqref{Problem}. We assume that $(\epsilon,\delta)$ belongs to $(0,\epsilon_0) \times (0, \delta_0)$ with $(\epsilon_0, \delta_0)$ the parameters from Proposition \ref{Insidetheconstraint}.

We have that $\bar{\mathcal{U}}_{\epsilon,\delta}= \bar{\mathcal{U}}$ and the minimizers for problems \eqref{PenalizedProblem} and \eqref{Problem} coincide. 

Indeed, it is straightforward that $\bar{\mathcal{U}}_{\epsilon,\delta} \leq \bar{\mathcal{U}}$. Now if $(m_1,\alpha_1)$ is a solution to Problem \eqref{PenalizedProblem}, by Proposition \ref{Insidetheconstraint}, $(m_1,\alpha_1)$ is admissible for Problem \eqref{Problem}. This means that $\bar{\mathcal{U}}_{\epsilon,\delta} = J_{\epsilon,\delta}(m_1, \alpha_1) = J(m_1, \alpha_1) \geq \bar{\mathcal{U}}$ and, therefore $\bar{\mathcal{U}}_{\epsilon,\delta}= \bar{\mathcal{U}}$ and $(m_1,\alpha_1)$ is a solution to \eqref{Problem}. Conversely, if $(m_2, \alpha_2)$ is a solution to \eqref{Problem} then $J_{\epsilon,\delta}(m_2, \alpha_2) = J(m_2,\alpha_2) = \bar{\mathcal{U}} = \bar{\mathcal{U}}_{\epsilon,\delta}$ and $(m_2, \alpha_2)$ is a solution to \eqref{PenalizedProblem}.

Looking carefully at the proof of Proposition \ref{Insidetheconstraint}, using Theorem \eqref{TheoremHJB14Septembre2022} with the estimates given by Proposition  \ref{PropositionEstimates} and Lemma \ref{NewControllabilitySeptembre2022} we see that the threshold $(\epsilon_0,\delta_0)$ depends on $m_0$ only through the value $\Psi(m_0)$.
\end{proof}

Now we are finally able to conclude the proof of Theorem \ref{MainTheorem2021}.

\begin{proof}[Proof of Theorem \ref{MainTheorem2021}]
We use Theorem \ref{MainTheoremStayInside} and the optimality conditions for the penalized problem: If $(m,\alpha)$ is any solution to Problem \eqref{Problem}, we can find $(\epsilon, \delta) \in (0,\epsilon_0) \times (0,\delta_0) $, $\lambda \in L^{\infty}([0,T])$, $\beta \geq 0$, $u \in \mathcal{C}([0,T], \mathcal{C}_b^n(\R^d))$ such that $\alpha(t,x) = -D_pH(x,Du(t,x))$ for all $(t,x) \in [0,T] \times \R^d$ and $(m, u , \lambda, \beta)$ satisfies the conditions of Theorem \ref{TheoremPen}. Taking $\nu(t) := \frac{\lambda(t)}{\epsilon}$ and $\eta:= \frac{\beta}{\delta}$ concludes the proof of the first part of the theorem. 

Now, if we suppose that $\mathcal{F}$ and $\mathcal{G}$ are convex in the measure variable we can proceed as in \cite{Daudin2020} Section 4.3 and easily show that the conditions are sufficient.
\end{proof}

\section{The general case}

\label{sec: The general case}

The goal of this section is to prove Theorem \ref{MainTheoremWithoutTransCondPsi} . We first need to extend the results of Theorem \ref{TheoremHJB14Septembre2022} to HJB equations with right hand-side of the form $\nu \psi_1 + \varphi_1$ where $\nu$ belongs to $\mathcal{M}^+([0,T])$ and $\psi_1, \varphi_1$ belong to $\mathcal{C}([0,T], E_n)$.

\subsection{The HJB equation}

\begin{definition}
Suppose that $n \geq 3$. Let $\psi_1,\varphi_1$ be in $C([0,T],E_n)$ and $\psi_2$ be in $E_{n+\alpha}$. Let also $\nu$ be in $\mathcal{M}^+([0,T])$. We say that $u \in L^1([0,T],E_n)$ is a solution to 
\begin{equation}
\left \{
\begin{array}{ll}
-\partial_t u + H(x,Du) - \Delta u = \nu(t) \psi_1+ \varphi_1, & \mbox{ in }[0,T] \times \R^d\\
u(T,x) = \psi_2, & \mbox{ in } \R^d,
\end{array}
\right.
\label{HJBmeasureargument19septembre2022}
\end{equation}
if, for almost all $t\in [0,T]$, for all $x \in \R^d$,
\begin{align}
\notag u(t,x) &= P_{T-t}\psi_2(x) + \int_0^T \mathds{1}_{(t,T]}(s) P_{s-t} \psi_1(s)(x) d \nu(s) + \int_t^T P_{s-t} \varphi_1(s)(x)ds \\
&- \int_t^T P_{s-t} \left[H(.,Du(s,.))\right](x)ds. 
\label{DefHJB19Mars2023}
\end{align}
\label{DefinitionHJB12Septembre2022MeasureArgument}
\end{definition}
We can remark that $u$ is a solution of  \eqref{HJBmeasureargument19septembre2022} if and only if $v:= u-z $ is a solution to 
\begin{equation}
\left \{
\begin{array}{ll}
-\partial_t v + H(x, Dv+Dz) - \Delta v = 0 &\mbox{ in } [0,T] \times \R^d, \\
v(T,x) =0 & \mbox{ in } \R^d.
\end{array}
\right.
\label{equationforv12fevrier2023}
\end{equation}
where 
\begin{equation}
z(t,x) := P_{T-t}\psi_2(x) + \int_0^T  \mathds{1}_{(t,T]}(s) P_{s-t}\psi_1(s)(x) d \nu(s) + \int_t^T P_{s-t} \varphi_1(s)(x)ds. 
\label{definitiondez19Mars2023}
\end{equation}
Proceeding exactly as in the proof of Theorem \ref{TheoremHJB14Septembre2022}, we find that there exists a unique solution $v \in L^{\infty}([0,T],E_n)$ to \eqref{equationforv12fevrier2023} and it satisfies 
$$ \essup_{t\in [0,T]} \|v(t) \|_n \leq C(\int_0^T \|z(t) \|_n dt ). $$
As a consequence we get the following well-posedness result for \eqref{HJBmeasureargument19septembre2022}.

\begin{theorem}
Suppose that $n \geq 3$. Let $\psi_1,\varphi_1$ be in $C([0,T],E_n)$ and $\psi_2$ be in $E_n$. Let also $\nu$ be in $\mathcal{M}^+([0,T])$. Under these conditions, there is a unique solution $ u \in L^{\infty}([0,T], E_n)$ to \eqref{HJBmeasureargument19septembre2022} in the sense of Definition \ref{DefinitionHJB12Septembre2022MeasureArgument}.
Moreover it satisfies 
$$\essup_{t \in [0,T]}  \| u(t) \|_n \leq C(|\nu|, \sup_{t\in [0,T]} \|\psi_1(t) \|_n, \sup_{t\in [0,T]} \|\varphi_1(t) \|_n, \|\psi_2 \|_n ) ,$$
where $|\nu|$ is the total variation norm of $\nu$.
\label{TheoremHJBMeasureArgument}
\end{theorem}
We will need the following stability result.

\begin{proposition} Assume that $(\nu_m)_{m\geq 1} \in L^{\infty}([0,T])$ converges in $\mathcal{M}^+([0,T])$ toward $\nu$. Let $u_m \in \mathcal{C}([0,T],E_n)$ be the solution to the HJB equation \eqref{HJBmeasureargument19septembre2022} with data $(\nu_m,\psi_1,\varphi_1,\psi_2)$ with $\psi_1, \varphi_1 \in \mathcal{C}([0,T],E_n)$ and $\psi_2 \in E_{n +\alpha}$. Then, for all $(t,x) \in [0,T] \times \R^d$ such that $\nu(\{ t \})=0$, it holds:
$$\lim_{m \rightarrow + \infty} u_m(t,x) = u(t,x), $$
$$ \lim_{m \rightarrow + \infty} Du_m(t,x) = Du(t,x), $$
where $u$ is the only element in its equivalence class of $L^{\infty}([0,T],E_n)$ satisfying \eqref{DefHJB19Mars2023} for all $(t,x) \in [0,T] \times \R^d$.
\label{StabilityGeneral19Septembre2022}
\end{proposition}

\begin{proof}
For all $m \geq 1$, we define $z_m$ according to \eqref{definitiondez19Mars2023} with $\nu$ replaced my $\nu_m$ and we let as well $v_m := u_m - z_m$. On the one hand, for all $m$, $v_m$ satisfies 
\begin{equation}
\left \{
\begin{array}{ll}
-\partial_t v_m  - \Delta v_m = -H(x,Du_m) &\mbox{ in } [0,T] \times \R^d, \\
v_m(T,x) =0 & \mbox{ in } \R^d,
\end{array}
\right.
\end{equation}
and therefore, by classical estimates for the heat equation, for all $\alpha \in (0,1/2)$,
$$ \|v_m \|_{\frac{1+\alpha}{2},1+\alpha} \leq C_1 \sup_{t \in [0,T]} \|H(.,Du_m(t,.)) \|_{1+\alpha} \leq C_2$$
for some $C_1>0$ and some $C=C(\sup_{t\in [0,T]} \|u_m(t) \|_{2+\alpha}) >0$. Using Theorem \ref{TheoremHJBMeasureArgument}, we find that the sequence $(v_m)_{m\geq 1}$ is bounded in $\mathcal{C}^{\frac{1+\alpha}{2},1+\alpha}$. Therefore we can find $\tilde{v} \in \mathcal{C}^{\frac{1+\alpha}{2},1+\alpha}$ such that $v_m|_{[0,T] \times B(0,R)}$ converges to $\tilde{v}|_{[0,T] \times B(0,R)}$ in $\mathcal{C}^{\frac{1+\beta}{2},1+\beta}([0,T] \times B(0,R))$ for all $R >0$ and some $\beta \in (0,\alpha)$.  On the other hand, using Portementeau theorem, we have that 
\begin{equation}
\lim_{m \rightarrow +\infty} z_m(t,x) = z(t,x), \hspace{30pt} \lim_{m\rightarrow +\infty} Dz_m(t,x) =Dz(t,x)
\label{Troptardpourunnom20mars2023}
\end{equation}
for all $(t,x) \in [0,T] \times \R^d$ such that $\nu( \{t \}) =0$. Since $\nu (\{t\}) \neq 0$ for at most a countable number of times $t \in [0,T]$, we can use Lebesgue dominated convergence theorem and pass to the limit, as $m\rightarrow +\infty$ in the expression  
$$ v_m(t,x) = - \int_t^T P_{s-t} \left [ H(.,Dv_m(s,.) + Dz_m(s,.)) \right](x) ds. $$
We conclude that, for all $(t,x) \in [0,T] \times \R^d$
$$ \tilde{v}(t,x) = -\int_t^T P_{s-t} \left [ H(.,D\tilde{v}(s,.) + Dz(s,.)) \right](x) ds. $$
If we let $\tilde{u}:= \tilde{v} + z$, we have that $\tilde{u}$ solves the HJB equation \eqref{HJBmeasureargument19septembre2022}  and, by uniqueness, $\tilde{u}  = u$ in $L^{\infty}([0,T],E_n)$. Therefore $v(t,x) = \tilde{v}(t,x)$ for all $(t,x) \in [0,T] \times \R^d$ and we conclude that $v_m|_{[0,T] \times B(0,R)}$ converges to $v|_{[0,T] \times B(0,R)}$ in $\mathcal{C}^{\frac{1+\beta}{2},1+\beta}([0,T] \times B(0,R))$ for all $R >0$, for some $\beta \in (0,\alpha)$. Together with \eqref{Troptardpourunnom20mars2023}, this is enough to conclude the proof of the proposition.
\end{proof}

\subsection{Optimality conditions in the general case}

We first prove a lemma similar to Lemma \ref{Linearizationlemma}.

\begin{lemma}
Let $(\tilde{m}, \tilde{\omega})$ be a relaxed solution, in the sense of \eqref{CandidateRelaxed},  to the constrained Problem \eqref{Problem}. Then there exist $\nu \in \mathcal{M}^+([0,T])$ and $\eta \in \R^+$ satisfying 

\begin{minipage}{0.48 \textwidth}
\begin{equation}
\Psi(\tilde{m}(t)) = 0, \nu-\mbox{ae}
\label{Exclusionnulemma}
\end{equation}
\end{minipage}
\begin{minipage}{0.48 \textwidth}
\begin{equation}
\eta \Psi(\tilde{m}(T))=0,
\label{Exclusionetalemma}
\end{equation}
\end{minipage}
and such that $(\tilde{m}, \tilde{\omega})$ minimizes
\begin{align}
\notag  J^l(\omega , m) &:= \int_0^T \int_{\R^d}  L\Bigl(x, \frac{d \omega}{dt \otimes dm(t)}(t,x) \Bigr) dm(t)(x)dt \\
 \notag &+ \int_0^T \int_{\R^d} \frac{\delta \Psi}{\delta m}(\tilde{m}(t), x)dm(t)(x)d\nu(t) + \int_0^T \int_{\R^d} \frac{\delta \mathcal{F}}{\delta m}( \tilde{m}(t),x) dm(t)(x)dt  \\
 &+ \int_{\R^d} \left[ \eta \frac{\delta \Psi}{\delta m}(\tilde{m}(T),x) +\frac{\delta \mathcal{G}}{\delta m}(\tilde{m}(T),x) \right] dm(T)(x),
 \label{LinearizedFunctionalGeneralSeptembre2022}
\end{align} 
over the pairs $(m,\omega)$ satisfying \eqref{CandidateRelaxed} and where we set, $J^l (m,\omega)= +\infty$ if $\omega$ is not absolutely continuous with respect to $dt \otimes m(t)$.
\label{LinearizationlemmaGeneralConstraint}
\end{lemma}

\begin{proof}
We take $\epsilon, \delta >0$ and $(m^{\epsilon,\delta},\omega^{\epsilon,\delta})$ solutions to the penalized problems \ref{PenalizedProblem}. As $\epsilon,\delta \rightarrow 0$, $ (m^{\epsilon,\delta},\omega^{\epsilon,\delta})$ converges, up to taking a sub-sequence, in $\mathcal{C}([0,T] , \mathcal{P}_{r}(\R^d)) \times \mathcal{M}([0,T] \times \R^d, \times \R^d)$ for $r \in (1,2)$ to a solution to the constrained problem that we can assume, without loss of generality, to be $(\tilde{m}, \tilde{\omega})$. Now $(m^{\epsilon,\delta},\omega^{\epsilon,\delta})$ is also a solution to the linearized problems of Lemma \ref{Linearizationlemma} for some $\lambda^{\epsilon, \delta} , \beta^{\epsilon, \delta} \in L^{\infty}([0,T] ) \times \R^+$ satisfying the exclusion conditions 

\begin{minipage}{0.48 \textwidth}
\begin{equation*}
\lambda^{\epsilon,\delta}(t)  \left \{ 
\begin{array}{ll} 
  =0  & \mbox{if    } \Psi(m^{\epsilon,\delta}(t)) < 0 \\
  \in [0,1] & \mbox{if    } \Psi(m^{\epsilon,\delta}(t)) = 0 \\
 = 1 & \mbox{if    } \Psi(m^{\epsilon,\delta}(t)) > 0, \\
\end{array}
\right.
\end{equation*}
\end{minipage}
\begin{minipage}{0.48 \textwidth}
\begin{equation*}
\beta^{\epsilon,\delta}  \left \{ 
\begin{array}{ll} 
 =0  & \mbox{if    } \Psi(m^{\epsilon,\delta}(T)) < 0 \\
 \in [0 , 1] & \mbox{if    } \Psi(m^{\epsilon,\delta}(T)) = 0 \\
 =1 & \mbox{if    } \Psi(m^{\epsilon,\delta}(T)) > 0. \\
\end{array}
\right.
\end{equation*}
\end{minipage}

Using the controllability lemma \ref{NewControllabilitySeptembre2022} and arguing as in Lemma \ref{LemmaL1Estimate} we can infer that $\frac{\lambda^{\epsilon,\delta}}{\epsilon}$ is bounded in $L^1([0,T])$ independently from $\epsilon,\delta >0$ and $\frac{\beta^{\epsilon,\delta}}{\delta}$ is also bounded in $\R^+$. Let us take $\nu \in \mathcal{M}^+([0,T])$ to be a limit point of $\frac{\lambda^{\epsilon,\delta}}{\epsilon}$ and $\eta$ a limit point of $\frac{\beta^{\epsilon,\delta}}{\delta}$. It is plain to check that $\Psi(\tilde{m}(t))=0$ for $\nu$-almost all $t\in [0,T]$ and $\eta \Psi(\tilde{m}(T)) =0$. Now we can argue as in the proof of Lemma \ref{Linearizationlemma}, passing to the limit in the linearized problems to conclude that $(\tilde{m} , \tilde{\omega})$ is indeed a minimum of \eqref{LinearizedFunctionalGeneralSeptembre2022}.
\end{proof}

We now take $u \in L^{\infty}([0,T],E_n)$ to be the solution, in the sense of Definition \eqref{DefinitionHJB12Septembre2022MeasureArgument} to 
\begin{equation}
\left \{
\begin{array}{ll}
\displaystyle -\partial_tu +H(x,Du)- \Delta u = \nu(t) \displaystyle \frac{\delta \Psi}{\delta m}(\tilde{m}(t),x) + \frac{\delta \mathcal{F}}{\delta m}(\tilde{m}(t),x) &\mbox{ in } [0,T] \times \R^d, \\
\displaystyle u(T,x) = \eta \frac{\delta \Psi}{\delta m}(\tilde{m}(T),x) + \frac{\delta \mathcal{G}}{\delta m}(\tilde{m}(T),x) & \mbox{ in } \R^d.
\end{array}
\right.
\label{HJBGeneral3Octobre}
\end{equation}

We also assume that $u$ is defined for all $(t,x) \in [0,T] \times \R^d$ (and not just $dt$-almost everywhere) by
\begin{align}
\notag u(t,x) &= \eta P_{T-t}\frac{\delta \Psi}{\delta m}(\tilde{m}(T))(x) + P_{T-t}\frac{\delta \mathcal{G}}{\delta m}(\tilde{m}(T))(x) + \int_0^T \mathds{1}_{(t,T]}(s) P_{s-t} \frac{\delta \Psi}{\delta m}(\tilde{m}(s))(x) d \nu(s) \\
& + \int_t^T P_{s-t} \frac{\delta \mathcal{F}}{\delta m}(\tilde{m}(s))(x)ds - \int_t^T P_{s-t} \left[H(.,Du(s,.))\right](x)ds. 
\label{HJBGeneral3OctobrePartout}
\end{align}

Using an approximation argument and Proposition \ref{StabilityGeneral19Septembre2022}, we have the following duality relation:

\begin{proposition}
Let $u \in L^{\infty}([0,T], E_n)$ be a solution to \eqref{HJBGeneral3Octobre} satisfying \eqref{HJBGeneral3OctobrePartout} for all $(t,x) \in [0,T] \times \R^d$.
Let also $(m,\alpha) \in \mathcal{C}([0,T], \mathcal{P}_2(\R^d)) \times L^2_{dt \otimes dm(t)}([0,T] \times \R^d, \R^d)$ be a solution in the sense of distributions to 
\begin{equation*}
\left \{
\begin{array}{ll}
\partial_t m +\mdiv(\alpha m ) - \Delta m = 0, & \mbox{ in }(0,T) \times \R^d, \\
m(0)=m_0.
\end{array}
\right.
\end{equation*}
Then the following duality formula holds for any $t_1 \in [0,T]$ such that $\nu(\{t_1 \})=0$,
\begin{align}
\notag \int_{\R^d} u(t_1,x)dm(t_1)(x) &= \eta \int_{\R^d} \frac{\delta \Psi}{\delta m}(\tilde{m}(T),x)dm(T)(x) + \int_{\R^d} \frac{\delta \mathcal{G}}{\delta m}(\tilde{m}(T),x)dm(T)(x) \\
\notag &- \int_{t_1}^T \int_{\R^d} \left[ H(x,Du(t,x)) + \alpha(t,x).Du(t,x) \right]dm(t)(x)dt \\
&+ \int_{t_1}^T \int_{\R^d} \frac{\delta \Psi}{\delta m}(\tilde{m}(t),x)dm(t)(x)d\nu(t) + \int_{t_1}^T\int_{\R^d}\frac{\delta \mathcal{F}}{\delta m}(\tilde{m}(t),x)dm(t)(x)dt.
\label{DualityRelation19Sept2022} 
\end{align}

\end{proposition}

We can conclude with the proof of Theorem  \ref{MainTheoremWithoutTransCondPsi}.

\begin{proof}[Proof of Theorem \ref{MainTheoremWithoutTransCondPsi}]
We proceed similarly to the proof of Theorem \ref{TheoremPen}. Take $(\tilde{m},\tilde{\omega})$ a relaxed solution to the constrained problem \ref{Problem}. Let also $u \in L^{\infty}([0,T],E_n)$ be the solution to \eqref{HJBGeneral3Octobre} satisfying \eqref{HJBGeneral3OctobrePartout} with $\nu$ and $\eta$ satisfying respectively \eqref{Exclusionnulemma} and \eqref{Exclusionetalemma}.

Recall that the linearized cost $J^l$ is defined in Lemma \ref{LinearizationlemmaGeneralConstraint}. On the one hand, by definition of $L$, it holds that 
\begin{align*}
J^{l}(\tilde{m}, \tilde{\omega}) &= \int_0^T \int_{\R^d} L \Bigl(x,\frac{d \tilde{\omega}}{dt \otimes d\tilde{m}(t)}(t,x) \Bigr) d\tilde{m}(t)(x)dt \\
& \geq -\int_0^T \int_{\R^d} \left[ \frac{d \tilde{\omega}}{dt \otimes d\tilde{m}(t)}(t,x).D u(t,x) + H(x,Du(t,x)) \right] d\tilde{m}(t)(x)dt
\end{align*}
with equality if and only if
\begin{equation}
 \frac{d \tilde{\omega}}{dt \otimes d\tilde{m}(t)} = -D_pH(x,Du), dt\otimes d\tilde{m}(t)-ae. 
\label{JeSaisPasTrop10Sept2022}
\end{equation}
Being $\Psi(m_0) <0$, it holds that $\nu (\{0 \})=0$ because of the exclusion condition \eqref{Exclusionnulemma} and we can use the duality relation \eqref{DualityRelation19Sept2022} with $t_1=0$ and $\displaystyle \alpha = \frac{d \tilde{\omega}}{dt \otimes d \tilde{m}(t)}$ to conclude that 
$$J^l(\tilde{m},\tilde{\omega}) \geq \int_{\R^d} u(0,x)dm_0(x). $$
On the other hand, we can apply relation \eqref{DualityRelation19Sept2022} to the candidate $(m',-D_pH(x,Du(t,x))m')$ where $m'$ is solution to 
\begin{equation*}
\left \{
\begin{array}{ll}
\partial_tm' - \mdiv(D_pH(x,Du(t,x))m') -\Delta m' = 0, & \mbox{ in } (0,T) \times \R^d \\
m'(0)=m_0.
\end{array}
\right.
\end{equation*}
We get $\displaystyle J^l(m', -D_pH(x,Du(t,x))m') = \int_{\R^d} u(0,x)dm_0(x)$ and we can conclude that  the infimum of the linearized problem is indeed $\displaystyle \int_{\R^d} u(0,x)dm_0(x)$, it is achieved at $(\tilde{m}, \tilde{\omega})$ and \eqref{JeSaisPasTrop10Sept2022} holds true. Collecting the equations satisfied by $u$ and $\tilde{m}$, relation \eqref{JeSaisPasTrop10Sept2022} as well as the exclusion conditions of Lemma \ref{LinearizationlemmaGeneralConstraint}, we get the optimality conditions for the constrained problem. Differentiating in space the equation satisfied by $u$ we find that optimal control belong to $BV_{loc}([0,T] \times \R^d, \R^d) \bigcap L^{\infty}([0,T],\mathcal{C}_b^{n-1}(\R^d,\R^d))$.
\end{proof}

\appendix

\section{Appendix}

\label{sec: Appendix Chapter 2}

\subsection{Existence of relaxed solutions}

\label{Existencerelaxedsolutions}

\begin{proof}[Proof of Proposition \ref{EstimationsbaseFPE18Sept}]

Consider a weak solution of
\begin{equation*}
\left \{
\begin{array}{ll}
dX_t = \alpha(t,X_t)dt + \sqrt{2}dB_t, \\
X_{t=0} = X_0 \sim m_0
\end{array}
\right.
\end{equation*}
such that $\mathcal{L}(X_t) = m(t)$, $\forall t \in [0,T]$. The existence of such a solution is guaranteed by the fact that $(\alpha, m)$ solves the Fokker-Planck equation (see \cite{Trevisan2016} and also Proposition 3.1 in \cite{Daudin2020}). Using Jensen inequality, we get for $t,s \in [0,T]$ with $s<t$ 
\begin{align}
\notag \mathbb{E}( |X_t-X_s|^2 ) & \leq 2 \E \left[ \left| \int_s^t \alpha(u,X_u) du\right|^2 \right] + 4 \E \left[ \left| B_t-B_s \right|^2 \right] \\
\notag & \leq 2 (t-s)^2 \E \left[ \int_s^t \left|  \alpha(u,X_u) \right|^2 \frac{du}{t-s} \right] + 4(t-s) \\
\notag & \leq 2(t-s) \int_0^T \int_{\R^d} |\alpha(t,x)|^2dm(t)(x)dt + 4(t-s)
\end{align}
and therefore 
$$d_2(m(s),m(t)) \leq C\sqrt{t-s}$$ 
for some $\displaystyle C=C(\int_0^T\int_{\R^d} |\alpha(t,x)|^2dm(t)(x)dt)>0$ since $d_2(m(s),m(t)) \leq \mathbb{E}( |X_t-X_s|^2 )^{1/2}$. Taking $s=0$ in the above computation also shows that 
\begin{equation*}
\int_{\R^d} |x|^2dm(t) \leq 2 \mathbb{E}( |X_t-X_s|^2 ) + 2 \int_{\R^d} |x|^2dm_0(x) \leq C
\end{equation*}
for another $\displaystyle C=C(\int_{\R^d}|x|^2dm_0(x), \int_0^T\int_{\R^d} |\alpha(t,x)|^2dm(t)(x)dt)>0$.
\end{proof}

\begin{proof}[Proof of Proposition \ref{CompactnessofcurvesFPE18Sept2022}]

We set $\omega^n = \alpha^nm^n$. By Cauchy-Schwarz inequality we find that the total variation $|\omega^n|$ of $\omega^n$ is uniformly bounded. Indeed we have
\begin{align*}
|\omega^n| &= \int_0^T \int_{\R^d} \left| \frac{d \omega^n}{dt \otimes dm^n(t)}(t,x) \right| dm^n(t)(x)dt \\
&\leq \sqrt{T} \left( \int_0^T \int_{\R^d} \left| \frac{d \omega^n}{dt \otimes dm^n(t)}(t,x) \right|^2 dm^n(t)(x)dt \right)^{1/2}. 
\end{align*}
This estimate together with Proposition \ref{EstimationsbaseFPE18Sept} allow us to use Banach-Alaoglu theorem on the one hand and Ascoli theorem on the other hand and deduce that for all $r \in (1,2)$, up to a subsequence, $(m^n, \omega^n)_{n \in \mathbb{N}}$ converges in $\mathcal{C}([0,T], \mathcal{P}^r(\R^d)) \times \mathcal{M}([0,T] \times \R^d,\R^d)$ to some element $(\tilde{m}, \tilde{\omega})$ of $\mathcal{C}([0,T], \mathcal{P}^r(\R^d)) \times \mathcal{M}([0,T] \times \R^d,\R^d)$. It is straightforward that $\tilde{m}(0)=m_0$ and the fact that $(\tilde{m}, \tilde{\omega})$ satisfies the Fokker-Planck equation is a consequence of the weak-$*$ convergence of measures. Using Theorem 2.34 of \cite{Ambrosio2000} (see also Exemple 2.36) in \cite{Ambrosio2000}) we find that $\omega$ is absolutely continuous with respect to $m(t)\otimes dt$ and 
$$ \int_0^T \int_{\R^d}\left| \frac{d\omega}{dt \otimes dm(t)}(t,x) \right|^2  dm(t)(x)dt \leq \liminf_{n \rightarrow +\infty}  \int_0^T \int_{\R^d} |\alpha_n(t,x)|^2dm^n(t)(x)dt. $$
By Proposition \ref{EstimationsbaseFPE18Sept} again, this shows that $m$ belongs to $\mathcal{C}^{1/2}([0,T] , \mathcal{P}_2(\R^d))$.
\end{proof}

Now we give the proof of Lemma \ref{Existenceofweaksolutions2021}.

\begin{proof}[Proof of Lemma \ref{Existenceofweaksolutions2021}]
The result follows from Proposition \ref{EstimationsbaseFPE18Sept} and Proposition \ref{CompactnessofcurvesFPE18Sept2022}. We consider a minimizing sequence $(m^n,\omega^n)$ satisfying \eqref{CandidateRelaxed} and such that, for all $n \in \mathbb{N}$, $J_{\epsilon,\delta}(m^n,\omega^n) \leq  \inf J_{\epsilon,\delta}(m^n,\omega^n) +  1$. By coercivity of $H$ and therefore -by taking convex conjugates- of $L$ we find that there is $C_1>0$ such that, for all $n \in \mathbb{N}$,

\begin{equation}
\int_{\R^d} \int_0^T \left| \frac{d \omega^n}{dt \otimes dm^n(t)} (t,x)\right|^2dm^n(t)(x)dt \leq C_1.
\label{EstimateExistence}
\end{equation}
Using that $(m^n,\omega^n)$ satisfies the Fokker-Planck equation and $m_0$ belongs to $\mathcal{P}_2(\R^d)$ we deduce from Proposition \eqref{CompactnessofcurvesFPE18Sept2022} that, for all $r \in (1,2)$, up to a subsequence, $(m^n, \omega^n)_{n \in \mathbb{N}}$ converges in $\mathcal{C}([0,T], \mathcal{P}^r(\R^d)) \times \mathcal{M}([0,T] \times \R^d,\R^d)$ to some element $(\tilde{m}, \tilde{\omega})$ of $\mathcal{C}([0,T], \mathcal{P}^2(\R^d)) \times \mathcal{M}([0,T] \times \R^d,\R^d)$ which satisfies the Fokker-Planck equation with initial position $m(0)=m_0$. To conclude we use Theorem 2.34 of \cite{Ambrosio2000} to prove that 
$$J_{\epsilon, \delta}(\tilde{m},\tilde{\omega}) \leq \liminf_{n \rightarrow + \infty} J_{\epsilon, \delta}(m_n, \omega_n).$$ 
Therefore $(\tilde{m}, \tilde{\omega})$ is indeed a minimum of $J_{\epsilon, \delta}$.
\end{proof}

\subsection{Technical Results about the HJB equation}

\label{sec: Technical Results about the HJB equation}

We start with a (slightly unusual) version of Grönwall lemma. 

\begin{lemma}
Assume that $l : [0,T] \rightarrow \R^+$ is a bounded measurable map which satisfies, for some  $C_1,C_2>0$ and 
\begin{equation} 
l(t) \leq C_1 + C_2 \int_t^T \frac{l(s)}{\sqrt{s-t}}ds.
\label{GronwallBizarre}
\end{equation}
Then, for almost all $t \in [0,T]$,
$$ l(t) \leq C_1(1+C_2\sqrt{\pi}\sqrt{T-t})e^{C_2^2\pi(T-t)}.$$
\label{SlightlyMoreGeneralGronwallLemma}
\end{lemma} 

\begin{proof}
Arguing by induction, using \eqref{GronwallBizarre} we find that, for all $t \in [0,T]$ and all $n \in \mathbb{N}^*$, it holds
$$ l(t) \leq C_1 \left( 1+ \sum_{k=1}^n C_2^k I_k(t) \right) + \|l \|_{\infty} C_2^{n+1} I_{n+1}(t) $$
where $I_k : [0,T] \rightarrow \R$ is defined for all $k \in \mathbb{N}^*$ by
\begin{equation*}
I_k(t) = \int_t^T \int_{t_1}^T \dots \int_{t_{k-1}}^T \frac{1}{\sqrt{t_1-t} \dots \sqrt{t_k-t_{k-1}} } dt_1 \dots t_k.
\end{equation*} 
Once we have found by induction that, for all $k \geq 1$ and $t \geq 0$, $\displaystyle I_k(t) = \frac{\pi^{k/2}}{\Gamma(k/2+1)} (T-t)^{k/2},$
where $\Gamma$ is Euler's Gamma function, we conclude by elementary computations. 
\end{proof}
%
%
%
%
%
\begin{lemma} 
Assume that $u \in \mathcal{C}([0,T],E_n)$ is a solution to the HJB equation \eqref{HJB12septembre2022} with $f \in \mathcal{C}([0,T],E_n)$ and $g \in E_n$. Then 
$$ \sup_{(t,x) \in [0,T] \times \R^d} |Du(t,x) | \leq C( \int_0^T \|f(t) \|_1dt, \|g \|_1 ). $$
\label{LipschitzEstimateLinfinityBernstein21Sept2022}
\end{lemma}
\begin{proof}
We use the classical Bernstein method. Let $\mu >0$ and $w(t,x):= \frac{1}{2} e^{\mu t} |D u(t,x) |^2 $. Being $f$ in $\mathcal{C}([0,T],E_n)$, $u$ is smooth in space and satisfies the HJB equation in the strong sense. Differentiating the equation with respect to $x$ and taking the scalar product with $e^{\mu t}Du(t,x)$ gives
\begin{align*}
& -\partial_t w(t,x) + Dw(t,x).D_pH(x,Du(t,x)) - \Delta w(t,x)  \\
 &= -\mu w(t,x) - D_xH(x,Du(t,x)).e^{\mu t}Du(t,x) + Df(t,x).e^{\mu t} Du(t,x) - e^{\mu t} |D^2u(t,x)|^2. 
 \end{align*}
Now, by assumption on $H$, $|D_xH(x,Du(t,x)) | \leq C_0(1+ |Du(t,x)|)$ and therefore, for $\mu = 2C_0$,
\begin{align*}
-\partial_t w(t,x) &+ Dw(t,x).D_pH(x,Du(t,x)) - \Delta w(t,x)  \leq C_0e^{\mu t} |Du(t,x)| +  Df(t,x).e^{\mu t} Du(t,x) \\
&\leq \sqrt{2}e^{C_0 T}\left(C_0 +  \|f(t) \|_1 \right) \sup_{(s,y) \in [0,T] \times \R^d} \sqrt{w(s,y)}. 
\end{align*}
By comparison between $w$ and the obvious super-solution 
$$(t,x) \mapsto \frac{1}{2}e^{2C_0 T} \|g \|^2_1 + \sqrt{2}e^{C_0 T} \sup_{(s,y) \in [0,T] \times \R^d} \sqrt{w(s,y)} \int_t^T\left(C_0+ \|f(s) \|_1 \right) ds $$
 we deduce that, for all $(t,x) \in [0,T] \times \R^d$,
 $$ w(t,x) \leq C(1+ \sup_{(s,y) \in [0,T] \times \R^d} \sqrt{w(s,y)} )$$
 for some $\displaystyle C=C(\int_0^T \|f(t) \|_1 dt, \|g\|_1) >0$.
 And therefore, $\sup_{(t,x) \in [0,T] \times \R^d} |Dw(t,x) | \leq C$ for another constant $\displaystyle C= C(\int_0^T \|f(t) \|_1 dt, \|g\|_1) >0$. 
\end{proof}
\color{black}
\begin{lemma}
Assume that $u \in \mathcal{C}([0,T],E_n)$ is a solution to the HJB equation with data $f \in L^1([0,T],E_n)$ and $g \in E_n$ and assume that $u$ satisfies the estimate of the previous lemma then
$$ \sup_{t\in [0,T]} \|u(t) \|_n \leq C( \int_0^T \|f(t)\|_n, \|g \|_n). $$
\label{LemmaHigherOrderEtimates3Octobre2022}
\end{lemma}
\begin{proof}
For all $(t,x) \in [0,T] \times \R^d$, it holds that 
\begin{align*}
|u(t,x) | &\leq |P_{T-t}g(x) | + \int_t^T |P_{s-t}f(s)(x)|ds  + \int_t^T |P_{s-t}\left[H(.,Du(s,.))\right](x)|ds \\
&\leq 2\sqrt{T} \|g \|_0(1+|x|) +2\sqrt{T} \int_t^T \|f(s) \|_0(1+|x|)ds + C(1 + \sup_{(t,x) \in [0,T] \times \R^d} |Du(s,x)| )
\end{align*}
for some $C = C( \sup_{(t,x) \in [0,T] \times \R^d)} |Du(t,x) |) >0$. Above we use the fact that $ \sup_{x \in \R^d} |P_t g(x) | \leq \sup_{x \in \R^d} |g(x)| $ for a bounded function $g$ and $ \sup_{x \in \R^d} \frac{|P_t g(x) |}{1+|x|} \leq 2\sqrt{T} \sup_{x \in \R^d} \frac{|g(x)|}{1+|x|} $ for a function $g$ with linear growth. Since $u$ is assumed to satisfy the Lipschitz estimate  of the previous lemma \ref{LipschitzEstimateLinfinityBernstein21Sept2022}, it holds that
$$ \sup_{t \in [0,T] } \|u (t)\|_1 \leq C( \int_0^T \|f(t) \|_1 dt, \|g(t) \|_1).$$
Now we proceed with higher order derivatives and we argue by induction. Take $k \geq 2$ and assume that we have shown that 
$$ \sup_{t \in [0,T] } \|u (t)\|_{k-1} \leq C( \int_0^T \|f(t) \|_{k-1} dt, \|g(t) \|_{k-1}).$$
Using the inequality $\sup_{ x \in \R^d} |DP_tg (x) | \leq \frac{C}{\sqrt{t}} \sup_{x \in \R^d} |g(x)|$ we get
\begin{align*}
|D^ku(t,x) | &\leq |P_{T-t}D^kg(x) | + \int_t^T |P_{s-t}D^kf(s)(x)|ds  + \int_t^T |DP_{s-t}D^{k-1}\left[H(.,Du(s,.))\right](x)|ds \\
&\leq  \|g \|_k + \int_t^T \|f(s) \|_kds + C\int_t^T \frac{ \sup_{x \in \R^d} |D^{k-1}\left[ H(x,Du(s,x)) \right] |}{\sqrt{s-t}}ds .
\end{align*}
But we can find a constant $C= C(\sup_{t \in [0,T]} \|u(t) \|_{k-1})$ such that 
$$ \sup_{x \in \R^d} |D^{k-1} H(x,Du(s,x)) | \leq C(1 + \sup_{x \in \R^d} |D^ku(s,x) |)$$
and therefore, by Grönwall's lemma \ref{SlightlyMoreGeneralGronwallLemma},
$$ \sup_{(t,x) \in [0,T] \times \R^d} |D^ku(t,x) | \leq C( \|g\|_k, \int_0^T \|f(t) \|_kdt , \sup_{t\in [0,T]} \|u(t) \|_{k-1} )$$ 
and we conclude by induction.
\end{proof}

Following similar computations we can prove the following stability result.

\begin{lemma}
Take $f_1,f_2 \in L_1([0,T],E_n)$ and $g_1,g_2 \in E_n$. Suppose that, $u_1,u_2 \in \mathcal{C}([0,T],E_n)$ are solutions to the HJB equation \eqref{HJB12septembre2022} with data $(f_1,g_1),(f_2,g_2)$ respectively and satisfy the estimate of Lemma \ref{LemmaHigherOrderEtimates3Octobre2022}. Then 
$$ \sup_{t \in [0,T]} \|u_1(t) -u_2(t) \|_n \leq C \Bigl(\int_0^T \|f_1(t)-f_2(t) \|_ndt + \|g_1 - g_2 \|_n \Bigr). $$
for some $\displaystyle C=C\bigl(\int_0^T \|f_1(t) \|_ndt, \int_0^T \|f_2(t) \|_ndt, \|g_1\|_n, \|g_2\|_n\bigr) >0$.
\label{lem:stability15Mai2023}
\end{lemma}

\begin{proof}
For all $(s,x) \in [0,T] \times \R^d$ we can write
\begin{align*}
H(x,Du_1(s,x)) &- H(x,Du_2(s,x)) \\
&= (Du_1(s,x)-Du_2(s,x)) . \int_0^1 D_pH(x,rDu_1(s,x) +(1-r)Du_2(s,x))dr 
\end{align*}
and deduce that, for all $k \geq 1$,
$$ \sup_{x \in \R^d} |D^{k-1}\left[H(x,Du_1(s,x)) - H(x,Du_2(s,x)) \right] | \leq C \|u_1(s) - u_2(s) \|_k 
$$
for some $C = C( \|u_1(s) \|_k, \|u_2(s) \|_k)>0$. The proof of the lemma follows from this observation and the same computations as the proof of Lemma \ref{LemmaHigherOrderEtimates3Octobre2022}.
\end{proof}

\begin{lemma}
Assume that $u \in L^{\infty}([0,T],E_n)$ solves the HJB equation with data $(f,g) \in \mathcal{C}([0,T],E_n) \times E_{n+\alpha}$ then $u$ belongs to $\mathcal{C}([0,T],E_n)$.
\end{lemma}

\begin{proof}
Let us take $k \in \llbracket 1,n \rrbracket $. We fix $h >0$. For $t \in [0,T-h]$ it holds
\begin{align*}
D^ku(t+h,x) &- D^ku(t,x) = P_{T-t-h}D^kg(x) - P_{T-t}D^kg(x) \\
&+ \int_{t+h}^TP_{s-t-h}D^kf(s)(x)ds - \int_t^T P_{s-t}D^kf(s)(x)ds \\
& + \int_{t+h}^T DP_{s-t-h}D^{k-1}H(.,Du(s,.))(x) ds
- \int_t^TDP_{s-t}D^{k-1}H(.,Du(s,.))(x)ds \\
&= \Delta_1 + \Delta_2 + \Delta_3. 
\end{align*}
We estimate the three differences as follows:
$$|\Delta_1| =  | P_{T-t-h}D^kg(x) - P_{T-t}D^kg(x) | \leq |D^kg(x) - P_hD^kg(x) | \leq h^{\alpha/2} ||g ||_{k+\alpha}. $$
Now for the term involving $f$:
\begin{align*}
|\Delta_2 | &= |\int_{t+h}^TP_{s-t-h}D^kf(s)(x)ds - \int_t^T P_{s-t}D^kf(s)(x)ds| \\
&= | \int_t^{T-h} P_{s-t}D^kf(s+h)(x)ds - \int_t^T P_{s-t}D^kf(s)(x)ds | \\
&= |\int_t^{T-h} P_{s-t}(D^kf(s+h)-D^kf(s))(x)ds - \int_{T-h}^T P_{s-t}D^kf(s)(x)ds | \\
&\leq \int_0^{T-h} \|f(s+h) - f(s) \|_k ds + C\sqrt{h} \sup_{t\in [0,T]} \|f(t) \|_{k-1}. 
\end{align*}
Finally for the term involving the Hamiltonian
\begin{align*}
|\Delta_3| &= |\int_{t+h}^T DP_{s-t-h}  D^{k-1}H(.,Du(s,.))(x) ds
- \int_t^TDP_{s-t}D^{k-1}H(.,Du(s,.))(x)ds| \\
&=| \int_t^{T-h} DP_{s-t}D^{k-1}\left[ H(.,Du(s+h,.) - H(.,Du(s,.)) \right](x)ds \\
&- \int_{T-h}^T DP_{s-t}D^{k-1}\left[H(.,Du(s,.))\right](x)ds | \\
&\leq C(\essup_{t\in [0,T]} \|u(t) \|_k) \int_t^{T-h} \frac{\sup_{x \in \R^d} |D^ku(s+h,x)-D^ku(s,x) |}{\sqrt{s-t}}ds \\
&+ C(\essup_{t\in [0,T]} \|u(t) \|_k) \sqrt{h} \\
& \leq C(\essup_{t\in [0,T]} \|u(t) \|_k )(\sqrt{h} + \int_t^{T-h} \frac{\|u(s+h) - u(s) \|_k}{\sqrt{s-t}} ds ).
\end{align*} 
Using again Grönwall Lemma \ref{SlightlyMoreGeneralGronwallLemma}, we get, for all $t \in [0,T]$,
\begin{align*}
 \|u(t+h) - u(t) \|_n & \leq C( \essup_{t\in [0,T]} \|u(t) \|_n ) ( h^{\alpha/2} \|g \|_{n+\alpha} + \int_0^{T-h} \|f(s+h) - f(s) \|_n ds \\
 &+ \sqrt{h} \sup_{t\in [0,T]} \|f(t) \|_{n-1} ).
 \end{align*}
 Being $f$ in $\mathcal{C}([0,T],E_n)$, the right-hand side converges to $0$ when $h$ goes to $0$ and therefore
 $$\lim_{h \rightarrow 0} \sup_{t\in [0,T-h]} \|u(t+h) - u(t) \|_n =0$$
 which concludes that $u$ belongs to $\mathcal{C}([0,T],E_n)$.
\end{proof}

As a consequence, we get the existence of solutions from the classical case.

\begin{proposition}
Take $f \in L^1([0,T] , E_n)$ and $g \in E_{n+\alpha}$. Then there exists a unique solution in $u \in \mathcal{C}([0,T],E_n)$ to the HJB equation with data $(f,g)$ and it satisfies the estimate of  Lemma \ref{LemmaHigherOrderEtimates3Octobre2022}.
\label{Proposition15Mai2023}
\end{proposition}

\begin{proof}[Proof of Proposition \ref{Proposition15Mai2023}]
We take a sequence of smooth functions $f_m:[0,T] \times \R^d \rightarrow \R$ and $g_m:\R^d \rightarrow \R$ converging respectively to $f$ in $L^1([0,T],E_n)$ and to $g$ in $E_{n+\alpha}$. For each $m$, the existence of a strong solution $u_m \in \mathcal{C}([0,T],E_n)$ follows from Schauder theory and our a priori Lipschitz estimate. Thanks to the previous lemma, we know that $u_m$ is a Cauchy sequence in $L^{\infty}([0,T],E_n)$ and therefore it converges in this space to some $u$. The subspace $\mathcal{C}([0,T],E_n)$ being closed in $L^{\infty}([0,T],E_n)$ we have that $u$ belongs to $\mathcal{C}([0,T],E_n)$. We can also pass to the limit in the equation
$$ u_m(t,x) = P_{T-t}g_m(x) + \int_t^T P_{s-t} f_m(s)(x)ds - \int_t^T P_{t-s} \left[ H(.,Du_m(s,.)) \right](x)ds $$
to conclude that $u$ is a solution.

The uniqueness of solutions is a straightforward consequence of the stability estimate of  Lemma \ref{lem:stability15Mai2023}.
\end{proof}

We are finally ready to prove Theorem \ref{TheoremHJB14Septembre2022}.

\begin{proof}[Proof of Theorem \ref{TheoremHJB14Septembre2022}]
Combining Proposition \ref{Proposition15Mai2023} and Lemma \ref{lem:stability15Mai2023} we get Theorem \ref{TheoremHJB14Septembre2022}.
\end{proof}

\paragraph{Acknowledgment} The author wishes to thank Pierre Cardaliaguet for suggesting the problem and for fruitful discussions during the preparation of this work. 

\bibliographystyle{plain}
\bibliography{/Users/sam/Documents/Bibtex/JMPA2022Daudin.bib}

\end{document}